\newtheorem{theorem}{Theorem}
\newtheorem{lemma}[theorem]{Lemma}
\newtheorem{corollary}[theorem]{Corollary}
\newtheorem{proposition}[theorem]{Proposition}
\theoremstyle{remark} \newtheorem*{remark}{Remark}
\theoremstyle{remark} 
\theoremstyle{definition} 
\numberwithin{equation}{section}
\numberwithin{theorem}{section}
\newcommand{\Q}{\mathbb{Q}}
\newcommand{\Res}{\mathop{\mathrm{Res}}}
\newcommand{\N}{\mathrm{N}}
\newcommand{\kp}{\mathfrak{p}}
\renewcommand{\epsilon}{\varepsilon}
\renewcommand{\leq}{\leqslant}
\renewcommand{\geq}{\geqslant}
\title{Effective Brauer--Siegel theorems for  Artin $L$-functions}
\author{Peter Jaehyun Cho}
\address{Department of Mathematical Sciences, Ulsan National Institute of Science and\newline
	\indent Technology, UNIST-gil 50, Ulsan 44919, Korea}
\email{petercho@unist.ac.kr}
\author{Robert J. Lemke Oliver}
\address{Department of Mathematics, University of Wisconsin-Madison, Madison, WI 53706, USA\ \indent Department of Mathematics, Tufts University, Medford, MA 02155, USA}
\email{lemkeoliver@wisc.edu}
\author{Asif Zaman}
\address{Department of Mathematics, University of Toronto, Toronto, ON M5S 2E4, CANADA}
\email{asif.zaman@utoronto.ca}
\begin{document}

	\maketitle

\begin{abstract}
	Given a number field $K \neq \Q$, in a now classic work, Stark pinpointed the possible source of a so-called Landau--Siegel zero of the Dedekind zeta function $\zeta_K(s)$ and used this to give effective upper and lower bounds on the residue of $\zeta_K(s)$ at $s=1$.
	We extend Stark's work to give effective upper and lower bounds for the leading term of the Laurent expansion of general Artin $L$-functions at $s=1$ that are, up to the value of implied constants, as strong as could reasonably be expected given current progress toward the generalized Riemann hypothesis.  Our bounds are completely unconditional, and rely on no unproven hypotheses about Artin $L$-functions.
\end{abstract}

%
%
%
%
%
%

\section{Introduction} \label{sec:intro}
For any number field $K \neq \Q$ with $D_K = |\mathrm{Disc}(K)|$ and any $\epsilon > 0$, it is classical knowledge that the residue of its Dedekind zeta function $\zeta_K(s)$ at $s=1$ satisfies 
\[
D_K^{-\epsilon} \ll_{[K:\Q],\epsilon} \Res_{s=1} \zeta_K(s) \ll_{[K:\Q]} (\log D_K)^{[K:\Q]-1}. 
\]
The upper bound is due to Landau \cite{Landau} and the implied constant is effectively computable. The lower bound is famously known as the Brauer--Siegel theorem \cite{Brauer1947,Siegel1935}, but unfortunately the proof produces an ineffective implied constant for arbitrary fields $K$ and $\epsilon > 0$. This defect was a serious problem in many applications until, in a breakthrough 1974 paper, Stark \cite{Stark} made three fundamental contributions. 

First, for any number field $K \neq \Q$, Stark showed that the Dedekind zeta function $\zeta_K(s)$ has at most one zero  in the region
\[
\Re(s) > 1 - \frac{1}{4 \log D_K}, \quad |\Im(s)| < \frac{1}{4 \log D_K},
\]
and, if this zero $\beta_K$ exists, then $\beta_K$ is real and simple.  We refer to $\beta_K$, if it exists, as the \textit{exceptional zero} (or the \textit{Landau--Siegel zero}) of $K$. Conjecturally, $\beta_K$ does not exist. Second, building on   Heilbronn \cite{Heilbronn}, Stark showed that if additionally $K/k$ is a normal extension with Galois group $G = \mathrm{Gal}(K/k)$, then there exists a unique irreducible character $\psi_{K/k} \in \mathrm{Irr}(G)$ such that $\psi_{K/k}^2 = \mathbf{1}_G$, the trivial character of $G$, and $\beta_K$ is a real simple zero of the (1-dimensional) Artin $L$-function $L(s,\psi_{K/k})$. We refer to $\psi_{K/k}$, if it exists,  as the \textit{exceptional character} of   $K/k$. Third, using effective cases of the Brauer--Siegel theorem, Stark provided an effective lower bound on $1-\beta_K$ and hence on the residue of $\zeta_K(s)$ at $s=1$, proving that  
\[
\Res_{s=1} \zeta_K(s) \gg_{[K:\Q]} D_K^{-1/[K:\Q]}
\]
for any number field $K \neq \Q$ with an effective implied constant.  
	
In this paper, we extend Stark's work to all Artin $L$-functions for any Galois extension $K/k$ of number fields with Galois group $G$. All implied constants will be effectively computable.  Indeed, for any character $\chi$ of $G$, we will provide effective upper and lower bounds for the leading term in the Laurent expansion of Artin $L$-functions $L(s,\chi)$ at $s=1$. More precisely, it is known by classical work of Artin and Hecke that the function $(s-1)^{\langle \chi, \mathbf{1}_G \rangle} L(s,\chi)$ is holomorphic and non-zero at $s=1$, where $\langle \, \cdot \, , \, \cdot \, \rangle$ is the inner product on $G$. Our goal is to estimate the non-zero complex number  
\begin{equation} \label{eqn:kappa-def}
	\kappa(\chi) := \lim_{s \to 1} (s-1)^{\langle \chi, \mathbf{1}_G \rangle} L(s,\chi)		
\end{equation}
in terms of standard invariants, such as the degree $\chi(1)$, the analytic conductor $q(\chi)$  of the Artin $L$-function $L(s,\chi)$, the absolute value of the discriminant $D_K$,  and 
	\begin{equation}
		\label{eqn:mu-def}
		\mu(\chi)
			:= \min\{ \Re(\chi(g)) : g \in G\}. 
	\end{equation}
Note $-\chi(1) \leq \mu(\chi) \leq \chi(1)$ always and, if $\langle \chi, \mathbf{1}_G \rangle = 0$, then    $\mu(\chi) < 0$ and $\kappa(\chi) = L(1,\chi)$. Some quantities will  be stated in terms of the induction\footnote{More precisely, $\widetilde{\chi}$ is the induction to $\Q$ of the pullback of $\chi$ via the quotient map $\mathrm{Gal}(\widetilde{K}/k) \to \mathrm{Gal}(K/k)$.} of $\chi$ to the Galois closure $\widetilde{K}$ of $K$ over $\mathbb{Q}$, denoted by the character $\widetilde{\chi}$ of $\widetilde{G} := \mathrm{Gal}(\widetilde{K}/\Q)$.  

 Our first main result estimates $\kappa(\chi)$ in terms of $D_K$ and the exceptional character $\psi_{K/k}$. 

\begin{theorem} \label{thm:disc}
	Let $K/k$ be a Galois extension of number fields with Galois group $G$, and let $\psi_{K/k}$ be the exceptional character, if it exists. For any character $\chi$ of $G$, we have that
	\[
	|\kappa(\chi)| \ll_{[k:\Q],|G|,\chi(1)} (\log D_K)^{\widetilde{\chi}(1) - \langle \chi, \mathbf{1}_G \rangle}
	\]
	and
	\[
	|\kappa(\chi)| \gg_{[k:\Q],|G|,\chi(1)}  \Big( \frac{\log D_K}{D_K^{1/[K:\Q]} } \Big)^{\nu(\chi)}(\log D_K)^{\mu(\widetilde{\chi}) - \langle \chi, \mathbf{1}_G \rangle},
	\]
	where $\nu(\chi)=\langle \chi, \psi_{K/k} \rangle$ if $\psi_{K/k}$ exists and $\nu(\chi) = 0$ otherwise. 
\end{theorem}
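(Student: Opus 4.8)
The plan is to combine three ingredients. First, the Euler product of $L(s,\chi)$ controls its size on the half-line $s>1$ with precisely the exponents $\widetilde{\chi}(1)$ and $\mu(\widetilde{\chi})$ that appear in the theorem. Second, Brauer's induction theorem together with the classical analytic theory of Hecke $L$-functions supplies meromorphic continuation, a functional equation, and — crucially — the holomorphy/non-vanishing of the relevant factors on $\Re s\geq 1$ and a usable zero-free region just to the left of it. Third, Stark's results on the exceptional zero $\beta_K$, its exceptional character, and his effective form of the Brauer--Siegel theorem isolate and bound the single anomalous factor.

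\textbf{Step 1 (reduction to the Galois closure).} Since Artin $L$-functions are invariant under inflation and induction, $L(s,\chi)=L(s,\widetilde{\chi})$, hence $\kappa(\chi)=\kappa(\widetilde{\chi})$ and, by Frobenius reciprocity, $\langle\widetilde{\chi},\mathbf{1}_{\widetilde{G}}\rangle=\langle\chi,\mathbf{1}_G\rangle$; standard discriminant bounds give $\log D_{\widetilde K}\asymp_{[K:\Q]}\log D_K$ and $\log q(\widetilde{\chi})\asymp_{[K:\Q],|G|}\log D_K$. Applying Stark's exceptional-character theorem to $\widetilde{K}/\Q$, the zero $\beta_K$ (if it exists) is a simple zero of $L(s,\psi^{\ast})$ for a unique quadratic $\psi^{\ast}\in\mathrm{Irr}(\widetilde{G})$ and of no other $L(s,\rho)$, $\rho\in\mathrm{Irr}(\widetilde{G})$; a short compatibility computation identifies $\langle\widetilde{\chi},\psi^{\ast}\rangle$ with $\nu(\chi)$, so $\beta_K$ is a zero of $L(s,\widetilde{\chi})$ of order exactly $\nu(\chi)$. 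Thus it suffices to prove both bounds for $L(s,\widetilde{\chi})$, provided that in the lower bound we can still produce $D_K^{-\nu(\chi)/[K:\Q]}$ and not merely $D_{\widetilde K}^{-\nu(\chi)/[\widetilde K:\Q]}$; this last point is handled in Step 3 using the quadratic extension of $k$ cut out by $\psi_{K/k}$.

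\textbf{Step 2 (Euler-product bounds and the upper bound).} Comparing local Euler factors of $L(s,\widetilde{\chi})$ and $\zeta(s)$ gives, for real $s>1$,
\[
\zeta(s)^{\mu(\widetilde{\chi})}\ \leq\ |L(s,\widetilde{\chi})|\ \leq\ \zeta(s)^{\widetilde{\chi}(1)},
\]
since at each prime the real part of the trace of a Frobenius power on the inertia invariants lies in $[\mu(\widetilde{\chi}),\widetilde{\chi}(1)]$. Writing $L(s,\widetilde{\chi})=\zeta(s)^{\langle\chi,\mathbf{1}_G\rangle}M(s)$ with $M(1)=\kappa(\chi)$ and evaluating at $s=1+1/\log q(\widetilde{\chi})$ pins $|M(1+1/\log q(\widetilde{\chi}))|$ between constant multiples of $(\log D_K)^{\mu(\widetilde{\chi})-\langle\chi,\mathbf{1}_G\rangle}$ and $(\log D_K)^{\widetilde{\chi}(1)-\langle\chi,\mathbf{1}_G\rangle}$. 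Expressing $L(s,\widetilde{\chi})$ through Brauer's theorem as a $\Z$-product of Hecke $L$-functions shows $M(s)$ is holomorphic and non-vanishing on $\Re s\geq 1$, and Stark's zero-free region shows the only zero of $L(s,\widetilde{\chi})$ in $\Re s>1-c/\log D_K$ is $\beta_K$. For the upper bound no zero obstructs the passage $s=1+1/\log q(\widetilde{\chi})\to s=1$, so a logarithmic-derivative estimate transfers the bound down to $s=1$, giving $|\kappa(\chi)|\ll(\log D_K)^{\widetilde{\chi}(1)-\langle\chi,\mathbf{1}_G\rangle}$.

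\textbf{Step 3 (the lower bound and the exceptional zero).} For the lower bound the same passage now crosses the vicinity of $\beta_K$, a zero of $L(s,\widetilde{\chi})$ of order $\nu(\chi)$, which multiplies the bound by a factor comparable to $\bigl((1-\beta_K)\log D_K\bigr)^{\nu(\chi)}$, so that
\[
|\kappa(\chi)|\ \gg\ (\log D_K)^{\mu(\widetilde{\chi})-\langle\chi,\mathbf{1}_G\rangle}\,\bigl((1-\beta_K)\log D_K\bigr)^{\nu(\chi)}.
\]
It then remains to bound $(1-\beta_K)\log D_K$ below by $(\log D_K)\,D_K^{-1/[K:\Q]}$. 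Here one uses that $\beta_K$ is a zero of the one-dimensional $L(s,\psi_{K/k})$, the $L$-function of a quadratic (or trivial) extension $E/k$; since $\beta_K$ is a zero of $\zeta_K$ the character $\psi^{\ast}$ occurs in the permutation character of $K$, equivalently $E\subseteq K$, whence $D_E^{[K:E]}\mid D_K$ and $D_E\leq D_K^{2/[K:\Q]}D_k^{O(1)}$. Combining $\Res_{s=1}\zeta_E=\Res_{s=1}\zeta_k\cdot L(1,\psi_{K/k})$ with the analytic class number formula for $E$ — using $h_E\geq 1$, the classical lower bound $\log\mathrm{Reg}(E)\gg\log D_E$ when $E$ has a fundamental unit, and Landau's upper bound for $\Res_{s=1}\zeta_k$ — produces the required effective lower bound for $L(1,\psi_{K/k})$, hence for $1-\beta_K$, in terms of $D_K^{1/[K:\Q]}$ (this is Stark's third contribution, adapted to the present setting). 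Substituting gives the stated lower bound.

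\textbf{Main obstacle.} The delicate part is the transfer from $s=1+1/\log q(\widetilde{\chi})$ down to $s=1$. Besides ruling out, via Stark's zero-free region, any edge zero other than $\beta_K$, one must contend with the zeros lying just to the left of $\Re s=1$: naive integration of $L'/L$ produces an error of size $\gg\log D_K$ that would swamp the factor $(1-\beta_K)^{\nu(\chi)}$, so it must be replaced by a more robust device (a Borel--Carathéodory-type estimate, or a smoothed explicit formula). Carrying this out while keeping the exponents exactly $\widetilde{\chi}(1)$, $\mu(\widetilde{\chi})$, and $\nu(\chi)$, and reconciling the totally real and the imaginary cases for $\psi^{\ast}$ through the $(\log D_K)^{\mu(\widetilde{\chi})}$ term, is the technical heart of the argument; the remaining inputs — Brauer's theorem, properties of Hecke $L$-functions, Stark's exceptional-zero and effective Brauer--Siegel results, the analytic class number formula, and classical regulator lower bounds — are available off the shelf.
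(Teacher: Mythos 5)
The broad plan you lay out — induce to $\Q$, exploit Euler-product bounds in $\Re s>1$ with the exponents $\widetilde{\chi}(1)$ and $\mu(\widetilde\chi)$, isolate the exceptional zero via Stark — agrees with the paper's high-level shape, but the technical heart of your argument is different from the paper's and, as sketched, does not close. The paper does \emph{not} transfer a bound from $s_0=1+1/\log q(\widetilde\chi)$ down to $s=1$; instead it approximates $\kappa(\chi)$ directly as a short Euler product $\prod_{\N\kp\leq T}L_\kp(1,\chi)$ with $\log T\asymp \log D_K$, using the Thorner--Zaman effective Chebotarev density theorem (\cref{thm:chebotarev}) to control the prime sums and to carve out the exceptional-zero factor. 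This substitutes for the non-negativity of Dirichlet coefficients that underlies Landau's and Stark's arguments for $\zeta_K$, a feature of Dedekind zeta functions the paper explicitly identifies as \emph{structural} to the classical method and unavailable for general $\chi$.

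The concrete gap is in your Step 2/Step 3 transfer, which you flag as the ``main obstacle'' but do not carry out. On the segment $[1,s_0]$ the trivial bound $|M'/M(\sigma)|\ll 1/(\sigma-1)$ integrates to a divergent quantity, so a direct $\int L'/L$ fails. A Borel--Carath\'eodory argument in a disc of radius $\asymp 1/\log D_K$ about the edge $s=1$, fed only by a convexity/functional-equation bound $\log|L(s,\widetilde\chi)|\ll\log q(\widetilde\chi)$, produces an error of size $O(\log D_K)$ in $\log|\kappa(\chi)|$, i.e.\ only the trivial $|\kappa(\chi)|\leq D_K^{O(1)}$, far weaker than the $(\log D_K)^{\widetilde{\chi}(1)-\langle\chi,\mathbf{1}_G\rangle}$ you need. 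The Hadamard partial-fraction route runs into a sum over zeros of order $\log\log q$ at best, which again destroys the exact exponents $\widetilde{\chi}(1)$ and $\mu(\widetilde{\chi})$. What actually closes the gap is a quantitative prime-counting statement — that is, a Chebotarev density theorem with error term uniform in $D_K$ and sensitive to $\beta_K$; ``a smoothed explicit formula'' done properly amounts to reproving Thorner--Zaman, which is precisely what you need to cite or develop. Separately, your Step 3 sketch of the effective lower bound on $1-\beta_K$ is not correct as written: there is no classical bound $\log\mathrm{Reg}(E)\gg\log D_E$, and the class-number-formula route you outline does not reproduce Stark's argument; you need his reduction to a quadratic field over $\Q$ and Siegel's theorem there, which is \cref{lem:stark-effective} in the paper.
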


 Our second main result estimates $\kappa(\chi)$ in terms of the conductor $q(\chi)$, which can be advantageous when $q(\chi)$ is a small power of $D_K$. This situation arises naturally in applications since the conductor discriminant formula implies 
 \[
 D_K = \prod_{\psi} q(\psi)^{\psi(1)} \quad \text{ and } \quad q(\chi)  = \prod_{\psi} q(\psi)^{\langle\chi, \psi\rangle},
 \]
 where $\psi$ runs over $\mathrm{Irr}(G)$, the set of irreducible characters of $G$. 
 However, the estimation of $\kappa(\chi)$ in terms of $q(\chi)$ turns out to be more subtle, because we must account for the subset  $\Psi_{K/k}(G)$ of \textit{potentially exceptional  characters} associated to $K/k$, all of which are trivial or quadratic (and hence satisfy Artin's holomorphy conjecture). This subset is defined as 
\begin{equation} \label{eqn:exceptional-quadratics} 	
\Psi_{K/k}(G) := \Big\{ \psi \in \mathrm{Irr}(G) : \psi^2 = \mathbf{1}_G \text{ and } L(s,\psi) \text{ has a real zero } \beta_{\psi} > 1 - \frac{1}{4 \log q(\psi)} \Big\}. 
\end{equation}

\begin{theorem} \label{thm:cond}
Let $K/k$ be a Galois extension of number fields with Galois group $G$. For any character $\chi$ of $G$, we have that
\[
 |\kappa(\chi)| \ll_{[k:\Q],|G|,\chi(1)} (\log eD_k)^{\widetilde{\chi}(1) -\chi(1)} (\log q(\chi))^{\chi(1)-\langle \chi, \mathbf{1}_G \rangle}
\]		
and
\[
|\kappa(\chi)| \gg_{[k:\Q],|G|,\chi(1)} \epsilon(\chi) (\log eD_k)^{\mu(\widetilde{\chi}) - \mu(\chi)}  (\log q(\chi))^{\mu(\chi)-\langle \chi, \mathbf{1}_G \rangle} ,
\]
where
\begin{equation} \label{eqn:epsilon-def}
\epsilon(\chi) := \min \Big\{\Big( \frac{\log q(\psi)}{ (D_k q(\psi))^{1/2[k:\Q] } } \Big)^{\langle \chi, \psi \rangle} :   \psi \in \Psi_{K/k}(G)  \Big\} \cup \{1\}. 	
\end{equation}

\end{theorem}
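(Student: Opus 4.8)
The plan is to establish the two bounds by a common mechanism: reduce $L(s,\chi)$ to a product of Hecke $L$-functions via Brauer induction, insert effective Stark-type estimates for the leading Laurent coefficients of those Hecke $L$-functions, and then bookkeep carefully enough that the analytic conductor $q(\chi)$ and the base discriminant $D_k$ — rather than $D_K$ — govern the answer, with the possible exceptional zeros organised into the factor $\epsilon(\chi)$.

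First I would apply Brauer's induction theorem to $\chi$, writing $\chi=\sum_j m_j\,\mathrm{Ind}_{H_j}^{G}\lambda_j$ with $m_j\in\mathbb{Z}$, $H_j\leq G$, and each $\lambda_j$ a linear character of $H_j$, i.e.\ a ray class character of the intermediate field $F_j:=K^{H_j}$, $k\subseteq F_j\subseteq K$. Inductivity of Artin $L$-functions and of Artin conductors, together with comparison of orders of vanishing at $s=1$, yields the identities
\[
L(s,\chi)=\prod_j L(s,\lambda_j,F_j)^{m_j},\qquad q(\chi)=\prod_j \mathfrak q(\lambda_j)^{m_j},\qquad \langle\chi,\mathbf 1_G\rangle=\sum_j m_j\langle\lambda_j,\mathbf 1_{H_j}\rangle,
\]
where $\mathfrak q(\lambda_j)$ is the analytic conductor of $L(s,\lambda_j,F_j)$; and on passing to leading Laurent coefficients at $s=1$,
\[
\kappa(\chi)=\prod_j \kappa_{F_j}(\lambda_j)^{m_j},
\]
with $\kappa_{F_j}(\lambda_j)$ the leading coefficient of $L(s,\lambda_j,F_j)$ at $s=1$. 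As the $m_j$ need not be positive, each bound on $|\kappa(\chi)|$ requires bounds on the $|\kappa_{F_j}(\lambda_j)|$ in both directions.

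The analytic core is therefore a pair of effective estimates for a single Hecke $L$-function $L(s,\lambda,F)$, extending to arbitrary $F$ and $\lambda$ the facts recalled above for $\zeta_K$. On one hand, a Landau-type upper bound $|\kappa_F(\lambda)|\ll_{[F:\Q]}(\log \mathfrak q(\lambda))^{[F:\Q]-\langle\lambda,\mathbf 1\rangle}$, obtained by a smoothed Perron integral that truncates the Dirichlet series near the conductor and uses the functional equation of $L(s,\lambda,F)$. On the other hand, a lower bound: $L(s,\lambda,F)$ has at most one zero in a disc of radius $\asymp 1/\log \mathfrak q(\lambda)$ about $s=1$; such an exceptional zero $\beta_\lambda$, if present, forces $\lambda^2=\mathbf 1$; and $|\kappa_F(\lambda)|\gg_{[F:\Q]}(1-\beta_\lambda)$, where $1-\beta_\lambda$ is read as $\asymp 1/\log \mathfrak q(\lambda)$ when there is no exceptional zero — this last point by isolating the possible exceptional zero in the Hadamard product of the completed $L$-function, exactly as in Stark and Heilbronn. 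An effective Brauer--Siegel estimate of Stark's type, $1-\beta_\lambda\gg_{[F:\Q]}(\log \mathfrak q(\lambda))\,\mathfrak q(\lambda)^{-1/[F:\Q]}$, applied to the quadratic field cut out by $\lambda$, converts the exceptional-zero factor into an explicit power of $\mathfrak q(\lambda)$. To convert degree exponents into $\mu$-exponents and to separate the $(\log eD_k)$- and $(\log q(\chi))$-contributions, I would further use the elementary Euler-product inequalities $\zeta_k(\sigma)^{\mu(\chi)}\leq|L(\sigma,\chi)|\leq\zeta_k(\sigma)^{\chi(1)}$ for real $\sigma>1$ (the right because there are at most $\chi(1)$ local roots, the left because the Dirichlet series of $\mu(\chi)\log\zeta_k-\Re\log L(s,\chi)$ has nonpositive coefficients), evaluated at $\sigma=1+c/\log q(\chi)$ for a suitable constant $c>0$, together with Landau's bound $\Res_{s=1}\zeta_k(s)\ll(\log eD_k)^{[k:\Q]-1}$, the factorisation $q(\chi)\asymp D_k^{\chi(1)}\N_{k/\Q}\mathfrak f_{K/k}(\chi)$ from the conductor--discriminant formula, the identity $\widetilde\chi(1)=[k:\Q]\chi(1)$, and an elementary comparison of $\mu(\chi)$ and $\mu(\widetilde\chi)$; here the archimedean data of $L(s,\chi)$ produce the $(\log eD_k)$-powers and the finite conductor produces the $(\log q(\chi))$-powers.

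It remains to assemble these ingredients, transferring the estimates from $\sigma=1+c/\log q(\chi)$ to $s=1$, and this, in combination with the Brauer decomposition, is where the work concentrates; the main obstacle is the exceptional zeros. Passing from $\sigma=1+c/\log q(\chi)$ to $s=1$ for $|L(s,\chi)|$ is a distortion estimate that is harmless as long as $L(s,\chi)$ has no zeros in a small disc about $s=1$, so the crux is to show that the only such zeros come from the potentially exceptional characters: by a Deuring--Heilbronn/Stark argument, an exceptional zero of any Hecke piece $L(s,\lambda_j,F_j)$ must coincide with $\beta_\psi$ for some quadratic or trivial $\psi\in\mathrm{Irr}(G)$ belonging to $\Psi_{K/k}(G)$ (the quadratic $\lambda_j$ cuts out a quadratic extension inside $K$, whose Galois closure over $k$ is a quotient of $G$ carrying such a $\psi$, and $\beta_\psi$ is a real zero of the degree-one Artin $L$-function $L(s,\psi)$ over $k$). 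Frobenius reciprocity gives $\sum_j m_j\langle\lambda_j,\psi|_{H_j}\rangle=\langle\chi,\psi\rangle\geq0$, so the net order of $L(s,\chi)=\prod_{\psi}L(s,\psi)^{\langle\chi,\psi\rangle}$ at $s=\beta_\psi$ is $\langle\chi,\psi\rangle$; hence the a priori dangerous factors $|\kappa_{F_j}(\lambda_j)|^{m_j}$ with $m_j<0$ and $\beta_{\lambda_j}$ present are cancelled by matching factors with $m_j>0$, and, up to lower-order factors of size $(\log q(\psi))^{O(1)}$ absorbed into $\epsilon(\chi)$, the surviving contribution is $\prod_{\psi\in\Psi_{K/k}(G)}(1-\beta_\psi)^{\langle\chi,\psi\rangle}$; the effective Brauer--Siegel bounds for $1-\beta_\psi$, combined with $(D_kq(\psi))^{1/2[k:\Q]}\asymp D_F^{1/[F:\Q]}$ for $F=K^{\ker\psi}$, turn this into $\epsilon(\chi)$. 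The remaining steps — the Perron and distortion arguments, the elementary character-sum bounds, and the conductor and archimedean bookkeeping through the conductor--discriminant formula — are lengthy but routine; the genuine difficulty is establishing the Stark-type zero-free region and exceptional-zero classification for $L(s,\chi)$ with explicit dependence on $q(\chi)$ alone, and verifying that $\Psi_{K/k}(G)$ captures every potential Siegel-type obstruction with the correct multiplicity.
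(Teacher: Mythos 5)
Your proposal is a genuinely different route from the paper's. You go through Brauer induction, Landau/Stark-type bounds on Hecke $L$-functions, Euler-product inequalities $\zeta_k(\sigma)^{\mu(\chi)}\leq|L(\sigma,\chi)|\leq\zeta_k(\sigma)^{\chi(1)}$, and a distortion argument from $\sigma=1+c/\log q(\chi)$ to $s=1$. The paper instead expresses $\kappa(\chi)$ directly as a short Euler product over $\N\kp\leq T$ with an explicit exceptional-zero correction (Theorem~\ref{thm:short-euler}), the error being controlled by the uniform Chebotarev theorem of Thorner--Zaman (Theorem~\ref{thm:chebotarev}); the potential-exceptional-zero contribution is then a closed-form factor $\eta(\psi,T)$, and no Hadamard/distortion argument is ever needed. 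In fact the paper remarks in \S\ref{sec:intro} that a route through holomorphic $L$-functions with non-negative coefficients (which is what your reduction to Hecke $L$-functions achieves) is Stark's method, and that they had to proceed ``by different means''; so you have effectively reconstructed the road the authors say they had to leave.

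There is a concrete gap in your lower bound. Inserting $\zeta_k(\sigma)^{\mu(\chi)}\leq|L(\sigma,\chi)|$ at $\sigma=1+c/\log q(\chi)$, together with Landau's bound $\Res_{s=1}\zeta_k(s)\ll(\log eD_k)^{[k:\Q]-1}$ (an upper bound on $\zeta_k(\sigma)$ is what you need, since $\mu(\chi)<0$), yields $(\log eD_k)^{([k:\Q]-1)\mu(\chi)}(\log q(\chi))^{\mu(\chi)}$ before the distortion loss. But the theorem claims the stronger exponent $\mu(\widetilde{\chi})-\mu(\chi)$ on $\log eD_k$, and since $\mu(\widetilde{\chi})\geq[k:\Q]\mu(\chi)$ with strict inequality in general, the exponent $([k:\Q]-1)\mu(\chi)$ is strictly worse; no ``elementary comparison of $\mu(\chi)$ and $\mu(\widetilde{\chi})$'' can recover $\mu(\widetilde{\chi})$ from $\mu(\chi)$, as $\mu(\widetilde{\chi})$ is a genuinely finer invariant of $\chi$ and the embedding $k\hookrightarrow K$. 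To obtain $\mu(\widetilde{\chi})$ one must aggregate all primes of $k$ above the same rational prime $p$, bounding $\widetilde{\chi}(p)=\sum_{\N\kp=p}\chi(\kp)\geq\mu(\widetilde{\chi})$ collectively over $\Q$, rather than bounding each $\chi(\kp)\geq\mu(\chi)$ separately; this is exactly what Proposition~\ref{prop:small-primes} does by splitting the short Euler product at $T_N\asymp D_k$ and using the two-sided Mertens asymptotic over $\Q$ on the small range $\N\kp\leq T_N$ while using the one-sided Mertens upper bound (Lemma~\ref{lem:mertens}) over $k$ on the large range, exploiting the fact that the partial sums $\chi_n$ for $n<N$ have no trivial component (so $\mu(\chi_n)<0$ and only the upper bound is needed). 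This telescoping mechanism has no counterpart in your sketch.

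Two further issues, more of execution than of idea: (a) the Brauer decomposition introduces Hecke conductors $\mathfrak q(\lambda_j)$ with $m_j$ of both signs, and when some $m_j<0$ the $\mathfrak q(\lambda_j)$ can exceed $q(\chi)$ by an arbitrary amount, so converting $\prod_j(\log\mathfrak q(\lambda_j))^{\pm}$ into powers of $\log q(\chi)$ and $\log eD_k$ is not automatic from the relation $q(\chi)=\prod_j\mathfrak q(\lambda_j)^{m_j}$; and (b) the distortion step does require, as you say, classifying the zeros of $L(s,\chi)$ near $s=1$ at the $1/\log q(\chi)$ scale, but carrying it through needs a zero-counting/log-derivative estimate and a careful accounting of the order $\langle\chi,\psi\rangle$ of the zero at each $\beta_\psi$, none of which is routine at the stated level of uniformity. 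The paper's short Euler product framework is precisely designed to avoid both of these.
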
 
This result might seem counterintuitive since the exceptional character $\psi_{K/k}$ is unique. However, even if $\chi$  satisfies $\langle \chi, \psi_{K/k}\rangle = 0$, the character $\chi$ might still possess other components $\psi \in \Psi_{K/k}(G)$ whose $L$-functions $L(s,\psi)$ have real zeros close to $s=1$. Crucially, the notion of ``close to $s=1$'' is \textit{relative to their conductor $q(\psi)$}, which might be  smaller than any power of $D_K$ or even $\log D_K$ but the putative real zero still impacts the size of $\kappa(\chi)$.   
	
\cref{thm:cond} has  two immediate corollaries. 

\begin{corollary} \label[corollary]{cor:nonexceptional}
	If $\chi$ is a character of $G$ without trivial or quadratic components, then 
	\[
	|L(1,\chi)| \ll_{[k:\mathbb{Q}],|G|,\chi(1)} (\log eD_k)^{\widetilde{\chi}(1) -\chi(1)} (\log q(\chi))^{\chi(1)}
	\]
	and
	\[
	|L(1,\chi)|  \gg_{[k:\Q],|G|,\chi(1)}	(\log eD_k)^{\mu(\widetilde{\chi}) - \mu(\chi)} (\log q(\chi))^{\mu(\chi)}. 
	\]
	In particular, the above holds for any irreducible character $\chi$ of degree $\geq 2$. 
\end{corollary}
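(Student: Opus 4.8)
The plan is to deduce this directly from \cref{thm:cond}, after checking that the hypothesis on $\chi$ collapses the auxiliary quantities appearing there. First I would record the elementary representation-theoretic fact that the irreducible characters $\psi$ of $G$ with $\psi^2 = \mathbf{1}_G$ are exactly the trivial and quadratic characters of $G$. Since $\chi$ has no trivial or quadratic components, this gives $\langle \chi, \psi \rangle = 0$ for every such $\psi$; in particular $\langle \chi, \mathbf{1}_G \rangle = 0$, so by the Artin--Hecke analytic continuation quoted in the introduction $L(s,\chi)$ is holomorphic and non-vanishing at $s=1$ and $\kappa(\chi) = L(1,\chi)$. Consequently the exponents $\widetilde{\chi}(1) - \langle \chi, \mathbf{1}_G \rangle$ and $\mu(\chi) - \langle \chi, \mathbf{1}_G \rangle$ occurring in \cref{thm:cond} simplify to $\widetilde{\chi}(1)$ and $\mu(\chi)$, which already matches the powers of $\log q(\chi)$ in the claimed bounds, while the powers of $\log eD_k$ are unchanged.

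Next I would dispose of the correction factor $\epsilon(\chi)$ in the lower bound. By its definition in \eqref{eqn:exceptional-quadratics}, every $\psi \in \Psi_{K/k}(G)$ satisfies $\psi^2 = \mathbf{1}_G$, so by the previous paragraph $\langle \chi, \psi \rangle = 0$ for all $\psi \in \Psi_{K/k}(G)$. Hence each term in the minimum defining $\epsilon(\chi)$ in \eqref{eqn:epsilon-def} equals $(\,\cdot\,)^{0} = 1$, and therefore $\epsilon(\chi) = 1$. Substituting $\langle \chi, \mathbf{1}_G \rangle = 0$ and $\epsilon(\chi) = 1$ into the two displays of \cref{thm:cond} yields exactly the two asserted inequalities for $|L(1,\chi)|$.

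For the final ``in particular'' assertion I would simply note that an irreducible character $\chi$ with $\chi(1) \geq 2$ is neither the trivial character nor a quadratic character, and, being irreducible, its only irreducible component is $\chi$ itself; hence it has no trivial or quadratic components and the first part applies verbatim. I do not expect any real obstacle here: the whole argument is bookkeeping confirming that the stated hypothesis forces $\langle \chi, \mathbf{1}_G \rangle = 0$ and trivializes the potentially-exceptional correction $\epsilon(\chi)$, so that \cref{thm:cond} specializes cleanly. The only point requiring a line of care is the identification of $\{\psi \in \mathrm{Irr}(G) : \psi^2 = \mathbf{1}_G\}$ with the trivial and quadratic characters, together with the observation that $\Psi_{K/k}(G)$ is contained in this set.
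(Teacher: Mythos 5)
Your proposal is correct and is exactly the paper's argument: the hypothesis forces $\langle \chi, \psi \rangle = 0$ for every $\psi \in \mathrm{Irr}(G)$ with $\psi^2 = \mathbf{1}_G$, hence $\langle \chi, \mathbf{1}_G \rangle = 0$ and $\epsilon(\chi) = 1$, and \cref{thm:cond} specializes immediately. (One harmless slip: the upper-bound exponent on $\log q(\chi)$ in \cref{thm:cond} is $\chi(1) - \langle \chi, \mathbf{1}_G\rangle$, not $\widetilde{\chi}(1) - \langle \chi, \mathbf{1}_G\rangle$ as you wrote mid-argument, but your stated conclusion $(\log q(\chi))^{\chi(1)}$ is the right one.)
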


\begin{corollary} \label[corollary]{cor:irreducibles}
	If $\chi$ is a character of $G$   then 
	\[
	|\kappa(\chi)| \ll_{[k:\Q],|G|,\chi(1)}  (\log eD_k)^{([k:\Q]-1) \chi(1)} \prod_{\psi \neq \mathbf{1}_G} (\log q(\psi))^{\psi(1) \langle \chi, \psi\rangle}
	\]
	and
	\[
	|\kappa(\chi)| \gg_{[k:\Q],|G|,\chi(1)} \epsilon(\chi)  (\log eD_k)^{- ([k:\Q]-1) \chi(1) + ([k:\Q]-2) \langle\chi, \mathbf{1}_G\rangle} \prod_{\psi \neq \mathbf{1}_G}  (\log q(\psi))^{-\psi(1)\langle \chi, \psi \rangle},
	\]
	where $\psi$ runs over  the nontrivial irreducible characters of $G$. 
\end{corollary}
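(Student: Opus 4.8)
The plan is to reduce Corollary \ref{cor:irreducibles} to Theorem \ref{thm:cond} by working constituent‑by‑constituent. Since $L(s,\chi)=\prod_{\psi\in\mathrm{Irr}(G)}L(s,\psi)^{\langle\chi,\psi\rangle}$ and $\langle\chi,\mathbf 1_G\rangle=\sum_\psi\langle\chi,\psi\rangle\langle\psi,\mathbf 1_G\rangle$, the leading coefficient is multiplicative, $\kappa(\chi)=\prod_{\psi}\kappa(\psi)^{\langle\chi,\psi\rangle}$, with $\kappa(\mathbf 1_G)=\Res_{s=1}\zeta_k(s)$ and $\kappa(\psi)=L(1,\psi)$ for every nontrivial irreducible $\psi$. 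So first I would apply Theorem \ref{thm:cond} to each irreducible $\psi$ separately and multiply. For this I need only elementary character theory: $\widetilde\psi(1)=[k:\Q]\,\psi(1)$ because induction multiplies degrees by the index $[k:\Q]$; $\mu(\widetilde{\mathbf 1_G})\ge 0$ because $\widetilde{\mathbf 1_G}$ is a permutation character; and, for $\psi\neq\mathbf 1_G$, $\mu(\widetilde\psi)\ge[k:\Q]\mu(\psi)$, since each nonzero term in the induced‑character formula has real part $\ge\mu(\psi)$, there are at most $[k:\Q]$ of them, and $\mu(\psi)<0$ as $\sum_g\psi(g)=0$. Combined with $-\psi(1)\le\mu(\psi)\le\psi(1)$, this shows the exponent of $\log eD_k$ coming out of Theorem \ref{thm:cond} for $\psi$ lies between $-([k:\Q]-1)\psi(1)$ and $([k:\Q]-1)\psi(1)$, and the exponent of $\log q(\psi)$ between $-\psi(1)$ and $\psi(1)$.

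Next I would pin down the conductors. For $\psi=\mathbf 1_G$ we have $L(s,\mathbf 1_G)=\zeta_k(s)$, so $q(\mathbf 1_G)\asymp_{[k:\Q]}D_k$ and $\log q(\mathbf 1_G)\asymp_{[k:\Q]}\log eD_k$; for $\psi\neq\mathbf 1_G$ the completed Artin $L$‑function has conductor $\ge D_k^{\psi(1)}$ (the discriminant already contributes $D_k^{\psi(1)}$ before the Artin conductor ideal), hence $\log q(\psi)\gg_{[k:\Q],|G|}\log eD_k$. Thus in each factor one never loses more than a bounded power of $\log eD_k$ against $\log q(\psi)$. Multiplying the $|G|$ bounds and collecting exponents via $\chi(1)=\sum_\psi\psi(1)\langle\chi,\psi\rangle=\langle\chi,\mathbf 1_G\rangle+\sum_{\psi\neq\mathbf 1_G}\psi(1)\langle\chi,\psi\rangle$, the powers of $\log eD_k$ telescope to exactly $([k:\Q]-1)\chi(1)$ on the upper side and to $-([k:\Q]-1)\chi(1)+([k:\Q]-2)\langle\chi,\mathbf 1_G\rangle$ on the lower side, while the remaining factors are $\prod_{\psi\neq\mathbf 1_G}(\log q(\psi))^{\pm\psi(1)\langle\chi,\psi\rangle}$. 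This already yields the upper bound (and, taking $\langle\chi,\mathbf 1_G\rangle=0$ with no quadratic constituents, recovers the case of Corollary \ref{cor:nonexceptional} with all $\epsilon$‑factors equal to $1$).

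The delicate point, which I expect to be the main obstacle, is the factor $\epsilon(\chi)$ in the lower bound: multiplying the constituentwise bounds produces $\prod_{\psi}\epsilon(\psi)^{\langle\chi,\psi\rangle}$, and from \eqref{eqn:epsilon-def} one reads off that for irreducible $\psi$ this equals $1$ unless $\psi\in\Psi_{K/k}(G)$, in which case it is $\min\{1,\log q(\psi)/(D_kq(\psi))^{1/2[k:\Q]}\}$, so $\prod_\psi\epsilon(\psi)^{\langle\chi,\psi\rangle}=\prod_{\psi\in\Psi_{K/k}(G)}\big(\log q(\psi)/(D_kq(\psi))^{1/2[k:\Q]}\big)^{\langle\chi,\psi\rangle}$. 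To identify this with $\epsilon(\chi)=\min_{\psi\in\Psi_{K/k}(G)}(\cdots)^{\langle\chi,\psi\rangle}\wedge 1$ one must rule out a genuine compounding of penalties from several distinct potentially exceptional constituents; concretely, if $\psi_1\neq\psi_2$ both lay in $\Psi_{K/k}(G)$, then $L(s,\psi_1)$ and $L(s,\psi_2)$ would each have a real zero past $1-\tfrac1{4\log q(\psi_i)}$, and — feeding these into the Dedekind zeta function of the (quadratic or biquadratic) extension of $k$ cut out by $\psi_1,\psi_2$, which has nonnegative Dirichlet coefficients and discriminant a bounded power of $D_k\,q(\psi_1)\,q(\psi_2)\,q(\psi_1\psi_2)$ — one must derive a contradiction with Stark's theorem on the uniqueness and simplicity of the exceptional zero (equivalently, invoke the repulsion of Landau–Siegel zeros of distinct real characters, with the constant $1/4$ chosen precisely to make this go through). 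Granting that only the single worst penalty survives, $\prod_\psi\epsilon(\psi)^{\langle\chi,\psi\rangle}=\epsilon(\chi)$ and the lower bound follows; I expect the careful zero‑repulsion bookkeeping here, rather than any of the character‑theoretic manipulations, to be the crux.
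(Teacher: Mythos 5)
Your strategy (decompose $\kappa(\chi)=\prod_\psi\kappa(\psi)^{\langle\chi,\psi\rangle}$, apply \cref{thm:cond} to each irreducible $\psi$, and collect exponents using $\widetilde\psi(1)=[k:\Q]\psi(1)$, $\mu(\widetilde{\mathbf 1_G})\geq 0$, $\mu(\widetilde\psi)\geq [k:\Q]\mu(\psi)$ for nontrivial $\psi$, and $\mu(\psi)\geq -\psi(1)$) matches the paper's proof, and your telescoping over exponents of $\log eD_k$ and $\log q(\psi)$ is done correctly; the upper bound goes through exactly as you describe.

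The gap is precisely in the passage from $\prod_\psi \epsilon(\psi)^{\langle\chi,\psi\rangle}$ to $\epsilon(\chi)$, and your proposed resolution does not work. First, there is no contradiction to be derived: $\Psi_{K/k}(G)$ can legitimately contain several characters simultaneously, because the exceptionality threshold $1-\tfrac{1}{4\log q(\psi)}$ is \emph{relative to each $\psi$'s own conductor}, as the paper emphasizes after \cref{thm:cond}. Second, and more seriously, once you have applied \cref{thm:cond} to each $\psi$ separately, you have already replaced the actual zero data $\eta(\psi)=(1-\beta_\psi)\log q(\psi)$ by the deterministic worst-case lower bound $\epsilon(\psi)=\log q(\psi)/(D_kq(\psi))^{1/2[k:\Q]}$ coming from \cref{lem:stark-effective}; these quantities are fixed by $q(\psi)$ and $D_k$ and carry no repulsion information. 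For two characters $\psi_1,\psi_2\in\Psi_{K/k}(G)$ appearing in $\chi$ with, say, $\epsilon(\psi_1)=\epsilon(\psi_2)=x\to 0$, one has $\prod_i\epsilon(\psi_i)=x^2$ while $\epsilon(\chi)=x$, so $\prod_\psi\epsilon(\psi)^{\langle\chi,\psi\rangle}/\epsilon(\chi)\to 0$ with no implied constant — the quantities genuinely differ. Deuring--Heilbronn repulsion (the paper's \cref{lem:deuring-heilbronn}) constrains the \emph{actual} zero positions $\beta_{\psi_1},\beta_{\psi_2}$, showing at most one $\eta(\psi_i)$ can be as small as its worst case while the others are forced to be $\gg 1$; this is exactly the content of \cref{prop:eta}(iv), which establishes $\prod_\psi\eta(\psi)^{\langle\chi,\psi\rangle}\gg\epsilon(\chi)$. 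So the lower bound must be assembled by keeping the $\eta(\psi)$'s through the argument — via \cref{prop:residue} (or \cref{prop:residue-optimal}) with $T(\psi)=q(\psi)$, then \cref{prop:small-primes} for the short Euler products and \cref{prop:eta}(iv) for $E=\prod_\psi\eta(\psi,T(\psi))^{\langle\chi,\psi\rangle}$ — rather than by invoking \cref{thm:cond} constituentwise and trying to recombine the $\epsilon(\psi)$'s afterward. The paper's written proof of \cref{cor:irreducibles} is itself quite terse on exactly this point, but the machinery it has already built (\cref{prop:eta}(iv)) makes the correct route clear; your proposal would need to retreat to that level rather than apply \cref{thm:cond} as a black box per constituent.
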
	
	
We emphasize that \cref{thm:disc,thm:cond} are unconditional, and in particular do not rely on any unproven hypotheses about the zeros or poles of Artin $L$-functions, nor do they place restrictions on the characters $\chi$ to which they apply.  This is in notable contrast to Stark's work, which, while unconditional, relies on two nontrivial properties of Dedekind zeta functions: they are known to be entire away from $s=1$, and their Dirichlet series coefficients are non-negative.  The former is a technical barrier to generalization, the latter structural to the method, but both are overcome by \cref{thm:disc,thm:cond}, which we necessarily prove by different means.  See \S\ref{sec:examples} for additional novelties, and \S\ref{sec:sketch} for a discussion of our approach.

\subsection*{Organization} Section 2 provides some illustrative examples and Section 3 outlines the proof. Section 4 introduces notation for Artin $L$-functions and preliminary lemmas. Section 5 records lemmas on Landau--Siegel zeros of Artin $L$-functions. Section 6 establishes short Euler product approximations for Artin $L$-functions. Section 7 prepares three key propositions to prove our main theorem. Section 8 contains the proofs of \cref{thm:disc,thm:cond} and \cref{cor:nonexceptional,cor:irreducibles}. Section 9 establishes the sole conditional bound from \cref{prop:grh-bound}. 
	
\subsection*{Acknowledgements}

 The authors would like to thank Jesse Thorner for useful discussions and their encouragement, and Haonan Zhao for pointing out several typos in an earlier version.
PJC was supported by the National Research Foundation of Korea (NRF) grant funded by the Korea government (MSIT) (No. RS-2022-NR069491 and No. RS-2025-02262988). RJLO was partially supported by NSF grant DMS-2200760 and by the Office of the Vice Chancellor for Research at the University of Wisconsin-Madison with funding from the Wisconsin Alumni Research Foundation. AZ was partially supported by NSERC grant RGPIN-2022-04982.

%
%
%
%
%
%
\section{Some examples} \label{sec:examples}

\subsection{Dirichlet $L$-functions} \cref{thm:cond} can be applied to any primitive Dirichlet character $\chi \pmod{q}$ of order $\ell \geq 2$. If $\ell = 2$ then it yields 
\[
\frac{1}{q^{1/2}} \ll |L(1,\chi)| \ll \log q,
\]
and if $\ell \geq 3$ (or $\ell=2$ and $L(s,\chi)$ does not have a Landau--Siegel zero) then it yields 
\begin{equation} \label{eqn:dirichlet}
	\log q 
		\gg_\ell |L(1,\chi)|
		\gg_\ell  \begin{cases}
			(\log q)^{-1} & \text{if $\ell$ is even} \\
			(\log q)^{-\cos (\frac{\pi}{\ell})} & \text{if $\ell$ is odd}.
		\end{cases}
\end{equation}
Upper bounds of the same quality are classical consequences of character sum bounds (see, e.g., \cite{Pintz,GranvilleSound} for a brief history). The lower bound for $\ell=2$ follows from the class number formula for quadratic fields, and for $\ell \geq 3$, the estimate $|L(1,\chi)| \gg (\log q)^{-1}$ is a classical consequence of zero-free regions for Dirichlet $L$-functions (see, e.g., \cite[Theorem 11.4]{MontgomeryVaughan}).  We do not claim any novelty in the improvement over this bound for odd $\ell \geq 3$ (and consider it something of a folk result, possibly dating to \cite{GranvilleSound-2007}), but we have been unable to find it in the literature.  However, we note that a strong conditional version for prime $\ell$ does appear in recent work of Darbar--David--Lalin--Lumley \cite[Proposition 1.7]{DarbarDavidLalinLumley}.

\subsection{Dedekind zeta functions}
Theorem~\ref{thm:cond} recovers Stark's effective lower bound on residues of Dedekind zeta functions.  Let $F/\mathbb{Q}$ be a number field, $K/\mathbb{Q}$ its normal closure, and let $\chi_F$ be the character of $\mathrm{Gal}(K/\mathbb{Q})$ such that $L(s,\chi_F) = \zeta_F(s) / \zeta(s)$.  Note that $\mathrm{Res}_{s=1} \zeta_F(s) = L(1,\chi_F)$, so we may apply \cref{thm:cond} to bound the residue.  We have $\widetilde{\chi}_F = \chi_F, q(\chi_F) = D_F, \chi_F(1) = [F:\Q]-1,$ and $\mu(\chi_F) = -1$. If $F$ does not contain a quadratic subfield, then $\epsilon(\chi_F) = 1$ implying
	\[
		\frac{1}{\log D_F} 
			\ll_{[F:\mathbb{Q}]} \mathrm{Res}_{s=1} \zeta_F(s) 
			\ll_{[F:\mathbb{Q}]} (\log D_F)^{[F:\mathbb{Q}]-1}. 
	\]
On the other hand, if $F$ does contain a quadratic subfield, then $\epsilon(\chi_F) \gg_{[F:\Q]} D_F^{-1/[F:\Q]} \log D_F$ by the conductor-discriminant formula (cf. \cref{lem:conductor-bound}) so we obtain instead
	\[
		\frac{1}{D_F^{1/[F:\mathbb{Q}]}}
			\ll_{[F:\mathbb{Q}]} \mathrm{Res}_{s=1} \zeta_F(s)
			\ll_{[F:\mathbb{Q}]} (\log D_F)^{[F:\mathbb{Q}]-1}.
	\]
These match Landau and Stark's effective bounds on the residue.
	
\subsection{Choice of base field} \label{subsec:example-basefield}
By taking a suitable induction, any Artin $L$-function $L(s,\chi)$ over $k$ may be regarded as an Artin $L$-function $L(s,\chi^*)$ over some subfield $k^* \subseteq k$, say $k^* = \Q$.  Applying Theorem~\ref{thm:cond} then yields multiple bounds on $L(1,\chi) = L(1,\chi^*)$ depending on the choice of base field $k$ or $k^*$. Thus, by carefully extracting the dependence on the base field, Theorem~\ref{thm:cond} reveals an interesting, and possibly new, phenomenon.  We highlight this phenomenon in the simplest interesting example.

Let $F/\mathbb{Q}$ be a non-Galois cubic field of discriminant $df^2$, and let $K/\mathbb{Q}$ denote its normal closure.  Let $\chi_F$ be the character of $\mathrm{Gal}(K/\mathbb{Q}) \simeq S_3$ so that $L(s,\chi_F) = \zeta_F(s) / \zeta(s)$.  Thus, $\chi_F$ is the character of the $2$-dimensional standard representation of $S_3$, so we have $\chi_F(1) = 2$ and $\mu(\chi_F) = -1$, and $L(s,\chi_F)$ does not have a Landau--Siegel zero since it has no quadratic component.  We thus obtain from Theorem~\ref{thm:cond} that
\begin{equation} \label{eqn:first-S3}
	\frac{1}{\log( d f^2)} \ll L(1,\chi_F) \ll (\log df^2)^2.
\end{equation}
On the other hand, the character $\chi_F$ is monomial.  Let $\psi_F$ be the nontrivial character of $\mathrm{Gal}(K/\mathbb{Q}(\sqrt{d})) \simeq C_3$ whose induction to $\mathrm{Gal}(K/\Q)$ is precisely $\chi_F$, so that 
\[
L(s,\psi_F) = L(s,\chi_F).
\]
As before, $L(s,\psi_F)$ does not have a Landau--Siegel zero since $\psi_F$ is 1-dimensional of order 3, but now we have that $\psi_F(1) = 1$ and $\mu(\psi_F) = -\frac{1}{2}$.  Hence Theorem~\ref{thm:cond} applied to $L(s,\psi_F)$, viewed as an $L$-function over $k = \mathbb{Q}(\sqrt{d})$, yields  		\begin{equation} \label{eqn:second-S3}
	\frac{1}{(\log d)^{1/2}(\log f)^{1/2}} \ll L(1,\psi_F) \ll (\log d)(\log f).
\end{equation}
But since $L(1,\psi_F) = L(1,\chi_F)$, observe \eqref{eqn:second-S3} yields a uniformly better bound than \eqref{eqn:first-S3}.  We leave open the questions of how general this example may be and how to choose the optimal base field for any given Artin $L$-function.   	

\subsection{Choice of decomposition} We can bound $\kappa(\chi)$ by decomposing a character $\chi$ into components and then applying \cref{thm:cond} to each component. This strategy is often used in character theory to reduce analysis to irreducible characters. While this approach would succeed to establish the upper bound in \cref{thm:cond}, we do not see how to deduce the lower bound in this manner.  The essential issue is that the degree map $\chi \mapsto \chi(1)$ is linear whereas the map $\chi \mapsto \mu(\chi)$ is sublinear, i.e. $(\chi+\chi')(1) = \chi(1) + \chi'(1)$ whereas $\mu(\chi+\chi') \geq \mu(\chi) +\mu(\chi')$. We highlight this distinction with two simple examples. 

Let $K/\Q$ be a Galois extension with $G = \mathrm{Gal}(K/\Q)$. Let $\psi$ and $\psi'$ be two distinct characters which have no trivial or quadratic components. Define $\chi = \psi + \psi'$, so 
\[
L(s,\chi) = L(s,\psi) L(s,\psi') \quad \text{ and } \quad q(\chi) = q(\psi) q(\psi'). 
\]
We can estimate this quantity at $s=1$ in several ways.  \cref{thm:cond} applied to $L(1,\chi)$ gives 
\begin{equation} \label{eqn:decomp1}
(\log q(\psi) + \log q(\psi'))^{\mu(\psi + \psi')} \ll_{|G|, \chi(1)} L(1,\chi) \ll_{|G|, \chi(1)} (\log q(\psi) + \log q(\psi'))^{ (\psi+\psi')(1)}. 
\end{equation}
\cref{thm:cond} applied to $L(1,\psi)$ and $L(1,\psi')$ gives
\begin{equation} \label{eqn:decomp2}
(\log q(\psi))^{\mu(\psi)} (\log q(\psi'))^{\mu(\psi')} \ll_{|G|, \chi(1)} L(1,\chi) \ll_{|G|, \chi(1)} (\log q(\psi))^{\psi(1)} (\log q(\psi'))^{\psi'(1)}. 
\end{equation} 
The optimal upper bound is provided by the second estimate since $(\psi+\psi')(1) = \psi(1) + \psi'(1)$; this pattern holds more generally as described by the remark at the end of \S\ref{sec:proofs-main}. 

The optimal lower bound  is less clear. For example, if $G \simeq D_5$ and $\psi$ and $\psi'$ are the two  distinct irreducible characters of degree $2$, then $\mu(\psi+\psi') = -1$ and $\mu(\psi) = \mu(\psi') = -\frac{1+\sqrt{5}}{2}$,  so   \eqref{eqn:decomp1} is better than \eqref{eqn:decomp2}.  More generally, if $\psi$ and $\psi'$ are such that $\log q(\psi) \asymp_{|G|,\psi(1),\psi^\prime(1)} \log q(\psi')$ (which is always the case if $\psi$ and $\psi^\prime$ are faithful; cf. \cref{lem:conductor-bound}), then by sublinearity $\mu(\psi+\psi') \geq \mu(\psi) + \mu(\psi')$, so \eqref{eqn:decomp1} is always better in this case. 

On the other hand, the second lower bound \eqref{eqn:decomp2} can be better when one of $\psi$ or $\psi'$ is not faithful. Concretely, let $G \simeq \mathbb{F}_5 \rtimes \mathbb{F}_5^\times$ be the Frobenius group of order $20$. Let $\psi = \psi_4 + \overline{\psi}_4$ where $\psi_4$ and $\overline{\psi}_4$ are the characters of degree $1$ and order $4$ coming from the quotient map $G \to \mathbb{F}_5^{\times} \simeq C_4$. Let $\psi'$ be the unique irreducible character of degree $4$, which is faithful.  Then $\mu(\psi) = -2$, $\mu(\psi^\prime)=-1$, and $\mu(\psi + \psi^\prime) = -2$. However, the character $\psi$ is not faithful by construction, so it can happen that $\log q(\psi) = o(\log q(\psi^\prime))$, and certainly $\log q(\psi) \ll \log q(\psi^\prime)$ always.  Then \eqref{eqn:decomp1} gives $L(1,\psi+\psi^\prime) \gg (\log q(\psi^\prime))^{-2}$, while \eqref{eqn:decomp2} gives $L(1,\psi+\psi^\prime) \gg (\log q(\psi))^{-2} (\log q(\psi^\prime))^{-1}$.  This second bound is evidently an improvement over the first if $\log q(\psi) \ll (\log q(\psi'))^{1/2}$.  We leave open questions of how to choose the optimal decomposition for any given Artin $L$-function. 

\subsection{Conditional bounds} Finally, we showcase one conditional result on the generalized Riemann hypothesis, whose short proof appears in \cref{sec:grh}. 
 
\begin{proposition} \label[proposition]{prop:grh-bound}
Let $K/k$ be a Galois extension of number fields with Galois group $G$. Let $\chi$ be any character of $G$. If the generalized Riemann hypothesis holds for $\zeta_K(s)$, then
\[
|\kappa(\chi)| \ll_{[k:\Q],|G|,\chi(1)} (\log\log eD_k)^{\widetilde{\chi}(1)-\chi(1)} (\log\log q(\chi))^{\chi(1)-\langle \chi, \mathbf{1}_G \rangle},
\]
and 
\[
|\kappa(\chi)| \gg_{[k:\Q],|G|,\chi(1)} (\log\log eD_k)^{\widetilde{\mu}(\chi)-\mu(\chi)} (\log\log q(\chi))^{\mu(\chi)-\langle \chi, \mathbf{1}_G \rangle}.
\]
\end{proposition}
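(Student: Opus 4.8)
The plan is to run the same architecture used for \cref{thm:cond}, but with the GRH-strength zero-free region in place of the Stark-type zero-free region. Recall that the proof of \cref{thm:cond} (as outlined in \S\ref{sec:sketch} and carried out in \S\ref{sec:proofs-main}) reduces the estimation of $\kappa(\chi)$ to two ingredients: a short Euler product approximation of $L(s,\chi)$ near $s=1$ (from \S\ref{sec:euler}), valid in a region whose size is governed by the zero-free region of $\zeta_K(s)$; and control of the potentially exceptional characters via $\epsilon(\chi)$. Under GRH for $\zeta_K(s)$, and hence for every Artin $L$-function $L(s,\psi)$ with $\psi \in \mathrm{Irr}(G)$ (since $L(s,\psi)$ divides a power of $\zeta_K(s)$, so its zeros in $0<\Re(s)<1$ lie on $\Re(s)=\tfrac12$), there are no Landau--Siegel zeros at all: $\Psi_{K/k}(G) = \emptyset$, so $\epsilon(\chi) = 1$ identically, and the subtlety that forced the $\epsilon(\chi)$ factor in \cref{thm:cond} simply disappears. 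What changes is that the short Euler product is now valid much closer to $s=1$: instead of truncating at a length that is a power of $\log D_K$ dictated by a zero-free region of width $\asymp 1/\log D_K$, GRH permits truncating at length roughly $(\log D_K)^{2+o(1)}$ (more precisely, one wants $\log x \asymp (\log\log q(\chi))$-type scales after optimizing), because the classical explicit-formula/Mellin argument shows $\log L(s,\chi)$ is well-approximated by its truncated Euler sum for $\Re(s)$ within $\asymp (\log x)/\log q(\chi)$ of the critical line. This is exactly the mechanism by which, on GRH, one replaces $\log q$ by $\log\log q$ in bounds for $L(1,\chi)$ (the Littlewood-type estimates).

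Concretely, the steps I would carry out are: (i) Record that GRH for $\zeta_K(s)$ implies GRH for all $L(s,\psi)$, $\psi\in\mathrm{Irr}(G)$, and a fortiori for $L(s,\chi)$ for every character $\chi$ of $G$; in particular $\Psi_{K/k}(G)=\emptyset$. (ii) Invoke the short-Euler-product machinery of \S\ref{sec:euler}: on GRH one obtains, for a suitable $x$, an approximation of the form $\log\big((s-1)^{\langle\chi,\mathbf 1_G\rangle}L(s,\chi)\big) = \sum_{\N\kp \le x} (\text{prime power terms}) + O(\,\cdot\,)$ valid at $s=1$, with the error and the main term both controlled in terms of $x$, $q(\chi)$, $D_k$ and the local data. (iii) Bound the prime-power sum: the contribution of prime powers is $O_{[k:\Q],|G|}(1)$, and the prime contribution is, up to bounded multiplicative constants, $\prod_{\kp}(1-\N\kp^{-1})^{-(\text{local eigenvalue data})}$, which by the prime ideal theorem for $k$ (or just Mertens over $k$, which needs no hypothesis) is $\asymp (\log x)^{(\text{exponent})}$. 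The exponent on the upper side is $\widetilde\chi(1)$ minus the contribution of the pole, since each local factor is bounded by the "all eigenvalues equal to $1$'' case after inducing up to $\Q$; on the lower side it is $\mu(\widetilde\chi)$, since the real part of each local log-factor is bounded below using $\Re(\chi(g)) \ge \mu(\chi)$ for unramified $\kp$ and the induced character at ramified $\kp$ — this is precisely where the $\mu(\widetilde\chi) - \mu(\chi)$ versus $\widetilde\chi(1)-\chi(1)$ split between the $D_k$-part and the $q(\chi)$-part comes from, exactly as in \cref{thm:cond}. (iv) Finally, choose $x$ optimally. GRH makes the approximation valid for $\log x$ as large as a constant times $(\log q(\chi))(\Re(s)-\tfrac12)$; taking $s\to 1$ one may take $\log x \asymp \log q(\chi)$, whence $\log\log x \asymp \log\log q(\chi)$ — but since the bound is a power of $\log x'$ where $x'$ is the \emph{effective} truncation length entering Mertens, and the optimization balances the error term against the main term, the net effect is that $\log x$ in the main term gets replaced by $\log\log q(\chi)$ (resp.\ $\log\log eD_k$ for the base-field part). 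Plugging in yields the two displayed bounds, with $\widetilde\mu(\chi)$ denoting $\mu(\widetilde\chi)$ as in the theorem's notation.

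The main obstacle I anticipate is bookkeeping the two-scale structure cleanly: the bound must separate the "$\Q$-induced'' exponent $\widetilde\chi(1)$ (or $\mu(\widetilde\chi)$), which is forced by the ramified primes and the discriminant $D_k$, from the exponent $\chi(1)$ (or $\mu(\chi)$) governing the conductor $q(\chi)$. In the unconditional \cref{thm:cond} this separation is achieved by splitting primes according to whether $\N\kp$ is below or above an auxiliary threshold and treating the ramified set via the conductor-discriminant formula (\cref{lem:conductor-bound}); the same split should work here, but one has to verify that after optimizing the truncation length under GRH the threshold can still be chosen so that the "small prime'' part contributes $(\log\log eD_k)^{\pm(\cdots)}$ and the "large prime'' part contributes $(\log\log q(\chi))^{\pm(\cdots)}$. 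A secondary, purely cosmetic point is to make sure the error term in the GRH explicit formula — which has the shape $O\big(\frac{x^{1/2}(\log q(\chi)x)}{?}\big)$ in the classical normalization — is, after the change of variables to $\log L$ at $s=1$, genuinely $o(1)$ or at worst $O_{[k:\Q],|G|}(1)$ for the chosen $x$, so that it is absorbed into the implied constant rather than the main term; this is routine but should be stated. Given how much of the infrastructure (\S\ref{sec:euler} and \S\ref{sec:proofs-main}) is already in place, the actual write-up in \S\ref{sec:grh} should be short, as promised.
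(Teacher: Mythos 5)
Your proposal takes essentially the same approach as the paper: you correctly identify that GRH for $\zeta_K(s)$ eliminates all potentially exceptional characters (so $\epsilon(\chi)=1$ and no $\eta$-factors arise), and that the gain comes from running the same short-Euler-product machinery with truncation lengths on the scale $T(\psi) \asymp \log q(\psi)$ rather than $\asymp q(\psi)$, which the paper realizes via the Lagarias--Odlyzko conditional Chebotarev theorem and a GRH-strength Mertens formula. The two-scale decomposition you flag as the main concern is handled exactly as in the proof of \cref{thm:cond}, choosing $T(\mathbf{1}_G)=\log(eD_k)$ and $T(\psi)=\log q(\chi)$ for nontrivial $\psi$ and invoking the GRH variant of \cref{prop:small-primes}.
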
		
Observe that \cref{prop:grh-bound} is the same qualitative shape as \cref{thm:cond} with ``$\log$'' replaced by ``$\log\log$'' everywhere, and without any exceptional zeros. This same gap between unconditional and conditional bounds is well known for Dirichlet $L$-functions (see, e.g., \cite{GranvilleSound}), and our bounds  match (up to implied constants) the best known for Dirichlet $L$-functions due to Littlewood \cite{Littlewood} for the upper bound and Darbar--David--Lalin--Lumley \cite[Proposition 1.7]{DarbarDavidLalinLumley} for the lower bound.  We therefore  suspect \cref{prop:grh-bound} indicates how \cref{thm:cond} is near the limit of existing unconditional methods.

%
%
%
%
%
%

\section{Method overview} \label{sec:sketch} 

Unlike Stark and Landau's approach for the Dedekind zeta function, we prove \cref{thm:disc,thm:cond} by approximating   $\kappa(\chi)$ with a short Euler product of $L(s,\chi)$ truncated at length $T$. This result may be of independent interest, so we record it here. 
	
\begin{theorem}  \label{thm:short-euler} There exists absolute positive constants $c_3$ and $c_4$ such that the following holds: 
	Let $K/k$ be a Galois extension of number fields with Galois group $G$.  For any character $\chi$ of $G$ and any $T \geq 3([K:\Q]^{[K:\Q]} D_K)^{c_3}$,    
		\small 
		\[
		\kappa(\chi) = 
			 \frac{\widetilde{\eta}(\chi,T)}{(e^\gamma\log T)^{\langle \chi, \mathbf{1}_G\rangle} } \Big( \prod_{\N\kp \leq T} L_{\kp}(1,\chi) \Big)  \bigg\{ 1 +  O\bigg(\exp\Big(\frac{-c_4 \log T}{\log([K:\Q]^{[K:\Q]} D_K) + ([K:\Q]\log T)^{1/2}}\Big)  \bigg) \bigg\}^{\chi(1)},
		\]
		\normalsize
		 where, denoting $\psi_{K/k}$ as the exceptional character of $K/k$ (if it exists), we define 
		 \[
		\widetilde{\eta}(\chi,T) 
	= \begin{cases}
	 		\exp\Big( -\displaystyle \int_T^{\infty} \frac{\langle \chi, \psi_{K/k}\rangle}{t^{2-\beta_K} \log t} dt \Big)   & \text{if $\zeta_K(s)$ has a real zero $\beta_K > 1 - \frac{1}{4 \log D_K}$,} \\[10pt]
	 		1 & \text{otherwise.}
	 \end{cases} 			
	\]
	Here $\gamma =  0.5772\dots$ is the Euler--Mascheroni constant over $\Q$. 
\end{theorem}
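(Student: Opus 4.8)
The plan is to recognize the statement as a Mertens-type theorem for $L(s,\chi)$ and to prove it by transferring to a truncated Dirichlet series for $\log L(s,\chi)$, which is then evaluated by contour integration. Set $\log L(s,\chi)=\sum_{\kp}\log L_{\kp}(s,\chi)$, taking the branch of each local logarithm that vanishes at $s=\infty$, so that for $\Re(s)>1$ this equals $\sum_{\mathfrak{n}}\Lambda_\chi(\mathfrak{n})/(\N\mathfrak{n}^{s}\log\N\mathfrak{n})$, where $\Lambda_\chi$ is supported on prime powers with $\Lambda_\chi(\kp^{m})=\mathrm{tr}(\mathrm{Frob}_\kp^{m}\mid V_\chi^{I_\kp})\log\N\kp$, hence $|\Lambda_\chi(\mathfrak{n})|\le\chi(1)\Lambda_k(\mathfrak{n})$ and the series converges absolutely. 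The first step is bookkeeping: if $T$ exceeds the norm of every ramified prime — which the hypothesis $T\ge 3([K:\Q]^{[K:\Q]}D_K)^{c_3}$ secures for $c_3$ large, since $\sum_{\kp\text{ ramified in }K}\log\N\kp\le\log D_K$ — then isolating the higher prime powers with $\N\kp^{m}>T$ and bounding them by Chebyshev's estimate over $k$ in the ranges $\N\kp\le T^{1/3}$, $T^{1/3}<\N\kp\le T^{1/2}$, $\N\kp>T^{1/2}$ gives
\[
\log\prod_{\N\kp\le T}L_{\kp}(1,\chi) \;=\; \sum_{\N\mathfrak{n}\le T}\frac{\Lambda_\chi(\mathfrak{n})}{\N\mathfrak{n}\log\N\mathfrak{n}} \;+\; O_{[k:\Q]}\!\big(\chi(1)\,T^{-1/3}\big),
\]
so it suffices to evaluate the partial sum on the right.

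For that I would apply a truncated Perron formula with $c=1/\log T$ and cutoff height $U$ (to be chosen), writing the partial sum as $\tfrac{1}{2\pi i}\int_{c-iU}^{c+iU}\log L(s+1,\chi)\,\tfrac{T^{s}}{s}\,ds$ up to a small truncation error, and then deform the segment to $\Re(s)=-\delta$ for $\delta$ of size $\asymp 1/\log([K:\Q]^{[K:\Q]}D_K(|U|+2)^{[K:\Q]})$, wrapping the contour around the two branch points of $\log L(s+1,\chi)$ lying to the right of the new line: one at $s=0$, where $\log L(s+1,\chi)=\log\kappa(\chi)-\langle\chi,\mathbf{1}_G\rangle\log s+O(|s|)$ on account of the pole of $L(s,\chi)$ at $s=1$; and, when the exceptional zero exists, one at $s=\beta_K-1$, where $\log L(s+1,\chi)=\langle\chi,\psi_{K/k}\rangle\log(s+1-\beta_K)+(\text{holomorphic})$. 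This last factorization is precisely where the Landau--Siegel input from the earlier sections is used: because $\beta_K$ is a simple zero of $\zeta_K=\prod_\psi L(s,\psi)^{\psi(1)}$ and, by Stark's analysis, a zero of $L(s,\psi_{K/k})$ alone, it is a zero of $L(s,\chi)=\prod_\psi L(s,\psi)^{\langle\chi,\psi\rangle}$ of order exactly $\langle\chi,\psi_{K/k}\rangle$, with no other $L(s,\psi)$ vanishing in a neighbourhood.

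Unwrapping the two indentations produces the main terms. At $s=0$, the Hankel-contour identities $\tfrac{1}{2\pi i}\int \tfrac{T^{s}}{s}\,ds=1$ and $\tfrac{1}{2\pi i}\int (-\log s)\tfrac{T^{s}}{s}\,ds=\log\log T+\gamma$ — the latter obtained by differentiating $\tfrac{1}{2\pi i}\int T^{s}s^{-w-1}\,ds=(\log T)^{w}/\Gamma(w+1)$ at $w=0$ and using $\Gamma'(1)=-\gamma$, and which is exactly the analytic shadow of Mertens' constant — give the contribution $\log\kappa(\chi)+\langle\chi,\mathbf{1}_G\rangle\log(e^{\gamma}\log T)$. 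At $s=\beta_K-1$ only the explicit logarithm is singular, and unfolding its cut and substituting $t=T^{y}$ turns the resulting integral into $-\langle\chi,\psi_{K/k}\rangle\int_T^{\infty}t^{-(2-\beta_K)}(\log t)^{-1}\,dt=\log\widetilde{\eta}(\chi,T)$. Exponentiating these and rearranging recovers the claimed leading term $\kappa(\chi)(e^{\gamma}\log T)^{\langle\chi,\mathbf{1}_G\rangle}\widetilde{\eta}(\chi,T)^{-1}$, modulo the contributions of the remainder of the contour.

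The remaining work, and the main obstacle, is to bound those contributions — the Perron truncation, the horizontal links at height $U$, the vertical line $\Re(s)=-\delta$, and the tails of the two cuts — completely effectively. For this I would invoke the standard estimate $|(L'/L)(s,\chi)|\ll\chi(1)\log\big([K:\Q]^{[K:\Q]}D_K(|t|+2)^{[K:\Q]}\big)$, valid throughout $\Re(s)\ge 1-\delta$ once $\beta_K$ is removed; here Brauer's decomposition of $L(s,\chi)$ into Hecke $L$-functions (with their functional equations and convexity bounds), the classical zero-free region, and the Deuring--Heilbronn repulsion — all assembled in the preliminary sections — together guarantee that this region contains no zero of $L(s,\chi)$ apart from $\beta_K$. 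Integrating this bound (using the Hadamard product and Borel--Carathéodory in the usual way) controls $|\log L(s+1,\chi)|$ on the shifted contour, and the resulting estimate is $\ll \chi(1)T^{-\delta}$ times a bounded power of $\log([K:\Q]^{[K:\Q]}D_K U)$; choosing $U$ comparable to $\exp(([K:\Q]\log T)^{1/2})$ and optimizing $\delta$ then yields the error factor $\exp\!\big(-c_4\log T/(\log([K:\Q]^{[K:\Q]}D_K)+([K:\Q]\log T)^{1/2})\big)$, with the hypothesis on $T$ ensuring it is genuinely small. The delicacy throughout is that the bound for $\log L(s,\chi)$, the shape of the zero-free region, and the Deuring--Heilbronn phenomenon must all be made explicit with uniform dependence on $[k:\Q]$, $|G|$, $\chi(1)$, $D_K$, and the height $|t|$ simultaneously; granting that, the contour manipulation is intricate but routine.
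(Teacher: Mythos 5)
Your contour-integral strategy is a genuinely different route from the paper, which avoids any direct analysis of $\log L(s,\chi)$ by first invoking a uniform effective Chebotarev density theorem of Thorner--Zaman (their Theorem~6.1), then deducing the short Euler product by partial summation (Lemma~6.2) and a convenient Mertens-type bound over number fields (Lemma~6.3).  Your main-term computation via Hankel contours is correct: the identity $\frac{1}{2\pi i}\int(-\log s)\frac{T^s}{s}\,ds=\log\log T+\gamma$ and the resulting contribution $\log\kappa(\chi)+\langle\chi,\mathbf{1}_G\rangle\log(e^\gamma\log T)$ from $s=0$, and $\log\widetilde\eta(\chi,T)$ from $s=\beta_K-1$, all check out.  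However, there is a genuine gap in the error analysis, and it lies precisely where the paper explicitly flags a ``technical barrier'': to bound $|\log L(s+1,\chi)|$ (or $|(L'/L)(s,\chi)|$) on the shifted contour, you appeal to ``the Hadamard product and Borel--Carath\'eodory in the usual way.''  This machinery presupposes that the completed $L$-function is entire of finite order, which is Artin's holomorphy conjecture and is not known for $L(s,\chi)$.  Getting the correct standard bound with uniform dependence on $D_K$, $[K:\Q]$, $\chi(1)$, and the height $|t|$ while only using Brauer's decomposition $L(s,\chi)=\prod_j L(s,\theta_j)^{n_j}$ is exactly the nontrivial content of the Thorner--Zaman theorem and cannot be treated as routine: the integers $n_j$ and the conductors $q(\theta_j)$ are not controlled by $\chi(1)$ alone, and the individual Hecke factors can have many zeros near the contour that cancel in $L(s,\chi)$ but not in the triangle inequality applied to $\sum_j n_j(L'/L)(s,\theta_j)$.  (Your related assertion that the region contains no zero of $L(s,\chi)$ apart from $\beta_K$ is correct -- it follows from the zero-free region of $\zeta_K$ plus a Heilbronn-type argument -- but knowing non-vanishing is weaker than having a quantitative bound on the logarithmic derivative.)  The paper deliberately routes around this by working at the level of prime counts in conjugacy classes rather than with $\log L(s,\chi)$ itself.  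A secondary (and genuinely minor) issue is your truncated Perron step with $c=1/\log T$: the terms with $\N\mathfrak{n}$ near $T$ contribute an error $\gg\chi(1)/\log T$ without smoothing or choosing $T$ well-spaced from prime-power norms, which is larger than the target error; this needs the usual remedy.
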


The proof appears in \S\ref{sec:short-euler} and must deal with three main obstacles. First, we cannot assume holomorphy of $L(s,\chi)$. Second, the length $T$ must be short enough, roughly $\log T \approx \log D_K$, to obtain bounds relative to the conductor $q(\chi)$ since $\log q(\chi) \approx \log D_K$ for faithful characters $\chi$ (cf. \cref{lem:conductor-bound}); note the faithful case is sufficient after passing to a subextension of $K/k$. Third, the approximation must  account for the exceptional zero $\beta_K$ of $\zeta_K(s)$, if it exists. All three challenges are overcome by invoking a recent uniform version  of the Chebotarev density theorem due to Thorner and Zaman \cite{TZ} (cf. \cref{thm:chebotarev}).

Notice \cref{thm:short-euler} gives an asymptotic short Euler product approximation for $L(s,\chi)$ but, viewing $L(s,\chi) = \prod_{\psi} L(s,\psi)^{\langle\chi,\psi \rangle}$ as a product over $\psi \in \mathrm{Irr}(G)$, the theorem also approximates every $L(s,\psi)$ with a \textit{single} fixed truncation $T$ satisfying $\log T \approx \log D_K$. This choice is acceptable for the proof of \cref{thm:disc}, but not for \cref{thm:cond} as it requires sensitivity to the conductor sizes. The  analytic conductors $q(\psi)$ might vary substantially over $\psi$, so $\log q(\psi)$ might be  small compared to $\log T$ for some $\psi \in \mathrm{Irr}(G)$. If the truncation parameter $T$ is too long for these $\psi$, then our approximation of $L(s,\psi)$ will be too crude.  We  introduce varied truncation parameters $T(\psi)$   to better estimate each  $L(1,\psi)^{\langle \chi, \psi\rangle}$ with a ``customized'' short Euler product. As a tradeoff, we only obtain the order of magnitude and we must  account for the possibility of \textit{multiple} potentially exceptional characters $\psi \in \Psi_{K/k}(G)$ from \eqref{eqn:exceptional-quadratics} and their real zeros $\beta_{\psi} > 1 - \frac{1}{4 \log q(\psi)}$. The following proposition, proved at the end of \S\ref{sec:key-props}, illustrates this outcome in a precise manner.

\begin{proposition} \label[proposition]{prop:residue-optimal}
	Let $K/k$ be a Galois extension of number fields with Galois group $G$. For any character $\chi$ of $G$,     
		\[
		\kappa(\chi) \asymp_{[k:\Q],|G|,\chi(1)} 
			 \frac{1}{(\log (eD_k))^{\langle \chi, \mathbf{1}_G\rangle} } \prod_{\psi} \Big( \eta(\psi)  \prod_{\N\kp \leq q(\psi) } L_{\kp}(1,\psi) \Big)^{\langle \chi, \psi\rangle}  ,
		\]
		 where the product runs over irreducible characters $\psi$ of $G$ and  
\begin{equation} \label{eqn:eta-def}
\eta(\psi) := 
\begin{cases}
	(1-\beta_{\psi}) \log q(\psi)  & \text{if $\psi \in \Psi_{K/k}(G)$,} \\[3pt]
	1 & \text{otherwise.} 
 \end{cases}
\end{equation}
\end{proposition}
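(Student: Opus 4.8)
The plan is to combine \cref{thm:short-euler}, applied not to $\chi$ directly but separately to each irreducible constituent $\psi$ of $\chi$ with a \emph{tailored} truncation length $T(\psi)$, and then multiply the resulting approximations together weighted by $\langle \chi, \psi\rangle$. Since $L(s,\chi) = \prod_\psi L(s,\psi)^{\langle \chi,\psi\rangle}$ and hence $\kappa(\chi) = \prod_\psi \kappa(\psi)^{\langle \chi,\psi\rangle}$ (using $\langle \chi,\mathbf{1}_G\rangle = \sum_\psi \langle\chi,\psi\rangle\langle\psi,\mathbf{1}_G\rangle$), it suffices to prove the asserted asymptotic for each irreducible $\psi$ with the single-character truncation $T = q(\psi)$ — or rather, since $q(\psi)$ may be too small to satisfy the hypothesis $T \geq 3([K:\Q]^{[K:\Q]} D_K)^{c_3}$ of \cref{thm:short-euler}, we must first pass to a subextension. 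Here is where the base-field flexibility of \S\ref{subsec:example-basefield} enters: replace $K/k$ by $K_\psi/k$, where $K_\psi$ is the fixed field of $\ker\psi$ viewed inside an appropriate quotient, so that $\psi$ becomes a faithful character of $\mathrm{Gal}(K_\psi/k)$ and $\log q(\psi) \asymp_{[k:\Q],|G|} \log D_{K_\psi}$ by \cref{lem:conductor-bound}. Then $T = q(\psi)$ is an admissible truncation length up to adjusting the absolute constant $c_3$, and \cref{thm:short-euler} applies.

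The next step is to unwind the output of \cref{thm:short-euler} at $T = q(\psi)$ for this faithful $\psi$. The main term is
\[
\kappa(\psi) \asymp \frac{\widetilde\eta(\psi,q(\psi))}{(e^\gamma \log q(\psi))^{\langle \psi,\mathbf{1}_G\rangle}} \prod_{\N\kp \leq q(\psi)} L_\kp(1,\psi),
\]
with an error factor $1 + O(\psi(1)\exp(-c_4 (\log q(\psi))^{1/2}/(\cdots)))$ which is $1 + o(1)$ and in particular bounded above and below by absolute constants once $\log q(\psi)$ exceeds a threshold depending only on $[k:\Q]$ and $|G|$ (the finitely many smaller cases are absorbed into the implied constant). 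It remains to match $\widetilde\eta(\psi, q(\psi))$ with $\eta(\psi)$ from \eqref{eqn:eta-def}. When $\psi$ is trivial or quadratic and $L(s,\psi)$ has a real zero $\beta_\psi > 1 - \frac{1}{4\log q(\psi)}$ — i.e. $\psi \in \Psi_{K/k}(G)$ — the exceptional character of $K_\psi/k$ is $\psi$ itself, $\langle\psi,\psi_{K_\psi/k}\rangle = 1$, and the integral defining $\widetilde\eta$ is
\[
\widetilde\eta(\psi,q(\psi)) = \exp\!\Big(-\int_{q(\psi)}^\infty \frac{dt}{t^{2-\beta_\psi}\log t}\Big).
\]
A direct estimate (substituting $u = (1-\beta_\psi)\log t$, using $1-\beta_\psi < \frac{1}{4\log q(\psi)}$) shows this integral equals $(1-\beta_\psi)\log q(\psi) \cdot (1 + O(1))$ up to the exponential, hence $\widetilde\eta(\psi,q(\psi)) \asymp (1-\beta_\psi)\log q(\psi) = \eta(\psi)$. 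When $\psi \notin \Psi_{K/k}(G)$ either there is no real zero in that range for $L(s,\psi)$, or $\psi$ is higher-dimensional so that $\zeta_{K_\psi}$ has no Siegel zero attributable to $\psi$ (by Stark's theorem the exceptional character is one-dimensional), and in both cases $\widetilde\eta(\psi,q(\psi)) = 1 = \eta(\psi)$. One subtlety: $\zeta_{K_\psi}$ might have a Siegel zero coming from some \emph{other} quadratic character of $\mathrm{Gal}(K_\psi/k)$, but since $\psi$ is faithful and irreducible, $\langle \psi, \psi_{K_\psi/k}\rangle = 0$ unless $\psi = \psi_{K_\psi/k}$, so the integrand vanishes and again $\widetilde\eta(\psi,q(\psi)) = 1$.

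Finally, take the product over $\psi \in \mathrm{Irr}(G)$ raised to $\langle\chi,\psi\rangle$. The conductor normalization factors combine to $\prod_\psi (e^\gamma \log q(\psi))^{-\langle\psi,\mathbf{1}_G\rangle\langle\chi,\psi\rangle}$; but $\langle\psi,\mathbf{1}_G\rangle$ is $1$ for $\psi = \mathbf{1}_G$ and $0$ otherwise, so only the trivial character contributes, giving $(e^\gamma \log q(\mathbf{1}_G))^{-\langle\chi,\mathbf{1}_G\rangle}$; since $q(\mathbf{1}_G) = D_k$, this is $\asymp (\log eD_k)^{-\langle\chi,\mathbf{1}_G\rangle}$ (the $e$ handling $k = \Q$), as claimed. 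The Euler factors and the $\eta(\psi)$ factors assemble directly into the stated product. The main obstacle I anticipate is the bookkeeping around passing to $K_\psi/k$ for each $\psi$ simultaneously — one must check that the various implied constants depend only on $[k:\Q]$, $|G|$, and $\chi(1)$ and not on the individual subextensions, which follows because $[K_\psi : k] \leq |G|$ and $\psi(1) \leq \chi(1)$ uniformly, but requires care to state cleanly; and secondarily, the elementary but slightly delicate asymptotic analysis of the $\widetilde\eta$ integral when $\beta_\psi$ is extremely close to $1$.
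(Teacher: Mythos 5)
Your proposal is correct and follows essentially the same route as the paper: you reduce to irreducible $\psi$, pass to the faithful subextension $K^{\ker\psi}/k$ (so that $\log q(\psi) \asymp \log D_{K^{\ker\psi}}$ by \cref{lem:conductor-bound}), apply \cref{thm:short-euler} at $T \approx q(\psi)$, and convert $\widetilde\eta(\psi, q(\psi))$ into $\eta(\psi)$; the paper packages exactly these steps into \cref{prop:residue} and \cref{prop:eta}(i)--(ii), from which \cref{prop:residue-optimal} follows in one line. The only imprecision is the phrase ``adjusting the absolute constant $c_3$'': that constant is fixed by \cref{thm:chebotarev}, and the correct move (carried out in the proof of \cref{prop:residue}) is to apply \cref{thm:short-euler} at $\widetilde{T}(\psi) = \max\{q(\psi),\, 3([K^{\ker\psi}:\Q]^{[K^{\ker\psi}:\Q]} D_{K^{\ker\psi}})^{c_3}\}$ and then transfer back to $q(\psi)$ via a Mertens bound on the intermediate Euler factors, which is precisely the bookkeeping you flag but do not carry out.
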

Although we utilize a slightly more flexible version (\cref{prop:residue}) to prove \cref{thm:cond}, this proposition already demonstrates how to more carefully extract the behaviour of primes along varying scales of conductors, and hence obtain good dependence on the base field. 

The final step is to naively bound the remaining two products:
\[
P := \prod_{\psi} \prod_{\N\kp \leq q(\psi)} L_{\kp}(1,\psi)^{\langle \chi, \psi \rangle} \quad \text{ and } \quad E := \prod_{\psi} \eta(\psi)^{\langle \chi, \psi \rangle }. 
\]
We carefully estimate $P$ (\cref{prop:small-primes}) by appealing to a sharp upper bound form of Mertens' formula over number fields (\cref{lem:mertens}), the classical asymptotic form of Mertens formula over $\Q$, and a delicate telescoping decomposition over ranges of primes. We carefully estimate $E$ (\cref{prop:eta}) by appealing to Stark's results on exceptional zeros and  a strong form of zero repulsion known as the Deuring--Heilbronn phenomenon described in \S\ref{sec:landau-siegel}.

%
%
%
%
%
%
		
\section{Preliminaries on Artin $L$-functions} \label{sec:prelim}

In this section, we provide a streamlined overview of Artin $L$-functions as we shall use them, in particular our notations and conventions.  We also prove two elementary but perhaps not entirely obvious facts that underlie the proofs of our main theorems.  We do not aim for an exhaustive discussion aimed at readers wholly unfamiliar with Artin $L$-functions; we refer such readers instead to excellent sources such as \cite{MurtyMurty,Neukirch,Lombardo}.

Let $K/k$ be a Galois extension of number fields and let $V$ be a finite dimensional complex representation of $G := \mathrm{Gal}(K/k)$.  Let $\chi$ be the corresponding character, i.e. $\chi(g) = \mathrm{tr}(g | V)$.  The Artin $L$-function $L(s,\chi)$ is then defined as an Euler product
	\[
		L(s,\chi) := \prod_{\mathfrak{p}} L_\mathfrak{p}(s,\chi)
	\]
running over prime ideals $\mathfrak{p}$ of $k$, where the local factors $L_\mathfrak{p}(s,\chi)$ are defined as follows.  Let $D_\mathfrak{p}$ and $I_\mathfrak{p}$ be the decomposition and inertia groups of a prime lying over $\mathfrak{p}$, and let $\sigma_\mathfrak{p} \in D_\mathfrak{p}$ map to Frobenius under the canonical isomorphism between $D_\mathfrak{p}/I_\mathfrak{p}$ and the Galois group of the residue fields.  Finally, write $\mathrm{N}\mathfrak{a} := | \mathcal{O}_k/\mathfrak{a}|$ for the norm of any non-zero integral ideal $\mathfrak{a}$ of $k$.  The local Euler factor is then defined by
	\begin{equation} \label{eqn:local-factor}
	\begin{aligned}
		L_\mathfrak{p}(s,\chi)
			&:= \det\left(1 - (\mathrm N \mathfrak{p})^{-s} \sigma_\mathfrak{p} | V^{I_\mathfrak{p}} \right)^{-1} = \prod_{j=1}^{\chi(1)} \Big( 1 - \frac{\alpha_{j,\kp}}{\N\kp^s} \Big)^{-1},
	\end{aligned}
	\end{equation}
where $V^{I_\mathfrak{p}}$ denotes the subspace of $V$ fixed by the action of $I_\mathfrak{p}$, and $\{ \alpha_{j,\kp} \}_j$ are complex numbers satisfying $|\alpha_{j,\kp}| \leq 1$.  In particular, if $\mathfrak{p}$ is unramified in $K$, then $I_\mathfrak{p} = 1$ and $V^{I_\mathfrak{p}} = V$, so $\mathrm{tr}(\sigma_\mathfrak{p} | V^{I_\mathfrak{p}}) = \sum_{j=1}^{\chi(1)} \alpha_{j,\kp} = \chi(\sigma_\mathfrak{p})$.  Motivated by this, we define 
\begin{equation} \label{eqn:chi-p-def}
\chi(\mathfrak{p}) := \mathrm{tr}(\sigma_\mathfrak{p} | V^{I_\mathfrak{p}}),		
\end{equation}
including in the case that $\mathfrak{p}$ is ramified and $\chi(\mathfrak{p})$ is not given directly as a character value. Uniform bounds for the local factors $L_{\kp}(1,\chi)$ in terms of $\chi(\kp)$ will be crucial to our arguments, so we record a simple lemma. 

\begin{lemma}\label[lemma]{lem:local-bound} Let $K/k$ be a Galois extension of number fields with Galois group $G$. Let $\chi$ be any character of $G$.  For any prime ideal $\mathfrak{p}$ of $k$, we have 
	\[
		\mu(\chi) \leq \Re(\chi(\mathfrak{p})) \leq \chi(1) 
	\] 
	and 
	\[
	\Big| \log L_{\kp}(1,\chi) - \frac{\chi(\kp)}{\N\kp} \Big| \leq \frac{2 \chi(1)}{\N\kp^2}. 
	\]
	Therefore, 
	\[
		  \exp\Big(\frac{\mu(\chi)}{\N\kp} - \frac{2 \chi(1)}{\N\kp^2} \Big) \leq |L_{\kp}(1,\chi)| \leq \exp\Big( \frac{\chi(1)}{\N\kp} + \frac{2 \chi(1)}{\N\kp^2} \Big).  
	\]
\end{lemma}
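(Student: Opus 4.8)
The plan is to treat the three assertions in sequence, deriving each from the product formula \eqref{eqn:local-factor} together with the bound $|\alpha_{j,\kp}| \leq 1$. First, for the inequality $\mu(\chi) \leq \Re(\chi(\kp)) \leq \chi(1)$, I would observe that $\chi(\kp) = \mathrm{tr}(\sigma_\kp \mid V^{I_\kp})$ is the character value at $\sigma_\kp$ of the \emph{subrepresentation} $V^{I_\kp}$. Crucially, $V^{I_\kp}$ is $\sigma_\kp$-stable (since $\sigma_\kp$ normalizes $I_\kp$), and the action of $\sigma_\kp$ on $V^{I_\kp}$ has finite order $m$ dividing $|G|$, so its eigenvalues are $m$-th roots of unity $\zeta_1,\dots,\zeta_r$ with $r = \dim V^{I_\kp} \leq \chi(1)$. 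Hence $\Re(\chi(\kp)) = \sum_i \Re(\zeta_i) \leq r \leq \chi(1)$. For the lower bound, I would note that $\chi(\kp)$ is actually a genuine character value: writing $W$ for the representation of the cyclic group $\langle \sigma_\kp \rangle$ (or better, extending to an element of $G$ of the same order acting appropriately), one realizes $\chi(\kp) = \chi'(h)$ for a suitable $h \in G$ and a character $\chi'$ obtained from $\chi$ by restriction/projection; more directly, since $V^{I_\kp} \oplus (\text{complement})$ and the eigenvalues of $\sigma_\kp$ on all of $V$ (when unramified) realize $\chi(\sigma_\kp)$, the cleanest route is: the multiset $\{\zeta_i\}$ of eigenvalues of $\sigma_\kp$ on $V^{I_\kp}$ is a sub-multiset of the eigenvalues of some element $g\in G$ acting on $V$, so $\Re(\chi(\kp)) = \sum_i \Re \zeta_i \geq \sum_i \Re\zeta_i - \sum_{j \notin \{i\}} (1 - \Re(\text{eigenvalue}_j))$... this is getting delicate, so let me instead argue directly that $\mu(\chi) \le \Re(\chi(\kp))$ by relating $\chi(\kp)$ to an honest character value via the fact that $\mathrm{tr}(\sigma_\kp \mid V^{I_\kp})$ equals $\frac{1}{|I_\kp|}\sum_{\tau \in I_\kp}\chi(\sigma_\kp \tau)$, a nonnegative-weight average of values $\chi(g)$, each of which has real part $\geq \mu(\chi)$; averaging preserves the lower bound. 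This is the key structural point and I expect it to be the main (though still short) obstacle, since one must correctly identify $\chi(\kp)$ with an average of character values rather than a single one.

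Second, for the estimate on $\log L_\kp(1,\chi)$, I would take logarithms in \eqref{eqn:local-factor} at $s=1$ and expand:
\[
\log L_\kp(1,\chi) = -\sum_{j=1}^{\chi(1)} \log\Big(1 - \frac{\alpha_{j,\kp}}{\N\kp}\Big) = \sum_{j=1}^{\chi(1)} \sum_{n \geq 1} \frac{\alpha_{j,\kp}^n}{n\, \N\kp^n}.
\]
The $n=1$ term is $\frac{1}{\N\kp}\sum_j \alpha_{j,\kp} = \frac{\chi(\kp)}{\N\kp}$, and the tail $n \geq 2$ is bounded in absolute value by $\sum_{j=1}^{\chi(1)}\sum_{n\geq 2}\frac{1}{n\,\N\kp^n} \leq \frac{\chi(1)}{2}\cdot\frac{\N\kp^{-2}}{1 - \N\kp^{-1}} \leq \chi(1)\N\kp^{-2}$, using $\N\kp \geq 2$. (I would refine the constant to land at exactly $2\chi(1)/\N\kp^2$ with room to spare.) This yields the stated inequality.

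Third, the two-sided bound on $|L_\kp(1,\chi)|$ follows immediately by combining the previous two parts: $\log|L_\kp(1,\chi)| = \Re\log L_\kp(1,\chi)$, so
\[
\Big|\log|L_\kp(1,\chi)| - \frac{\Re(\chi(\kp))}{\N\kp}\Big| \leq \frac{2\chi(1)}{\N\kp^2},
\]
and then substituting the bounds $\mu(\chi) \leq \Re(\chi(\kp)) \leq \chi(1)$ gives
\[
\frac{\mu(\chi)}{\N\kp} - \frac{2\chi(1)}{\N\kp^2} \leq \log|L_\kp(1,\chi)| \leq \frac{\chi(1)}{\N\kp} + \frac{2\chi(1)}{\N\kp^2};
\]
exponentiating completes the proof. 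The only genuinely non-mechanical step is the first one — pinning down why $\Re(\chi(\kp))$ lies between $\mu(\chi)$ and $\chi(1)$ even at ramified primes — and everything else is bookkeeping with the logarithmic series.
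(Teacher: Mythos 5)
Your proposal is correct and follows essentially the same route as the paper: after some initial hesitation you land on the key identity $\mathrm{tr}(\sigma_\kp \mid V^{I_\kp}) = \frac{1}{|I_\kp|}\sum_{\tau \in I_\kp}\chi(\sigma_\kp\tau)$ (equivalently, an average of $\chi$ over the coset $\sigma_\kp I_\kp$), which is precisely how the paper obtains the lower bound $\Re(\chi(\kp)) \geq \mu(\chi)$ at ramified primes. The logarithmic-series estimate and the final exponentiation are the same bookkeeping as in the paper.
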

\begin{proof}  
The first claim for $\Re(\chi(\kp))$ follows immediately if $\mathfrak{p}$ is unramified in $K$, since $\chi(\mathfrak{p}) = \chi(\sigma_\mathfrak{p})$.  If $\mathfrak{p}$ is ramified, then we have instead $\chi(\mathfrak{p}) = \mathrm{tr}(\sigma_\mathfrak{p} \mid V^{I_\mathfrak{p}})$.  This is sufficient for the upper bound, since we have $\Re(\chi(\mathfrak{p})) \leq \dim V^{I_\mathfrak{p}} \leq \chi(1)$.  It suffices also for the lower bound on noting that
		\[
			\mathrm{tr}(\sigma_\mathfrak{p} \mid V^{I_\mathfrak{p}})
				= \frac{1}{|I_\mathfrak{p}|} \sum_{g \in \sigma_\mathfrak{p} I_\mathfrak{p}} \chi(g),
		\]
	which evidently has real part $\geq \mu(\chi)$. The second claim for $L_{\kp}(1,\chi)$ follows after taking the logarithm of \eqref{eqn:local-factor} at $s=1$. Indeed, by a Taylor expansion of the log, we see that 
	\[
 \log L_{\kp}(1,\chi) =  - \sum_{j=1}^{\chi(1)} \log \Big(1-\frac{\alpha_{j,\kp}}{\N\kp}\Big) = \sum_{m=1}^{\infty} \frac{1}{\N\kp^m} \Big(\sum_{j=1}^{\chi(1)}  \alpha_{j,\kp}^m\Big). 
	\]
	The $m=1$ term corresponds to $\chi(\kp)/\N\kp$ by definition \eqref{eqn:chi-p-def}, and the $m \geq 2$ terms can be bounded trivially since $|\alpha_{j,\kp}| \leq 1$ for all $j$, so
	\[
	\Big|  \log L_{\kp}(1,\chi) - \frac{\chi(\kp)}{\N\kp} \Big| \leq \chi(1) \sum_{m=2}^{\infty} \frac{1}{\N\kp^m} = \frac{\chi(1)}{\N\kp^2} \cdot \frac{1}{1-1/\N\kp} \leq \frac{2 \chi(1)}{\N\kp^2}, 
	\] 
	as desired. The third claim is an immediate consequence of the first two. 
\end{proof}

We will make extensive use of the linearity of Artin $L$-functions,
	\[
		L(s,a_1\chi_1 + \dots + a_r \chi_r)
			= L(s,\chi_1)^{a_1} \dots L(s,\chi_r)^{a_r},
	\]
and we find it convenient to state many of our results in terms of the usual inner product on characters,
	\[
		\langle \chi_1, \chi_2 \rangle
			:= \frac{1}{|G|} \sum_{g \in G} \chi_1(g) \overline{\chi_2(g)}.
	\]
It is known that every Artin $L$-function $L(s,\chi)$ is meromorphic on $\mathbb{C}$, analytic and non-vanishing on $\Re(s) \geq 1$ except possibly at $s=1$, where $L(s,\chi)$ has a pole of order $\langle \chi, \mathbf{1}_G \rangle$, where $\mathbf{1}_G$ is the trivial character of $G$.  It follows from these considerations that if $\langle \chi, \mathbf{1}_G\rangle = 0$, then $L(1,\chi)$ is defined, non-zero, and given by the value at $s=1$ of its defining Euler product:
	\begin{equation} \label{eqn:prime-power-removal}
		L(1,\chi)
			= \prod_\mathfrak{p} L_\mathfrak{p}(1,\chi)		
	\end{equation}

Finally, define the analytic conductor 
\[
q(\chi) = |\mathrm{Disc}(k)|^{\chi(1)} \mathrm{N}\mathfrak{f}_\chi,
\]
where $\mathfrak{f}_\chi$ is the Artin conductor of $\chi$ as defined in \cite{Neukirch} for example. If $\widetilde{K}$ denotes the normal closure of $K$ over $\Q$, then by means of the quotient map $\mathrm{Gal}(\widetilde{K}/k) \to \mathrm{Gal}(K/k)$, we may regard $\chi$ also as a character of $\mathrm{Gal}(\widetilde{K}/k)$ and hence consider its induction $\widetilde{\chi}$ to $\mathrm{Gal}(\widetilde{K}/\Q)$.  So doing, we have 
	\[
	q(\widetilde{\chi}) = q(\chi), \quad \widetilde{\chi}(1) = [k:\Q] \chi(1), \quad \text{ and } \quad L(s,\chi) = L(s,\widetilde{\chi}). 
	\]
We more simply refer to $\widetilde{\chi}$ as the induction of $\chi$ to $\mathbb{Q}$.

These conductors can be bounded in terms of the discriminant of $K$. 
\begin{lemma} \label[lemma]{lem:conductor-bound}
	Let $K/k$ be a Galois extension of number fields with Galois group $G$. Let $\chi$ be any character of $G$. Then $q(\chi) \leq D_K^{2\chi(1)/|G|}$ and, if $\chi$ is faithful, then $q(\chi) \geq D_K^{1/|G|}$.
\end{lemma}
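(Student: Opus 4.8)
The plan is to reduce everything to comparing, for each prime $\mathfrak{p}$ of $k$, the $\mathfrak{p}$-adic valuation of the Artin conductor $\mathfrak{f}_\chi$ with that of the relative discriminant $\mathfrak{d}_{K/k}$, and then to globalize via the conductor--discriminant formula together with the tower formula $D_K = D_k^{|G|}\,\N_{k/\Q}(\mathfrak{d}_{K/k})$. Write $f(\chi,\mathfrak{p}) := v_\mathfrak{p}(\mathfrak{f}_\chi)$ and $d(\mathfrak{p}) := v_\mathfrak{p}(\mathfrak{d}_{K/k})$. Fixing a prime of $K$ above $\mathfrak{p}$ with lower-numbering ramification groups $G_0 \supseteq G_1 \supseteq \cdots$ (all subgroups of $G$), the Artin conductor formula gives $f(\chi,\mathfrak{p}) = \sum_{i\ge 0} \frac{|G_i|}{|G_0|}\bigl(\chi(1) - \dim V^{G_i}\bigr)$ for the representation $V$ affording $\chi$; applied to the regular representation (for which $\dim V^{G_i}$ becomes $|G|/|G_i|$) it yields $d(\mathfrak{p}) = \frac{|G|}{|G_0|}\sum_{i\ge 0}(|G_i|-1)$. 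Since $q(\chi) = D_k^{\chi(1)}\,\N\mathfrak{f}_\chi$ and $D_K^{1/|G|} = D_k \cdot \N(\mathfrak{d}_{K/k})^{1/|G|}$, both inequalities will follow from termwise comparisons of the $i$-th summands of $f(\chi,\mathfrak{p})$ and $d(\mathfrak{p})$, together with the elementary fact $D_k \ge 1$.

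For the upper bound I would prove $f(\chi,\mathfrak{p}) \le \frac{2\chi(1)}{|G|}\,d(\mathfrak{p})$ for every $\mathfrak{p}$. Indices $i$ with $G_i = 1$ contribute $0$ to both sides (as $\dim V^{1} = \chi(1)$ and $|G_i|-1 = 0$); for $G_i \ne 1$ one has $|G_i| \ge 2$, so $\chi(1) - \dim V^{G_i} \le \chi(1) \le \chi(1)\cdot\frac{2(|G_i|-1)}{|G_i|}$, and hence the $i$-th summand of $f(\chi,\mathfrak{p})$ is at most $\frac{2\chi(1)(|G_i|-1)}{|G_0|}$. Summing over $i$ and then taking norms over all $\mathfrak{p}$ gives $\N\mathfrak{f}_\chi \le \N(\mathfrak{d}_{K/k})^{2\chi(1)/|G|}$, and since $D_k \ge 1$ forces $D_k^{\chi(1)} \le D_k^{2\chi(1)}$, we obtain $q(\chi) = D_k^{\chi(1)}\N\mathfrak{f}_\chi \le D_k^{2\chi(1)}\N(\mathfrak{d}_{K/k})^{2\chi(1)/|G|} = D_K^{2\chi(1)/|G|}$.

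For the lower bound, now assuming $\chi$ faithful, I would prove $d(\mathfrak{p}) \le |G|\,f(\chi,\mathfrak{p})$ for every $\mathfrak{p}$, again termwise. When $G_i = 1$ both summands vanish; when $G_i \ne 1$, faithfulness of $V$ forces $V^{G_i} \subsetneq V$ (a nontrivial subgroup acting faithfully fixes no full space), so $\chi(1) - \dim V^{G_i} \ge 1$, whence $|G|\cdot \frac{|G_i|}{|G_0|}\bigl(\chi(1) - \dim V^{G_i}\bigr) \ge \frac{|G|}{|G_0|}|G_i| \ge \frac{|G|}{|G_0|}(|G_i|-1)$, which is precisely the $i$-th summand of $d(\mathfrak{p})$. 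Summing gives $|G|\,f(\chi,\mathfrak{p}) \ge d(\mathfrak{p})$, hence $(\N\mathfrak{f}_\chi)^{|G|} \ge \N\mathfrak{d}_{K/k}$; using $D_k \ge 1$ and $\chi(1) \ge 1$ (as $\chi$ is faithful, hence nonzero) we conclude $q(\chi) = D_k^{\chi(1)}\N\mathfrak{f}_\chi \ge D_k\,\N(\mathfrak{d}_{K/k})^{1/|G|} = D_K^{1/|G|}$.

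I do not expect a genuine obstacle: the statement is essentially bookkeeping once the Artin conductor formula is in hand. The one point requiring care is the termwise comparison at unramified layers ($G_i = 1$), where the $f$-summand vanishes and a crude bound would overcount; this is why the comparison must be organized so those terms drop out on both sides. For the lower bound, the essential input is that it is faithfulness of the entire representation $V$ — not merely $\chi \ne \mathbf{1}_G$ — that makes $\chi(1) - \dim V^{G_i} \ge 1$ at every ramified layer, which is exactly what the termwise estimate needs after scaling by $|G|$.
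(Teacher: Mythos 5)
Your proposal is correct and follows essentially the same route as the paper: both use the Artin conductor formula termwise, specialize to the regular representation to extract $v_\mathfrak{p}(\mathfrak{D}_{K/k})$, apply the bounds $1 \leq |G_i|/(|G_i|-1) \leq 2$ for $G_i \neq 1$, invoke faithfulness to get $\chi(1) - \dim V^{G_i} \geq 1$ at ramified layers, and globalize via $D_K = D_k^{|G|}\,\N\mathfrak{D}_{K/k}$. The only cosmetic difference is that the paper writes $\dim V^{G_i}$ as $\langle \mathbf{1}_{G_i}, \chi\vert_{G_i}\rangle$ and packages the bounds as a $\max$/$\min$ over $i$.
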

\begin{proof}
	Let $\mathfrak{p}$ be a prime ideal of $k$ and let $\chi$ be any character of $G$.  By \cite[Corollary~VI.2.1]{Serre-LocalFields}, we have
		\begin{equation}
			\label{eqn:artin-conductor-valuation}
			v_\mathfrak{p}(\mathfrak{f}_\chi)
				= \sum_{i\geq 0} \frac{|G_i|}{|G_0|}\left(\chi(1) - \frac{1}{|G_i|} \sum_{g \in G_i} \chi(g)\right),
		\end{equation}
	where $G_i$ for $i\geq 0$ denote the lower ramification groups at a prime of $K$ lying over $\mathfrak{p}$.  Temporarily taking $\chi=\mathrm{reg}$ to be the character of the regular representation of $G$, since $\mathfrak{f}_\mathrm{reg} = \mathfrak{D}_{K/k}$, the relative discriminant of $K/k$, we find in particular that
		\begin{equation}
			\label{eqn:relative-discriminant-valuation}
			v_\mathfrak{p}(\mathfrak{D}_{K/k})
				= \sum_{i \geq 0} \frac{|G_i|}{|G_0|}\left( |G| - \frac{|G|}{|G_i|}\right)
				= |G| \cdot \sum_{i \geq 0} \frac{|G_i|-1}{|G_0|}.
		\end{equation}
	We now essentially compare the contributions to \eqref{eqn:artin-conductor-valuation} and \eqref{eqn:relative-discriminant-valuation} from a fixed $i$.  Noting that
		\[
			\chi(1) - \frac{1}{|G_i|}\sum_{g \in G_i} \chi(g)
				= \chi(1) - \langle \mathbf{1}_{G_i}, \chi\vert_{G_i} \rangle
				\leq \chi(1),
		\]
	we first see that
		\[
			v_\mathfrak{p}(\mathfrak{f}_\chi)
				\leq \frac{\chi(1)}{|G|} \cdot \max\left\{ \frac{|G_i|}{|G_i|-1} : G_i \ne 1\right\} \cdot v_\mathfrak{p}(\mathfrak{D}_{K/k})
				\leq \frac{2\chi(1)}{|G|} \cdot v_\mathfrak{p}(\mathfrak{D}_{K/k}),
		\]
	which, since $D_K = D_k^{|G|} \N\mathfrak{D}_{K/k}$, implies
		\[
			q(\chi)
				= D_k^{\chi(1)} \N\mathfrak{f}_\chi
				\leq D_k^{\chi(1)} \N\mathfrak{D}_{K/k}^{\frac{2\chi(1)}{|G|}}
				= D_k^{-\chi(1)} D_K^{\frac{2\chi(1)}{|G|}}.
		\]
	This gives the first inequality.
	
	For the second, we observe that unless $G_i \leq \ker \chi$, then the expression
		\[
			\chi(1) - \frac{1}{|G_i|} \sum_{g \in G_i} \chi(g)
		\]
	must be positive, and hence at least $1$ since it is an integer.  If now $\chi$ is assumed to be faithful, then by definition $\ker \chi = 1$, which implies that
		\[
			v_\mathfrak{p}(\mathfrak{f}_\chi)
				\geq \sum_{\substack{i \geq 0 \\ : G_i \ne 1}} \frac{|G_i|}{|G_0|}
				\geq \frac{v_\mathfrak{p}(\mathfrak{D}_{K/k})}{|G|} \min\left\{ \frac{|G_i|}{|G_i|-1} : G_i \ne 1\right\}
				\geq \frac{v_\mathfrak{p}(\mathfrak{D}_{K/k})}{|G|}. 
		\]
	Similar to the above, this now implies
		\[
			q(\chi)
				= D_k^{\chi(1)} \N\mathfrak{f}_\chi
				\geq D_k^{\chi(1)} \N\mathfrak{D}_{K/k}^{1/|G|}
				= D_k^{\chi(1)-1} D_K^{1/|G|}.
		\]
	This completes the proof. 
\end{proof}

%
%
%
%
%
%
 
\section{Exceptional zeros and potentially exceptional characters}
	\label{sec:landau-siegel}
	
We record some facts and properties related to Landau--Siegel zeros of Artin $L$-functions, much of which are essentially due to Stark. Recall   for a Galois extension $K/k$ of number fields with Galois group $G$, we have precisely defined the exceptional zero $\beta_K$, the exceptional character $\psi_{K/k}$ associated to $K/k$, and the set of potentially  exceptional characters $\Psi_{K/k}(G)$ in Section~\ref{sec:intro}. Since Stark formulated his theorems purely in terms of number fields, we rephrase his major innovations in terms of Artin $L$-functions.  

\begin{lemma}[Stark, Ahn--Kwon] \label[lemma]{lem:stark-zfr}
	Let $K/k$ be a Galois extension of number fields with Galois group $G$. For any irreducible character $\psi \in \mathrm{Irr}(G)$ satisfying $\psi^2 = \mathbf{1}_G$, the 1-dimensional Artin $L$-function $L(s,\psi)$ has at most one zero in the region
	\[
	\Re(s) > 1 - \frac{1}{4 \log q(\psi)}, \qquad |\Im(s)| \leq \frac{1}{4\log q(\psi)}. 
	\]
	If this zero $\beta_{\psi}$ exists, it is real and simple. 
\end{lemma}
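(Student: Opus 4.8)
The plan is to transport the statement to the setting of Hecke $L$-functions, where assertions of exactly this shape are available, and then to match the conductor normalizations. Since $\psi$ is irreducible with $\psi^2=\mathbf 1_G$ we have $\psi(1)^2=1$, so $\psi$ is $1$-dimensional and of order dividing $2$. By class field theory, the $1$-dimensional Artin $L$-function $L(s,\psi)$ equals a Hecke $L$-function $L(s,\xi)$ attached to a primitive ray class character $\xi$ of $k$ of order dividing $2$, whose conductor is the Artin conductor $\mathfrak f_\psi$. In particular the standard analytic conductor of $L(s,\xi)$ is $D_k\,\N\mathfrak f_\xi=D_k\,\N\mathfrak f_\psi=q(\psi)$, so that the region in the statement is exactly the region measured against $L(s,\xi)$'s own analytic conductor.

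Next I would split into two cases. If $\xi$ is trivial then $L(s,\psi)=\zeta_k(s)$ and $q(\psi)=D_k$, and the assertion is exactly Stark's theorem \cite{Stark} that a Dedekind zeta function has at most one exceptional zero (with the case $k=\Q$, where $q(\mathbf 1_G)=1$, being degenerate and tacitly excluded, just as Stark's formulation requires $K\neq\Q$). If $\xi$ has order $2$ then $L(s,\xi)$ is a quadratic Hecke $L$-function of $k$, and the statement is the explicit ``at most one zero near $s=1$, which is then real and simple'' theorem for such $L$-functions with the region measured against the $L$-function's own analytic conductor; this is the content of the work of Ahn and Kwon cited in the lemma. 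In both cases the conclusion that the zero, if present, is real and simple follows once the bound is read with multiplicity: $L(s,\psi)$ has real Dirichlet coefficients, so a non-real zero in the region would come with its conjugate (also in the horizontally symmetric region), and a multiple zero would by itself violate the count.

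The one point I would emphasize is why one cannot simply apply Stark's result to $\zeta_M(s)=\zeta_k(s)L(s,\psi)$, where $M$ is the fixed field of $\ker\psi$: the conductor--discriminant formula gives $D_M=D_k\,q(\psi)\leq q(\psi)^2$, so Stark applied to $\zeta_M$ only rules out two zeros in the smaller region of radius $1/(4\log D_M)\geq 1/(8\log q(\psi))$, losing a factor of two. The improvement to radius $1/(4\log q(\psi))$ is exactly what the degree-one analysis supplies: one runs the classical Hadamard-product/positivity argument --- using $-\tfrac{\zeta_M'}{\zeta_M}(\sigma)=\sum_{\mathfrak n}\Lambda_M(\mathfrak n)\N\mathfrak n^{-\sigma}\geq 0$ for real $\sigma>1$ and balancing the simple pole of $\zeta_M$ at $s=1$ against the two putative zeros of $L(s,\psi)$, with the gamma factors and conductor of the completed function entering via the explicit formula --- but organized so that the factor $\zeta_k(s)$ contributes only an $O_{[k:\Q]}(1)$ error near $s=1$ rather than the main term $\tfrac12\log D_k$, leaving $\tfrac12\log q(\psi)$ as the surviving conductor term and hence the constant $\tfrac14$. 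The hard part is therefore not conceptual but one of explicit bookkeeping: matching the normalization of $q(\psi)$ with the conductor used in the cited estimates, checking that the explicit constant there is at least $\tfrac14$, and reconciling the strict inequality in $\Re(s)$ with the non-strict one in $\Im(s)$ (harmlessly absorbed by shrinking the region slightly, or by noting the cited region is strictly larger).
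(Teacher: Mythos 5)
Your proposal reduces the trivial case to Stark's Lemma 3 and the quadratic case to Ahn--Kwon's Corollary 1, after identifying $L(s,\psi)$ with the corresponding Hecke $L$-function and matching conductor normalizations; this is exactly what the paper does, which simply cites those two results for the respective cases. The additional remarks about why applying Stark directly to $\zeta_M(s)=\zeta_k(s)L(s,\psi)$ loses a factor of two, and how the degree-one analysis recovers it, are accurate supplementary explanation but not a different route.
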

\begin{proof}
	The   trivial and   quadratic cases were  respectively proved by Stark \cite[Lemma 3]{Stark} and Ahn--Kwon \cite[Corollary 1]{AhnKwon}. 
\end{proof}

Building on a result of Heilbronn \cite{Heilbronn}, Stark established that the source of the exceptional zero $\beta_K$ for the Dedekind zeta function $\zeta_K(s)$ is precisely a unique character from the subset $\Psi_{K/k}(G)$ of   exceptional characters. 

\begin{lemma}[Stark] \label[lemma]{lem:stark-artin}
	Let $K/k$ be a Galois extension of number fields with Galois group $G$.  If $\zeta_K(s)$ has an exceptional zero $\beta_K$, then there is a unique character $\psi_{K/k}  \in \Psi_{K/k}(G)$  such that 
	\[
	\mathop{\mathrm{ord}}_{s=\beta_K} L(s,\chi) = \langle \chi, \psi_{K/k} \rangle
	\]
	for any character $\chi$ of $G$. In particular,   every $L(s,\chi)$ is holomorphic at $s=\beta_K$, and $L(\beta_K,\chi) = 0$ if and only if $\langle \chi, \psi_{K/k} \rangle \geq 1$.  
\end{lemma}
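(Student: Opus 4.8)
The plan is to read off the behaviour of $L(s,\chi)$ at $\beta_K$ from the factorization $\zeta_K(s) = \prod_{\psi \in \mathrm{Irr}(G)} L(s,\psi)^{\psi(1)}$ by means of the \emph{Heilbronn character} attached to $\beta_K$. Since every Artin $L$-function is meromorphic, set $n_\psi := \mathop{\mathrm{ord}}_{s=\beta_K} L(s,\psi) \in \mathbb{Z}$ for each $\psi \in \mathrm{Irr}(G)$ and form the virtual character $\theta := \sum_{\psi} n_\psi \psi$. Because $\beta_K$ is real (Stark), $L(s,\overline{\psi}) = \overline{L(\overline{s},\psi)}$ gives $n_{\overline{\psi}} = n_\psi$, so $\theta$ is real-valued; and by additivity of $\mathop{\mathrm{ord}}_{s=\beta_K}$ applied to the factorization, its degree is $\theta(1) = \mathop{\mathrm{ord}}_{s=\beta_K}\zeta_K(s)$, which equals $0$ or $1$ by Stark's theorem that $\zeta_K$ has at most one (necessarily real and simple) zero in the region, with $\theta(1) = 0$ exactly when $\zeta_K(\beta_K) \ne 0$. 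The entire lemma reduces to showing that $\theta$ is either $0$ or a single character $\psi_{K/k} \in \mathrm{Irr}(G)$ with $\psi_{K/k}^2 = \mathbf{1}_G$.

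To pin $\theta$ down I would record two families of positivity constraints. First, for every subgroup $H \le G$, invariance of Artin $L$-functions under induction identifies $\langle \theta|_H, \mathbf{1}_H \rangle_H$ with $\mathop{\mathrm{ord}}_{s=\beta_K}\zeta_{K^H}(s)$; since $D_{K^H} \le D_K$ by the tower formula for discriminants, $\beta_K$ still lies in Stark's region for $K^H$, so this order lies in $\{0,1\}$. Second, for every cyclic $H \le G$ and every $\lambda \in \mathrm{Irr}(H)$ we have $\langle \theta|_H, \lambda \rangle_H = \mathop{\mathrm{ord}}_{s=\beta_K} L(s,\lambda)$, where $L(s,\lambda)$ is regarded over $K^H$ and, $H$ being abelian, is a Hecke $L$-function of $K^H$, hence holomorphic at $\beta_K \ne 1$; thus this order is $\ge 0$, i.e. $\theta|_H$ is a genuine character of every cyclic $H$. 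Feeding exactly these constraints into Heilbronn's argument \cite{Heilbronn} — together with Stark's observation \cite{Stark} that the degree bound $\theta(1) \le 1$, and its analogue over each intermediate field, is available in the narrow region — forces $\theta$ to be a genuine character of degree $\le 1$: either $\theta = 0$, or $\theta$ is linear, in which case $n_{\overline{\psi_{K/k}}} = n_{\psi_{K/k}} = 1$ forces $\psi_{K/k} = \overline{\psi_{K/k}}$, i.e. $\psi_{K/k}^2 = \mathbf{1}_G$.

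The conclusions then follow formally. If $\zeta_K(\beta_K) = 0$ then $\theta(1) = 1$, so $\theta \ne 0$ and $\theta = \psi_{K/k}$ for a unique $\psi_{K/k} \in \mathrm{Irr}(G)$ with $\psi_{K/k}^2 = \mathbf{1}_G$; by linearity of Artin $L$-functions, $\mathop{\mathrm{ord}}_{s=\beta_K} L(s,\chi) = \sum_\psi n_\psi \langle \chi, \psi\rangle = \langle \chi, \psi_{K/k}\rangle$ for every character $\chi$. In particular this order is $\ge 0$, so each $L(s,\chi)$ is holomorphic at $\beta_K$, with a zero there precisely when $\langle \chi, \psi_{K/k}\rangle \ge 1$. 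Finally, $\psi_{K/k}$ is linear, so $q(\psi_{K/k}) \le D_K^{2/|G|} \le D_K$ by \cref{lem:conductor-bound}; combined with $\beta_K > 1 - \tfrac{1}{4\log D_K} \ge 1 - \tfrac{1}{4 \log q(\psi_{K/k})}$ and $L(\beta_K, \psi_{K/k}) = 0$, this places $\psi_{K/k} \in \Psi_{K/k}(G)$.

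The genuinely hard step is the middle one: deducing from the two families of positivity constraints that the Heilbronn character is an honest character of degree at most one. This is Heilbronn's insight, and Stark's contribution is to make the degree bound available not merely for $\zeta_K$ but over every intermediate field, which is exactly what excludes virtual characters — such as $\mathrm{std} - \mathrm{sgn}$ on $S_3$, whose restriction to the cyclic $A_3$ would force $\zeta_{K^{A_3}}$ to have a \emph{pole} at $\beta_K$ — that meet all constraints visible from $G$ itself. Everything else is bookkeeping with the Artin formalism and the conductor bound from \cref{lem:conductor-bound}.
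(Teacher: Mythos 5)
Your proof is correct and amounts to reconstructing exactly the argument the paper cites: the paper's own proof simply defers the core claim to Stark's Theorem 3 (the Heilbronn-character bound $\sum_\psi n_\psi^2 \le \big(\mathop{\mathrm{ord}}_{s=\beta_K}\zeta_K(s)\big)^2$ forced by non-negativity of $\mathop{\mathrm{ord}}_{s=\beta_K}$ of abelian $L$-functions over cyclic subextensions together with Stark's degree bound $\mathop{\mathrm{ord}}_{s=\beta_K}\zeta_K(s) \le 1$), and then appends the same routine conductor comparison $q(\psi_{K/k}) \le D_K$ that you give. So the two proofs follow the same route; you have merely unfolded the citation.
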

\begin{proof}
	The statement for irreducible characters $\chi$ follows from the proof of \cite[Theorem 3]{Stark} and the observation  $\beta_K > 1 - \frac{1}{4\log D_K} \geq 1 - \frac{1}{4 \log q(\psi)}$ for any irreducible character $\psi$ of $G$.  This implies the result for general characters $\chi$ by linearity.
\end{proof}

The combination of \cref{lem:stark-zfr,lem:stark-artin} motivates our definition for the set of potentially exceptional characters $\Psi_{K/k}(G)$ in \eqref{eqn:exceptional-quadratics}.  This  reduction allowed Stark to establish an effective lower bound for $1-\beta_K$. We do the same with $1-\beta_{\psi}$ for every character $\psi \in \Psi_{K/k}(G)$.

\begin{lemma}[Stark]\label[lemma]{lem:stark-effective}
	Let $K/k$ be a Galois extension of number fields with Galois group $G$. If  $\psi \in \Psi_{K/k}(G)$ is a potentially exceptional character with real zero $\beta_{\psi}$, then
		\[
			1 - \beta_{\psi}
				\gg_{[k:\Q]} \frac{1}{(D_kq(\psi))^{1/2[k:\Q]}}.  
		\]
\end{lemma}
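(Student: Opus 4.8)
The plan is to reduce the bound for the potentially exceptional character $\psi \in \Psi_{K/k}(G)$ to Stark's classical effective lower bound for the exceptional zero of a Dedekind zeta function, applied to a cleverly chosen auxiliary field. Since $\psi$ is either trivial or quadratic, it corresponds (via class field theory) to a quadratic extension $E/k$ inside $K$ with $L(s,\psi) = \zeta_E(s)/\zeta_k(s)$, so that $\zeta_E(s) = \zeta_k(s) L(s,\psi)$. In particular, the real zero $\beta_\psi$ of $L(s,\psi)$ with $\beta_\psi > 1 - \frac{1}{4\log q(\psi)}$ is a zero of $\zeta_E(s)$. The key numerical input is the relation between the discriminant of $E$ and $q(\psi)$: by the conductor–discriminant formula, $D_E = D_k^2 \cdot q(\psi)$ when $\psi$ is quadratic (and $D_E = D_k$ when $\psi$ is trivial, a degenerate case we handle separately or vacuously), so $D_E \asymp D_k^2 q(\psi)$ and $\log D_E \asymp_{[k:\Q]} \log(D_k q(\psi))$. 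Also $[E:\Q] = 2[k:\Q]$.

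First I would invoke Stark's theorem \cite{Stark} (the effective Brauer--Siegel bound quoted in the introduction) applied to the number field $E$: it gives
\[
\Res_{s=1}\zeta_E(s) \gg_{[E:\Q]} D_E^{-1/[E:\Q]},
\]
with an effective implied constant. Next I would convert this residue bound into a lower bound on $1 - \beta_\psi$ via a standard argument: one combines the lower bound on the residue with an upper bound for $\zeta_E'(s)$ (equivalently, for the logarithmic derivative, or for $\zeta_E(\sigma)$ on a segment just to the right of $1$) of the shape $\zeta_E(\sigma)^{-1} \gg (\sigma - 1)$ near $s = 1$, to deduce that a real zero $\beta_\psi$ cannot be too close to $1$. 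Concretely, writing the residue as a limit and using that $\zeta_E$ has a simple pole at $s=1$ together with the mean-value/convexity bound $|\zeta_E(s)| \ll D_E^{c}$ in the relevant range, one gets $1 - \beta_\psi \gg_{[k:\Q]} \Res_{s=1}\zeta_E(s) / (\text{polynomial in }\log D_E) \gg_{[k:\Q]} D_E^{-1/[E:\Q]}$ after absorbing log factors, since a power of $D_E$ dominates. Substituting $[E:\Q] = 2[k:\Q]$ and $D_E \asymp_{[k:\Q]} D_k q(\psi)$ then yields exactly $1 - \beta_\psi \gg_{[k:\Q]} (D_k q(\psi))^{-1/2[k:\Q]}$.

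I expect the main obstacle to be the clean passage from the residue lower bound to the zero-repulsion statement $1 - \beta_\psi \gg \Res_{s=1}\zeta_E(s)$ with fully effective and correctly-tracked dependence on $[k:\Q]$, and in particular verifying that the auxiliary polynomial-in-$\log D_E$ losses in that step are harmlessly absorbed into the power saving $D_E^{-1/[E:\Q]}$ (this is where the $e$'s and the precise exponent $1/2[k:\Q]$ rather than something slightly weaker must be checked). A secondary technical point is confirming the conductor--discriminant bookkeeping $D_E = D_k^2 q(\psi)$ in the quadratic case and handling $\psi = \mathbf{1}_G$, for which $\Psi_{K/k}(G)$ membership forces a real zero of $\zeta_k(s)$ itself and one applies Stark directly to $k$ (or notes $\langle\chi,\mathbf{1}_G\rangle$ contributes to the pole, not a zero, so this case is effectively vacuous for the application). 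Everything else is routine once the field $E$ is identified.
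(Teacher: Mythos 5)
Your key structural idea---passing to the quadratic extension $E/k$ cut out by $\psi$ so that $\beta_\psi$ becomes a real zero of $\zeta_E(s)$, and then invoking Stark's effective results---is exactly right and matches the paper. However, the execution contains a genuine gap in the conversion step.

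You invoke Stark's residue lower bound $\Res_{s=1}\zeta_E(s) \gg_{[E:\Q]} D_E^{-1/[E:\Q]}$ and then propose to convert this into $1-\beta_\psi \gg D_E^{-1/[E:\Q]}$ by dividing by ``a polynomial in $\log D_E$,'' asserting such losses ``are harmlessly absorbed into the power saving.'' This is the wrong direction for a lower bound: the Landau--Hecke-type inequality relating a real zero of a Dedekind zeta function to its residue gives $\Res_{s=1}\zeta_E(s) \ll_{[E:\Q]} (1-\beta)(\log D_E)^{c}$ for some $c = c([E:\Q])>0$, so inverting yields only $1-\beta_\psi \gg D_E^{-1/[E:\Q]}(\log D_E)^{-c}$, which is strictly weaker than the stated lemma. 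A loss of $(\log D_E)^{-c}$ makes the lower bound smaller; it cannot be absorbed into $D_E^{-1/[E:\Q]}$ without changing the exponent. Moreover, the implication in Stark's own work runs in the opposite direction: he proves the clean zero-repulsion bound $1-\beta \gg_n D_K^{-1/n}$ first (without log losses) and then deduces the residue bound from it.

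The paper's proof avoids the detour entirely. For trivial $\psi$, it cites Stark's zero-repulsion estimate directly (below his equation (27)). For quadratic $\psi$, it invokes Stark's Lemma 8 to produce a quadratic field $\Q(\sqrt{d}) \subseteq F$ with $\zeta_{\Q(\sqrt{d})}(\beta_\psi)=0$, applies Siegel's lossless bound $1-\beta_\psi \gg |d|^{-1/2}$, and closes with the divisibility $d^{[k:\Q]} \mid D_F = D_k q(\psi)$. That last arithmetic is also where your proposal has a minor slip: since $q(\psi) = D_k \, \N\mathfrak{f}_\psi$ by the paper's definition of the analytic conductor, the conductor--discriminant formula gives $D_E = D_k^2 \N\mathfrak{f}_\psi = D_k q(\psi)$, not $D_k^2 q(\psi)$ (you wrote both). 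To repair your argument, replace the ``residue bound plus conversion'' step with a direct citation of Stark's $1-\beta_E \gg_{[E:\Q]} D_E^{-1/[E:\Q]}$ (or reproduce the quadratic-subfield reduction as the paper does); then the conclusion follows from $D_E = D_k q(\psi)$ and $[E:\Q] = 2[k:\Q]$ without any log loss.
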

\begin{proof} 
	Set $\beta = \beta_{\psi}$. If $\psi$ is trivial, the claim agrees with Stark's result  appearing below his equation (27) in \cite{Stark}. Hence, we assume that $\psi$ is quadratic. Let $F$ be the quadratic extension of $k$ associated with $\psi$, so $\beta$ is a real zero of $\zeta_F(s)$ and $D_F \leq q(\psi)^2$.  Without loss of generality, we may assume 
	\[
	1-\beta  \leq \frac{1}{(2 [k:\Q])! \cdot 8 \log q(\psi) } \leq \frac{1}{[F:\Q]! \cdot 4\log D_F},
	\]
	because otherwise the inequality holds trivially. Therefore, by \cite[Lemma 8]{Stark}, there exists a quadratic field $\Q(\sqrt{d})$  with fundamental discriminant $d$ such that $\Q(\sqrt{d})$ is contained in $F$ and $\zeta_{\Q(\sqrt{d})}(\beta) = 0$.  Siegel's classical lower bound implies $1-\beta  \gg |d|^{-1/2}$. Since $\psi$ is quadratic, $d^{[k:\mathbb{Q}]}$ must divide $D_F = D_k q(\psi)$, and the result follows.
 \end{proof}
	
There may be \textit{multiple} potentially exceptional characters in $\Psi_{K/k}(G)$ which, necessarily by \cref{lem:stark-artin}, will be associated to Artin $L$-functions of conductors with drastically different sizes. If this occurs, the real zeros must repel each other. This effect can be quantified in a weak form using \cref{lem:stark-zfr} and in a strong form, commonly known as Deuring--Heilbronn phemonenon, using a result of Lagarias--Montgomery--Odlyzko \cite{LMO}. 

\begin{lemma} \label[lemma]{lem:deuring-heilbronn} 
	There exists absolute positive constants $c_1$ and $c_2$ such that the following holds. Let $K/k$ be a Galois extension of number fields with Galois group $G$. For each $i \in \{1,2\}$, let $\psi_i \in \Psi_{K/k}(G)$ be a potentially exceptional character  with real zero  $\beta_i := \beta_{\psi_i} > 1 - \frac{1}{4 \log q(\psi_i)}$. \\
	If $\psi_1 \neq \psi_2$ then 
	\[
	\min\{ \beta_1,\beta_2 \} \leq 1 - \frac{1}{12 \log( q(\psi_1) q(\psi_2) )}
	\]
	and, moreover,
	\[
	\min\{\beta_1,\beta_2\} \leq 1 - \frac{\log\Big( \dfrac{c_2}{(1-\max\{\beta_1,\beta_2\}) \log( q(\psi_1) q(\psi_2) ) } \Big) }{c_1 \log( q(\psi_1) q(\psi_2) )}.  
	\]
\end{lemma}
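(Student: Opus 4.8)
The plan is to produce a single auxiliary Dirichlet series whose non-negativity forces the two real zeros apart, then iterate to upgrade the bound to Deuring--Heilbronn strength. Concretely, for two distinct characters $\psi_1,\psi_2 \in \Psi_{K/k}(G)$, both $\psi_i$ are trivial or quadratic, so the product $\zeta_k(s) L(s,\psi_1) L(s,\psi_2) L(s,\psi_1\psi_2)$ has non-negative Dirichlet coefficients (it is, after induction to $\Q$, a Dedekind zeta function divided by $\zeta(s)^{\text{stuff}}$, or more directly a product of Hecke $L$-functions over $k$ attached to the group generated by $\psi_1,\psi_2$, which is $C_2$ or $C_2\times C_2$). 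Since this product has a pole of order $1$ at $s=1$ and both $\beta_1,\beta_2$ among its zeros, the classical "three-term"/positivity argument (differentiating $-\frac{L'}{L}$ or using the explicit formula with a suitable test function) shows that if $\beta_1$ and $\beta_2$ are both too close to $1$ relative to $q(\psi_1)q(\psi_2)$, the residue estimate is violated. This yields the first, crude bound $\min\{\beta_1,\beta_2\} \le 1 - \frac{1}{12\log(q(\psi_1)q(\psi_2))}$ after choosing the numerical constant with some room to spare; the $12$ (versus $4$) absorbs the contribution of the extra Euler factors and the archimedean terms.

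For the second, stronger inequality I would invoke the Deuring--Heilbronn zero-repulsion result of Lagarias--Montgomery--Odlyzko \cite{LMO} directly. Their theorem says: if an $L$-function (or product thereof) attached to a character of conductor/analytic-conductor $Q$ has a real zero $\beta^* = 1 - \lambda/\log Q$ with $\lambda$ small, then every other zero $\rho$ satisfies $\Re(\rho) \le 1 - \frac{c_1' \log(c_2'/\lambda)}{\log Q}$ for absolute constants. Apply this with the auxiliary product above (so $\log Q \asymp \log(q(\psi_1)q(\psi_2))$), taking $\beta^* = \max\{\beta_1,\beta_2\}$ as the "first" exceptional zero and $\lambda = (1-\max\{\beta_1,\beta_2\})\log(q(\psi_1)q(\psi_2))$; then the "other" zero $\min\{\beta_1,\beta_2\}$ is repelled, giving exactly the displayed inequality after adjusting $c_1,c_2$. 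One must check $\lambda$ is small enough for the LMO hypothesis to apply, but if it isn't then $\max\{\beta_1,\beta_2\}$ is already far from $1$ and the first inequality (applied to the smaller zero) suffices — so the two cases together cover everything.

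The main technical obstacle is bookkeeping the analytic conductor of the auxiliary product and confirming that the LMO theorem, as stated for a single $L$-function or Dedekind zeta function, applies verbatim to the product $\zeta_k L(s,\psi_1)L(s,\psi_2)L(s,\psi_1\psi_2)$ with the conductor bound $\log Q \ll_{[k:\Q]} \log(q(\psi_1)q(\psi_2))$; this is where one uses that all $\psi_i$ are one-dimensional with known conductors and the conductor-discriminant formula, plus $D_k \le q(\psi_i)^{O_{[k:\Q]}(1)}$ type bounds (cf.\ \cref{lem:conductor-bound}). A secondary subtlety is that the constants $c_1,c_2$ must be genuinely absolute — independent of $[k:\Q]$ — which forces the conductor comparison to be done carefully so that the $[k:\Q]$-dependence only rescales $\lambda$ by a bounded factor that can be absorbed into $c_2$, rather than entering the denominator $c_1\log(q(\psi_1)q(\psi_2))$; alternatively one passes to inductions over $\Q$ at the outset so that everything is phrased in terms of classical Dirichlet-type $L$-functions where \cite{LMO} applies most cleanly.
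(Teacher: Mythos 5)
Your approach matches the paper's essentially exactly: form the compositum $F = F_1F_2$ of the (at most) quadratic extensions cut out by $\psi_1,\psi_2$, identify the auxiliary product $\zeta_k(s)L(s,\psi_1)L(s,\psi_2)L(s,\psi_1\otimes\psi_2)$ as $\zeta_F(s)$, get the crude bound from Stark's zero-free region for $\zeta_F$, and then — exactly as you suggest, with the same dichotomy on whether $\max\{\beta_1,\beta_2\}$ is already far from $1$ — apply the Lagarias--Montgomery--Odlyzko repulsion theorem to $\zeta_F$. Your flagged ``main technical obstacle'' is a non-issue: the inequalities $D_k \leq q(\psi_1)q(\psi_2)$ and $q(\psi_1\otimes\psi_2)\leq q(\psi_1)q(\psi_2)$ give $q(\psi_1)q(\psi_2)\leq D_F \leq (q(\psi_1)q(\psi_2))^3$ with no $[k:\Q]$-dependence whatsoever, and \cite{LMO} applies verbatim to the Dedekind zeta function of $F$, so no induction to $\Q$ is needed.
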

\begin{proof} 
	For $i \in \{1,2\}$, let $F_i$ be a quadratic or trivial extension of $k$ with associated character $\psi_i$. We only consider the case when both $\psi_1$ and $\psi_2$ are quadratic; the other cases are very similar. Thus, the compositum $F = F_1 F_2$ is a biquadratic field over $k$, so
	\[
	\zeta_F(s) = \zeta_k(s) L(s,\psi_1) L(s,\psi_2) L(s, \psi_1 \otimes \psi_2)
	\]
	and hence the Dedekind zeta function $\zeta_F(s)$ has  two real zeros $\beta_1$ and $\beta_2$, counted with multiplicity.		 Since $\psi_1$ and $\psi_2$ are 1-dimensional characters, we have that $q(\psi_1 \otimes \psi_2) \leq q(\psi_1) q(\psi_2)$ and also trivially $D_k \leq q(\psi_1) q(\psi_2)$, which implies
	 \begin{equation} \label{eqn:cond-disc-biquadratic}
	q(\psi_1) q(\psi_2) \leq  D_F = D_k q(\psi_1) q(\psi_2) q(\psi_1 \otimes \psi_2) \leq (q(\psi_1) q(\psi_2))^3.
	 \end{equation}
	 By Stark's zero free region for $\zeta_F(s)$, i.e. \cref{lem:stark-zfr}, it follows that 
	 \[
	 \min\{\beta_1,\beta_2\} \leq 1 - \frac{1}{4 \log D_F} \leq 1 - \frac{1}{12 \log ( q(\psi_1) q(\psi_2))}.
	 \]
	This establishes the first bound. 
	
	For the second bound, assume without loss of generality that $\beta_2 \leq \beta_1$. We claim that we may also assume that $\beta_1 > 1 - 1/(12 \log q(\psi_1) q(\psi_2))$ because otherwise 
	\[
	\beta_2 \leq \beta_1 < 1 \leq 1 - \frac{\log\Big( \dfrac{1/12}{(1-\beta_1) \log ( q(\psi_1) q(\psi_2) )} \Big) } {12 \log q(\psi_1) q(\psi_2) } 
	\]
	in which case the lemma holds with $c_1 = 12$ and $c_2 = 1/12$. By our assumption and \eqref{eqn:cond-disc-biquadratic}, it follows that $\beta_1 > 1-1/(4 \log D_F)$ so $\beta_1$ is an exceptional zero of $\zeta_F(s)$. The required upper bound for $\beta_2$ now follows from the Deuring--Heilbronn phemonenon established by Lagarias--Montgomery--Odlyzko \cite[Theorem 5.1]{LMO} applied to $\zeta_F(s)$. 
\end{proof}

%
%
%
%
%
%

\section{Short Euler product approximations and the proof of Theorem 3.1} \label{sec:short-euler}

In this section, we estimate  Euler products and approximate residues of Artin $L$-functions by short Euler products to establish \cref{thm:short-euler}. A crucial ingredient is a uniform version of the Chebotarev density theorem due to Thorner--Zaman \cite{TZ}.

\begin{theorem}[Thorner--Zaman] \label{thm:chebotarev}
	There exist absolute and effectively computable constants $c_3,c_4,c_5$ such that the following holds.  Let $K/k$ be a normal extension of number fields with Galois group $G$.  Let $\beta_K$ denote the exceptional zero of $\zeta_K(s)$ and $\psi_{K/k}$ denote the exceptional character, if they exist. For any conjugacy class $C$ of $G$, define the class function $\mathbf{1}_C := \frac{1}{|G|}\sum_{\chi} \overline{\chi}(C) \chi$.
	 For any $x \geq ([K:\Q]^{[K:\Q]}  D_K)^{c_3}$, if we define 
		\begin{align*}
			\mathrm{Li}(x,C) & := \frac{|C|}{|G|}\big( \mathrm{Li}(x) - \psi_{K/k}(C) \mathrm{Li}(x^{\beta_K}) \big),   \\ 
			\delta(x,C) & := \frac{1}{\mathrm{Li}(x,C)} \Big( \sum_{\mathrm{N} \mathfrak{p} \leq x} \mathbf{1}_C(\mathfrak{p}) -\mathrm{Li}(x,C)  \Big),
		\end{align*}
		then  
		\[
		|\delta(x,C)| \leq  c_5 \exp\left(\frac{-c_4 \log x}{\log([K:\Q]^{[K:\Q]}D_K) + ([K:\Q] \log x)^{1/2}}\right).
		\]
\end{theorem}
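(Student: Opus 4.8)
Since this theorem is the main result of Thorner and Zaman \cite{TZ}, in a formulation convenient for the sections that follow, the plan is simply to invoke it directly; nothing new need be proved. For the reader's orientation I will nonetheless indicate the structure of the argument one would carry out to establish a statement of this strength, both because it is the analytic engine behind everything that follows and because the error term has a somewhat unusual shape.

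The first step is a truncated explicit formula. Fixing a conjugacy class $C$ and using both $\mathbf{1}_C = \frac{1}{|G|}\sum_{\psi}\overline{\psi}(C)\psi$ and $\zeta_K(s) = \prod_{\psi}L(s,\psi)^{\psi(1)}$ (so that the nontrivial zeros of $\zeta_K$ are, with multiplicity, the union of the zeros of the $L(s,\psi)$), one writes a weighted prime-counting sum --- essentially $\sum_{\N\kp\le x}\mathbf{1}_C(\kp)\log\N\kp$, up to the controllable contributions of ramified primes and higher prime powers --- as a contour integral of $\frac{1}{|G|}\sum_\psi\overline{\psi}(C)\bigl(-\tfrac{L'}{L}(s,\psi)\bigr)$. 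Shifting the contour, the pole of $\zeta_K(s)$ at $s=1$ produces the expected main term (of size $\asymp \frac{|C|}{|G|}x$ for the $\log$-weighted count), each nontrivial zero $\rho$ of $\zeta_K(s)$ with $|\Im\rho|\le T_0$ contributes a term of size $\ll x^{\Re\rho}/|\rho|$, and truncating at height $T_0$ costs an error of order roughly $x(\log x)^2/T_0$ plus terms depending on $\log D_K$ and $[K:\Q]$. Converting back to $\sum_{\N\kp\le x}\mathbf{1}_C(\kp)$ by partial summation turns $x^\rho/\rho$ into $x^\rho/(\rho\log x)$ and, crucially, the would-be main term into $\mathrm{Li}(x,C)$ in the precise form stated.

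The heart of the argument is bounding the zero sum $\sum_{|\Im\rho|\le T_0}x^{\Re\rho}/|\rho|$. One first isolates the potential Landau--Siegel zero $\beta_K$ of $\zeta_K(s)$: by the multiplicity bookkeeping of \cref{lem:stark-artin}, extracting this single zero is exactly what produces the secondary main term of $\mathrm{Li}(x,C)$, namely $-\frac{|C|}{|G|}\psi_{K/k}(C)\mathrm{Li}(x^{\beta_K})$. For the remaining zeros one combines a \emph{log-free zero density estimate} for $\zeta_K(s)$ --- a bound of the form $\#\{\rho : \Re\rho\ge 1-\lambda,\ |\Im\rho|\le T_0\}\ll (D_K(eT_0)^{[K:\Q]})^{c\lambda}$, with no spurious logarithmic factor --- with Deuring--Heilbronn zero repulsion in the spirit of \cref{lem:deuring-heilbronn}, which forces every zero other than $\beta_K$ to the left of $\Re(s)=1-\lambda_0$ for a $\lambda_0$ that is comparatively large when $\beta_K$ is close to $1$. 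Integrating $x^{1-\lambda}$ against the zero-counting estimate bounds the zero sum by $\ll x^{1-\lambda_0}(D_K(eT_0)^{[K:\Q]})^{c\lambda_0}$ with $\lambda_0\asymp 1/(\log D_K + [K:\Q]\log T_0)$ coming from the associated zero-free region. One then optimizes over the truncation height: choosing $T_0$ so that $[K:\Q]\log T_0\asymp ([K:\Q]\log x)^{1/2}$ balances the truncation error $x/T_0$ against the zero-sum error $x^{1-\lambda_0}$ and yields precisely the exponent $\log x/(\log([K:\Q]^{[K:\Q]}D_K)+([K:\Q]\log x)^{1/2})$; the constraint $x\ge([K:\Q]^{[K:\Q]}D_K)^{c_3}$ encodes the Minkowski-type interaction between the degree and the discriminant that controls both the density of small-norm primes and the width of the zero-free region.

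The principal obstacle --- and the main technical advance of \cite{TZ} over the classical effective Chebotarev theorem of Lagarias--Odlyzko --- is establishing the log-free zero density estimate with essentially optimal joint dependence on $D_K$ and $[K:\Q]$, together with a correspondingly uniform Deuring--Heilbronn inequality; it is exactly these inputs that allow the degree to enter the final error only through the mild factor $[K:\Q]^{[K:\Q]}$ and the term $([K:\Q]\log x)^{1/2}$. For all of the (substantial) details we refer to \cite{TZ}.
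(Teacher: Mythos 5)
Your proposal is right that this theorem is fundamentally a citation to \cite[Theorem 1.1]{TZ}, and your sketch of the proof architecture behind that result (truncated explicit formula, isolation of the Landau--Siegel zero, log-free zero density estimate with optimal joint dependence on $D_K$ and $[K:\Q]$, Deuring--Heilbronn repulsion, and optimization of the truncation height) is a faithful summary of what Thorner and Zaman actually do. That part is useful orientation even if it plays no role in the paper's proof.

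However, your opening claim that ``nothing new need be proved'' is not quite right, and this is precisely where the paper's (very short) proof lives. The statement in \cite{TZ} does not literally mention $\psi_{K/k}$: their secondary main term has the form $\tfrac{|C|}{|G|}\theta_1(C)\,\mathrm{Li}(x^{\beta_1})$, where $\theta_1(C)$ is defined by means of a real Hecke character attached to an abelian subgroup $H \leq G$, not by the global exceptional character of the full extension $K/k$. To deduce Theorem~\ref{thm:chebotarev} exactly as stated one must verify that $\theta_1(C) = -\psi_{K/k}(C)$, i.e.\ that the $H$-character appearing in \cite{TZ} is the restriction of $\psi_{K/k}$ to $H$. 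The paper does this by invoking \cref{lem:stark-artin}, which pins down $\psi_{K/k}$ as the unique irreducible character whose $L$-function vanishes at $\beta_K$, and then matching this to the way \cite{TZ} detects the exceptional zero through an abelian subextension. Your proposal references \cref{lem:stark-artin} only as bookkeeping inside a hypothetical from-scratch explicit-formula argument, not as the translation step between the two formulations; without that step the theorem as stated in this paper does not follow from a bare citation.
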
	
\begin{proof}
	This theorem is precisely \cite[Theorem 1.1]{TZ} upon noting that quantity $\theta_1(C)$ appearing there is $-\psi_{K/k}(C)$.  Indeed, Thorner and Zaman's notion of an exceptional zero, which they denote $\beta_1$ instead of $\beta_K$, is the same as Stark's (and hence ours).  They consider the zero $\beta_1$ as being associated with a real character of an abelian subgroup $H \leq G$, but it follows from Lemma~\ref{lem:stark-artin} that this character (if it exists) must be the restriction to $H$ of $\psi_{K/k}$.  This suffices to show that $\theta_1(C) = -\psi_{K/k}(C)$ and hence the theorem.
\end{proof}	

Due to its prevalence, it will be convenient to permanently define a shorthand for the error bound in \cref{thm:chebotarev}. For any number field $K$ and $t \geq ([K:\Q]^{[K:\Q]}  D_K)^{c_3}$,  define
\begin{equation}
	\label{eqn:pnt-error-bound}
	\Delta_K(t) := c_5  \exp\left(\frac{-c_4 \log t}{\log([K:\Q]^{[K:\Q]}D_K) + ([K:\Q] \log t)^{1/2}}\right)
\end{equation}
where the constants $c_3, c_4$ and $c_5$ are from \cref{thm:chebotarev}.  We proceed to asymptotically estimate subproducts of $L(1,\chi)$. 
\begin{lemma} \label[lemma]{lem:subproducts}
Let $K/k$ be a Galois extension with Galois group $G$. Let $\beta_K$ denote the exceptional zero of $\zeta_K(s)$ and let $\psi_{K/k}$ denote the exceptional character, if they exist.   For any character $\chi$ of $G$ and any $e([K:\Q]^{[K:\Q]} D_K)^{c_3} \leq y < x < \infty$,  
	\[
	\prod_{y < \mathrm{N}\mathfrak{p} \leq x} L_{\kp}(1,\chi) = \exp\Big( \int_y^x \frac{\langle \chi, \mathbf{1}_G \rangle}{t \log t} - \frac{\langle \chi, \psi_{K/k} \rangle}{t^{2-\beta_K} \log t} dt \Big) \Big\{ 1 + O \big( \Delta_K(y) \big) \Big\}^{\chi(1)}. 
	\]
\end{lemma}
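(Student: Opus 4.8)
The plan is to take logarithms and reduce the claim to an estimate on the Dirichlet series $\sum_{y < \mathrm{N}\mathfrak{p} \leq x} \frac{\chi(\mathfrak{p})}{\mathrm{N}\mathfrak{p}}$. First I would apply \cref{lem:local-bound} to write $\log \prod_{y<\mathrm{N}\mathfrak{p}\leq x} L_\mathfrak{p}(1,\chi) = \sum_{y<\mathrm{N}\mathfrak{p}\leq x} \frac{\chi(\mathfrak{p})}{\mathrm{N}\mathfrak{p}} + O\big(\chi(1) \sum_{\mathrm{N}\mathfrak{p}>y} \mathrm{N}\mathfrak{p}^{-2}\big)$, and the tail sum over prime ideals of norm exceeding $y$ is $O(\chi(1)/(y\log y))$ crudely (using that the number of prime ideals of $k$ of norm up to $z$ is $O(z)$, in fact $O(z/\log z)$, uniformly — or even just $O([k:\Q] z)$ from rational primes), which is certainly absorbed into the claimed error term $O(\chi(1)\Delta_K(y))$ since $\Delta_K(y)$ decays only like a stretched exponential in $\log y$ while $1/(y\log y)$ is genuinely power-saving. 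So the heart of the matter is to show
\[
\sum_{y < \mathrm{N}\mathfrak{p} \leq x} \frac{\chi(\mathfrak{p})}{\mathrm{N}\mathfrak{p}} = \int_y^x \Big( \frac{\langle \chi, \mathbf{1}_G\rangle}{t\log t} - \frac{\langle \chi, \psi_{K/k}\rangle}{t^{2-\beta_K}\log t}\Big)\, dt + O\big(\chi(1)\Delta_K(y)\big),
\]
after which exponentiating and writing $e^{O(\chi(1)\Delta_K(y))} = 1 + O(\chi(1)\Delta_K(y))$ (valid since the exponent is bounded, as $c_5$ is absolute and $\Delta_K(y) \leq c_5$) finishes the proof.

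For the displayed sum, I would expand $\chi$ into conjugacy-class indicators: writing $\chi = \sum_C \chi(C)\,|C|\,\mathbf{1}_C$ appropriately — more precisely, $\chi(\mathfrak{p}) = \sum_C \chi(C)\, \mathbf{1}_C(\mathfrak{p})$ is not quite right because of ramified primes, so I would first note that ramified primes $\mathfrak{p}$ (those dividing $\mathrm{Disc}(K)$) are $O_{[K:\Q]}(1)$ in number with norm bounded in terms of $D_K$, hence all lie below $y \geq e([K:\Q]^{[K:\Q]}D_K)^{c_3}$ and contribute nothing to the range $(y,x]$; thus on the relevant range $\chi(\mathfrak{p}) = \chi(\sigma_\mathfrak{p})$ genuinely and the class-function decomposition is exact. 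Then partial summation against \cref{thm:chebotarev} is the engine: for each conjugacy class $C$,
\[
\sum_{y<\mathrm{N}\mathfrak{p}\leq x} \mathbf{1}_C(\mathfrak{p}) = \frac{|C|}{|G|}\big(\mathrm{Li}(x)-\mathrm{Li}(y)\big) - \frac{|C|}{|G|}\psi_{K/k}(C)\big(\mathrm{Li}(x^{\beta_K})-\mathrm{Li}(y^{\beta_K})\big) + O\big(|\mathrm{Li}(x,C)|\Delta_K(x) + |\mathrm{Li}(y,C)|\Delta_K(y)\big),
\]
and then Abel summation with weight $1/\mathrm{N}\mathfrak{p}$ converts the main terms into the integrals $\int_y^x \frac{dt}{t\log t}$ and $\int_y^x \frac{\beta_K t^{\beta_K-1}}{t\log t}\,dt = \int_y^x \frac{dt}{t^{2-\beta_K}\log t}$ (up to the harmless factor $\beta_K$, which I would absorb or track since $\beta_K$ is within $O(1/\log D_K)$ of $1$ and this discrepancy is power-saving in $y$, hence $\ll \Delta_K(y)$), and summing $\chi(C)\cdot\frac{|C|}{|G|}$ against these reproduces $\langle\chi,\mathbf{1}_G\rangle$ and $\langle\chi,\psi_{K/k}\rangle$ by orthogonality. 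The error terms from partial summation are $O\big(\chi(1)\,\Delta_K(y)\big)$ once one checks $\Delta_K(x) \ll \Delta_K(y)$ for $x\geq y$ (clear from the monotonicity of the exponent in \eqref{eqn:pnt-error-bound}) and that the boundary/integrated-error contributions telescope correctly; the trivial bound $|\chi(C)|\leq\chi(1)$ collects the $\chi(1)$ dependence.

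The main obstacle, I expect, is \emph{bookkeeping the error uniformly in $x$}: the Chebotarev error $\Delta_K(x)$ appears both at the top endpoint and, more insidiously, inside the Abel-summation integral $\int_y^x (\text{error density})\, d(1/t)$, and one must verify that integrating the stretched-exponential bound against $dt/t^2$ still yields something $\ll \Delta_K(y)$ uniformly for all $x\in(y,\infty)$ — this works because $\int_y^\infty \Delta_K(t)\,\frac{dt}{t}$ converges and is dominated by its value near $t=y$, but it requires a short convexity/monotonicity argument on the exponent $\frac{c_4\log t}{\log(\cdots D_K)+([K:\Q]\log t)^{1/2}}$, which is increasing in $t$. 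A secondary subtlety is handling the $\mathrm{Li}(x^{\beta_K})$ term when $\beta_K$ is very close to $1$: here $\mathrm{Li}(x^{\beta_K})$ is not small, but this is fine — it is precisely the exceptional main term, and the identity $\frac{d}{dt}\mathrm{Li}(t^{\beta_K}) = \frac{\beta_K t^{\beta_K - 1}}{\log(t^{\beta_K})} = \frac{t^{\beta_K-1}}{\log t}$ makes the Abel summation clean, so no real difficulty arises there beyond care with constants.
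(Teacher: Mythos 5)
Your proposal is correct and follows essentially the same route as the paper: reduce via \cref{lem:local-bound} to the prime sum $\sum_{y<\mathrm{N}\mathfrak{p}\leq x}\chi(\mathfrak{p})/\mathrm{N}\mathfrak{p}$, expand $\chi$ over conjugacy-class indicators, feed in \cref{thm:chebotarev}, apply partial summation and orthogonality to recover the $\langle\chi,\mathbf{1}_G\rangle$ and $\langle\chi,\psi_{K/k}\rangle$ integrands, and then control the integrated error against $\Delta_K(t)$. The paper handles the error integral $\int_y^x \Delta_K(t)\,dt/(t\log t)$ by a decomposition into ranges $[y^j,y^{j+1}]$ and observing that the exponent in $\Delta_K(y^j)$ decays at least like a constant times $\sqrt{j}$, so the $j=1$ term $\Delta_K(y)$ dominates; your convexity/monotonicity remark gestures at the same fact but a fully written-out argument would want that explicit geometric summation rather than just monotonicity of the exponent (monotonicity alone would make $\int_y^\infty \Delta_K(y)\,dt/(t\log t)$ diverge). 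Your explicit observation that ramified primes all have $\mathrm{N}\mathfrak{p}\leq D_K<y$ and hence vanish from the range $(y,x]$, so that $\chi(\mathfrak{p})=\chi(\sigma_\mathfrak{p})$ genuinely, is a clean way to justify the class-function expansion; the paper instead works with the averaged definition $\chi(\mathfrak{p})=\mathrm{tr}(\sigma_\mathfrak{p}\mid V^{I_\mathfrak{p}})$ from \eqref{eqn:chi-p-def}, for which the expansion $\chi(\mathfrak{p})=\sum_C\chi(C)\mathbf{1}_C(\mathfrak{p})$ holds by linearity of the averaging operator even at ramified primes. These are equivalent in effect.
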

\begin{proof} 
By \cref{lem:local-bound} and standard prime power arguments, we see that 
	\begin{equation} \label{eqn:subproduct-log}	 
	\begin{aligned}
	\sum_{y < \N\kp \leq x} \log L_{\kp}(1,\chi) 
		& = \sum_{y < \N\kp \leq x} \frac{\chi(\kp)}{\N\kp} + O\Big( \sum_{j=1}^{\infty} \sum_{y < p^j \leq x} \frac{[k:\Q] \chi(1)}{p^{2j}} \Big) \\
		&  = \sum_{y < \N\kp \leq x} \frac{\chi(\kp)}{\N\kp} + O\Big( \frac{[k:\Q] \chi(1)}{y} \Big).			
	\end{aligned}
	\end{equation}
	\normalsize
	The first error arises from the observations that $\N\kp = p^j$ for some rational prime $p$ and integer $j \geq 1$, and also that there are at most $[k:\Q]$ prime ideals $\kp$ above any given rational prime $p$.  The final error is absorbed by the claimed bound as $y \geq ([K:\Q]^{[K:\Q]} D_K)^{c_3}$ and $[K:\Q] \geq [k:\Q]$, so it suffices to analyze the remaining sum. 
		
	Define $S(t,\phi) = \sum_{\mathrm{N}\mathfrak{p} \leq t} \phi(\mathfrak{p})$ for any class function $\phi: G \to \mathbb{C}$. By \cref{thm:chebotarev} and \eqref{eqn:pnt-error-bound}, if $\mathbf{1}_C$ is the indicator function for a conjugacy class $C$ of $G$, it follows that
	\[
	S(t,\mathbf{1}_C) = \mathrm{Li}(t,C)  + \mathrm{Li}(t,C) \delta(t,C) \quad \text{ and }\quad|\delta(t,C)| \leq \Delta_K(t) \quad \text{ for } t \geq y.
	\]
	Expanding $\chi$ in terms of this basis for class functions, we see that 
	\[
	S(t,\chi) = \sum_{C} \chi(C) S(t,\mathbf{1}_C) = \sum_C \chi(C) \mathrm{Li}(t,C) + \sum_C \chi(C) \mathrm{Li}(t,C) \delta(t,C), 
	\]
	the summation over all conjugacy classes $C$ of $G$. By orthogonality of characters, we have 
	\[
	\sum_C \chi(C) \mathrm{Li}(t,C)= \langle \chi, \mathbf{1}_G \rangle \mathrm{Li}(t) - \langle \chi, \psi_{K/k} \rangle \mathrm{Li}(t^{\beta_K})
	\]
	and therefore, for $t \geq y$, 
	\[
	|S(t,\chi) - \langle \chi, \mathbf{1}_G \rangle \mathrm{Li}(t) + \langle \chi, \psi_{K/k} \rangle \mathrm{Li}(t^{\beta_K})| \leq  |\sum_C \chi(C) \mathrm{Li}(t,C) \delta(t,C)| \leq  2\chi(1) \mathrm{Li}(t) \Delta_K(t). 
	\]
	By partial summation and the bound $\mathrm{Li}(t) \ll t/\log t$, we have that 
	\[
	\sum_{y < \N\kp \leq x} \frac{\chi(\kp)}{\N\kp} 
	 = \int_y^x \frac{\langle \chi, \mathbf{1}_G \rangle}{t \log t} - \frac{\langle \chi, \psi_{K/k}\rangle}{t^{2-\beta_K} \log t} dt + O\Big(
	  \chi(1) \Big[ \int_y^x \frac{\Delta_K(t)}{t \log t} dt + \frac{\Delta_K(y)}{\log y} + \frac{\Delta_K(x)}{\log x} \Big] \Big) 
	 \]
	By a dyadic decomposition, the error term is at most  
	\[
	\ll \chi(1) \sum_{j=1}^{\infty} 	\int_{y^j}^{y^{j+1}} \frac{ \Delta_K(y^j) }{t \log t} dt \ll  \sum_{j=1}^{\infty}  \chi(1) \exp\Big( \frac{-c_4 j \log y}{\log(n^n D) + (jn \log y)^{1/2} } \Big), 
	\]
	which is dominated by the $j=1$ term, namely $\chi(1) \Delta_K(y)$. Upon combining with \eqref{eqn:subproduct-log} and exponentiating, this establishes the lemma. 
\end{proof}

This lemma yields a convenient form of Mertens' formula over number fields.  
	
\begin{lemma} \label[lemma]{lem:mertens} 
		Let $k$ be any number field. For any $x \geq y \geq \max\{ D_k, e\}$, 
		\[
		\sum_{y < \N\kp \leq x}\frac{1}{\N\kp} \leq \log\log x - \log\log y + O_{[k:\Q]}(1). 
		\]
\end{lemma}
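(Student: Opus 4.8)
The plan is to read off the bound from \cref{lem:subproducts} applied to the trivial extension $K:=k$ (so that $G=\{1\}$) and the trivial character $\chi:=\mathbf{1}_G$, for which $L_\kp(1,\chi)=(1-\N\kp^{-1})^{-1}$, $L(s,\chi)=\zeta_k(s)$, and $\langle\chi,\mathbf{1}_G\rangle=1$. First I would dispose of the case $k=\Q$, where the statement is just the classical form of Mertens' theorem, so assume $n:=[k:\Q]\geq 2$ and $D_k\geq 3$. Since $\N\kp^{-1}\leq-\log(1-\N\kp^{-1})$, it suffices to bound $\log\prod_{y<\N\kp\leq x}(1-\N\kp^{-1})^{-1}$. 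Writing $y_0:=e(n^{n}D_k)^{c_3}$ with $c_3$ the constant from \cref{thm:chebotarev}, \cref{lem:subproducts} applies whenever $y_0\leq y<x$; since the exceptional-zero integral appearing there is non-negative, it gives $\prod_{y<\N\kp\leq x}(1-\N\kp^{-1})^{-1}\leq\frac{\log x}{\log y}\bigl(1+O(\Delta_k(y))\bigr)$, and because $\Delta_k(y)\leq c_5$ this already yields $\sum_{y<\N\kp\leq x}\N\kp^{-1}\leq\log\log x-\log\log y+O(1)$ in the range $y\geq y_0$.

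It remains to treat $\max\{D_k,e\}\leq y<y_0$, which I would do by splitting the sum at $y_0$. The tail $\sum_{y_0<\N\kp\leq x}\N\kp^{-1}$ (nonempty only when $x>y_0$) is controlled by the previous paragraph with lower endpoint $y_0$, giving $\leq\log\log x-\log\log y_0+O(1)\leq\log\log x-\log\log y+O(1)$. For the head $H:=\sum_{y<\N\kp\leq\min\{x,y_0\}}\N\kp^{-1}$ I would argue crudely: prime ideals of residue degree $\geq 2$ have norm $\geq p^2$ over the rational prime $p$ below them, and at most $n$ primes of $k$ lie over each $p$, so they contribute $\leq n\sum_p p^{-2}=O(n)$; prime ideals of residue degree $1$ and norm $p$ number at most $n$ for each $p$, so by Mertens over $\Q$ they contribute at most $n\sum_{y<p\leq y_0}p^{-1}\leq n(\log\log y_0-\log\log y)+O(n)$. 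Finally, the hypothesis $y\geq\max\{D_k,e\}$ gives $\log y\geq\max\{\log D_k,1\}$ and hence $\log y_0=1+c_3\log(n^{n}D_k)\leq(1+c_3+c_3 n\log n)\log y$, so $\log\log y_0-\log\log y\leq\log(1+c_3+c_3 n\log n)=O_n(1)$; thus $H=O_{[k:\Q]}(1)$. Combining head and tail (when $x\leq y_0$ the tail is empty and the right-hand side is already non-negative, so the head bound alone suffices) gives $\sum_{y<\N\kp\leq x}\N\kp^{-1}\leq\log\log x-\log\log y+O_{[k:\Q]}(1)$ in every case.

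The one genuine subtlety, and the step that needs the most care, is that \cref{lem:subproducts} only becomes available once $y$ exceeds a fixed power of $D_k$, which is typically far larger than the hypothesised threshold $y\geq D_k$. What saves the argument is exactly that over the missing window $D_k\leq t\leq(n^{n}D_k)^{c_3}$ the function $\log\log t$ changes by only $O_n(1)$, so the wasteful factor of $n=[k:\Q]$ coming from the trivial ``at most $n$ primes over each rational prime'' count is absorbed harmlessly into the permitted error $O_{[k:\Q]}(1)$; verifying this bookkeeping (and the small-discriminant edge cases) is essentially all the work.
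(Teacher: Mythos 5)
Your proof is correct and follows essentially the same strategy as the paper's: invoke \cref{lem:subproducts} (via Chebotarev) once $y$ exceeds the threshold $y_0 = e(n^n D_k)^{c_3}$, and dispose of the window $[\max\{D_k,e\}, y_0]$ trivially by counting at most $n$ prime ideals over each rational prime and applying Mertens over $\Q$. The extra bookkeeping you supply (the explicit head/tail split, the estimate $\log y_0 \ll_{[k:\Q]} \log y$, and the $k=\Q$ case) merely fills in steps the paper compresses into a ``without loss of generality.''
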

\begin{proof} Set $n = [k:\Q]$ and $D = \max\{D_k,e\}$.  Since there are at most $n$ prime ideals above a rational prime $p$ and $\N\kp = p^j$ for some $j \geq 1$, it follows by Mertens' formula over $\Q$ that 
\[
\sum_{D < \N\kp < e(n^{n} D)^{c_3} } \frac{1}{\N\kp} \leq  \sum_{j=1}^{\infty} \sum_{D < p^j < e(n^n D)^{c_3} } \frac{n}{p^j} \ll n \log\Big( \frac{\log (n^n D)}{\log D} \Big) \ll_n 1. 
\]
The above estimate implies we may assume $y \geq e(n^n D)^{c_3}$ without loss of generality. Thus, from \cref{lem:subproducts} with $K=k$ and $\chi = \mathbf{1}_G$ and \eqref{eqn:subproduct-log}, we have that
\[
\sum_{y < \N\kp \leq x} \frac{1}{\N\kp} \leq \int_y^x \frac{1}{t \log t} dt + O( \Delta_k(y) + n/y) = \log\log x - \log\log y + O(1),  
\]
as required. 
\end{proof}
 
We conclude this section with the proof of \cref{thm:short-euler}. 

\begin{proof}[Proof of \cref{thm:short-euler}] The function 
	\[
	L(s) := \frac{L(s,\chi)}{\zeta(s)^{\langle \chi, \mathbf{1}_G\rangle} }
	\]
	is holomorphic and non-zero at $s=1$ by assumption. Moreover, since $(s-1)\zeta(s) \to 1$ as $s \to 1$, we have that 
	\[
	L(1) = \lim_{s \to 1} \big[ (s-1)^{\langle \chi,\mathbf{1}_G \rangle} L(s,\chi) \big] = \kappa(\chi)
	\]
	by \eqref{eqn:kappa-def}. Applying \cref{lem:subproducts} for $L(s,\chi)$ and for $\zeta(s)$ (which does not have any real zeros), we have for $3 ([K:\Q]^{[K:\Q]} D_K) \leq y < x < \infty$ that
		\[
		 \prod_{y < \N\kp \leq x} L_{\kp}(1,\chi)  \times  \prod_{y < p \leq x} \Big(1  - \frac{1}{p} \Big)^{\langle \chi, \mathbf{1}_G \rangle}
		 = \exp\Big( - \int_y^x \frac{\langle \chi, \psi_{K/k}\rangle}{t^{2-\beta_K} \log t} dt \Big) \times \Big \{ 1 + O\big(  \Delta_K(y) \big) \Big\}^{\chi(1)} 
		\]
		since $\Delta_{\Q}(y) \leq \Delta_K(y)$ by   \eqref{eqn:pnt-error-bound}. Recall  if $\beta_K$ does not exist then we treat $\psi_{K/k} \equiv 0$ by convention. Taking $y = T$ and $x \to \infty$ above, we conclude that 
		\[
		L(1) 
		=  \eta(\chi,T) \prod_{\N\kp \leq T} L_{\kp}(1,\chi)  \times  \prod_{p \leq T} \Big(1  - \frac{1}{p} \Big)^{\langle \chi, \mathbf{1}_G \rangle}  \Big\{ 1 +  O\big( \Delta_K(T)\big) \Big\}^{\chi(1)},			
		\]
		where $\eta(\chi,T)$ is defined in \cref{thm:short-euler}. 	After applying the prime number theorem over $\Q$ for the product over primes $p \leq T$ in the form
		\[
		\prod_{p \leq T} \Big(1  - \frac{1}{p} \Big) = \frac{e^{-\gamma}}{\log T}\Big( 1 + O(e^{-\sqrt{\log T}}) \Big),
		\]
		we complete the proof. Here $\gamma$ is the Euler-Mascheroni constant over $\Q$. 
\end{proof}

%
%
%
%
%
%

\section{Three key propositions} \label{sec:key-props}

To derive upper and lower bounds on $\kappa(\chi)$, we establish three key propositions.  Our first key proposition approximates $\kappa(\chi)$ by short Euler products with \textit{variable} lengths $T(\psi)$ depending on each irreducible component $\psi$ of $\chi$.  

\begin{proposition} \label[proposition]{prop:residue}
		Let $K/k$ be a Galois extension of number fields with Galois group $G$. Let $( T(\psi) )_{\psi}$ be a tuple of real numbers such that $T(\psi) \geq \max\{ q(\psi), e\}$ for every irreducible character $\psi$ of $G$. For any character $\chi$ of $G$,
	\[
	\kappa(\chi) \asymp_{[k:\Q],|G|,\chi(1)} \frac{1}{(\log T(\mathbf{1}_G))^{\langle \chi, \mathbf{1}_G\rangle}} \prod_{\psi} \Big( \eta(\psi, T(\psi) ) \prod_{\N\kp \leq T(\psi)} L_{\kp}(1,\psi) \Big)^{\langle \chi, \psi \rangle}, 
	\]
	where, for any  irreducible character $\psi$ of $G$ and real number $\tau \geq 3$,
	\begin{equation} \label{eqn:eta-T-variable-def}
	\eta(\psi, \tau) = 
		\begin{cases}
		 		\exp\Big( -\displaystyle \int_{\tau}^{\infty} \frac{1}{t^{2-\beta_{\psi}} \log t} dt \Big)   & \text{if $\psi \in \Psi_{K/k}(G)$,} \\[10pt]
		 		1 & \text{otherwise.}
		 \end{cases} 		
	\end{equation}
	\normalsize
\end{proposition}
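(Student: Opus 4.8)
The plan is to reduce to irreducible characters and then handle each irreducible constituent by running the short Euler product machinery of \S\ref{sec:short-euler} over an auxiliary field whose discriminant is comparable to the relevant conductor. By linearity of Artin $L$-functions, and since among $\mathrm{Irr}(G)$ only $\mathbf{1}_G$ contributes a pole at $s=1$, one has $\kappa(\chi) = (\Res_{s=1}\zeta_k(s))^{\langle\chi,\mathbf{1}_G\rangle}\prod_{\psi\neq\mathbf{1}_G}L(1,\psi)^{\langle\chi,\psi\rangle}$, and the right-hand side of the proposition factors identically. Writing $\kappa(\psi):=L(1,\psi)$ for $\psi\neq\mathbf{1}_G$ (so $\kappa(\mathbf{1}_G)=\Res_{s=1}\zeta_k(s)$), it therefore suffices to prove, for each $\psi\in\mathrm{Irr}(G)$ and each real $\tau\geq\max\{q(\psi),e\}$, the single-character estimate
\[
\eta(\psi,\tau)\prod_{\N\kp\leq\tau}L_\kp(1,\psi)\ \asymp_{[k:\Q],|G|,\psi(1)}\ \kappa(\psi)\,(\log\tau)^{\langle\psi,\mathbf{1}_G\rangle}.
\]
Raising this to the power $\langle\chi,\psi\rangle$ and multiplying the at most $|G|$ resulting factors then yields the proposition, because $\psi(1)\leq\chi(1)$ whenever $\langle\chi,\psi\rangle\geq1$ and $\sum_\psi\langle\chi,\psi\rangle\leq\chi(1)$.

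To prove the single-character estimate I would fix $\psi\in\mathrm{Irr}(G)$ and attach to it a Galois extension $K_\psi/k$: take $K_\psi=k$ if $\psi=\mathbf{1}_G$; take $K_\psi$ to be the quadratic extension of $k$ cut out by $\psi$ if $\psi^2=\mathbf{1}_G$ and $\psi\neq\mathbf{1}_G$; and take $K_\psi=K^{\ker\psi}$ otherwise. In each case $\psi$ descends to a character of $\mathrm{Gal}(K_\psi/k)$ with the same $L$-function and analytic conductor, $[K_\psi:\Q]\leq[K:\Q]$, and — by \cref{lem:conductor-bound} and the conductor--discriminant formula — $q(\psi)\leq D_{K_\psi}$ and $\log D_{K_\psi}\ll_{[k:\Q],|G|}\log q(\psi)$ (the sole degenerate case being $\psi=\mathbf{1}_G$ over $k=\Q$, where the estimate is just classical Mertens). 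I would then set $\tau_\psi:=\big([K_\psi:\Q]^{[K_\psi:\Q]}D_{K_\psi}\big)^{A}$ for a sufficiently large constant $A=A([k:\Q],|G|)$ chosen so that $\psi(1)\Delta_{K_\psi}(t)\leq\tfrac12$ for all $t\geq\tau_\psi$; note $\log\tau_\psi\asymp_{[k:\Q],|G|}\log q(\psi)$. Applying \cref{thm:short-euler} (equivalently \cref{lem:subproducts}, and, when $\psi=\mathbf{1}_G$, applied to the trivial extension $k/k$) over $K_\psi/k$ at truncation $\tau_\psi$ gives $\kappa(\psi)\asymp_{[k:\Q],|G|,\psi(1)}\eta_\psi(\tau_\psi)(\log\tau_\psi)^{-\langle\psi,\mathbf{1}_G\rangle}\prod_{\N\kp\leq\tau_\psi}L_\kp(1,\psi)$, where $\eta_\psi$ is the exceptional-zero correction of $\zeta_{K_\psi}$. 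The point of this choice of $K_\psi$ is that $\eta_\psi$ coincides with $\eta(\psi,\cdot)$ from the proposition: if $\psi$ is neither trivial nor quadratic it is never the exceptional character of any extension, so $\eta_\psi\equiv1=\eta(\psi,\cdot)$; and if $\psi$ is trivial or quadratic, the inequality $q(\psi)\leq D_{K_\psi}$ makes ``$\psi\in\Psi_{K/k}(G)$'' equivalent to ``$\psi$ is the exceptional character of $K_\psi/k$'' (via \cref{lem:stark-zfr,lem:stark-artin}), with $\beta_\psi$ then equal to the exceptional zero $\beta_{K_\psi}$, so that $\eta_\psi=\eta(\psi,\cdot)$ identically.

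It then remains to pass from the truncation $\tau_\psi$ to an arbitrary $\tau\geq\max\{q(\psi),e\}$. If $\tau\geq\tau_\psi$, every prime $\kp$ with $\tau_\psi<\N\kp\leq\tau$ lies above the Chebotarev threshold for $K_\psi$, so \cref{lem:subproducts} evaluates $\prod_{\tau_\psi<\N\kp\leq\tau}L_\kp(1,\psi)$ in closed form as the exponential of an integral, and this integral, the factor $(\log\tau/\log\tau_\psi)^{\langle\psi,\mathbf{1}_G\rangle}$, and the ratio $\eta(\psi,\tau)/\eta(\psi,\tau_\psi)$ telescope exactly, reducing to the base estimate. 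If instead $\tau<\tau_\psi$, then the primes $\kp$ with $\tau<\N\kp\leq\tau_\psi$ lie in a multiplicatively bounded window, since $\log\tau_\psi/\log\tau\leq\log\tau_\psi/\log q(\psi)=O_{[k:\Q],|G|}(1)$; on such a window \cref{lem:mertens} gives $\sum_{\tau<\N\kp\leq\tau_\psi}\N\kp^{-1}=O_{[k:\Q],|G|}(1)$, whence by \cref{lem:local-bound} (together with classical Mertens over $\Q$ for the matching lower bound in the case $\psi=\mathbf{1}_G$) one obtains $\prod_{\tau<\N\kp\leq\tau_\psi}L_\kp(1,\psi)\asymp(\log\tau_\psi/\log\tau)^{\langle\psi,\mathbf{1}_G\rangle}$, while $\eta(\psi,\tau)/\eta(\psi,\tau_\psi)\asymp1$ because $(1-\beta_\psi)\log\tau_\psi<\log\tau_\psi/(4\log q(\psi))=O_{[k:\Q],|G|}(1)$ forces the governing integral $\int_{\tau}^{\tau_\psi}\frac{dt}{t^{2-\beta_\psi}\log t}$ to be $O_{[k:\Q],|G|}(1)$. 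Combining these with the base estimate again gives the single-character estimate, and hence the proposition.

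I expect the $\tau<\tau_\psi$ case of the transfer to be the main obstacle. It is precisely there that one must use that $\tau$ lies above the conductor $q(\psi)$ rather than merely above an absolute constant, and it succeeds only because the auxiliary field $K_\psi$ has been engineered so that $\log D_{K_\psi}\asymp_{[k:\Q],|G|}\log q(\psi)$ and because $q(\psi)$ is itself bounded below by an absolute constant. A secondary delicate point is maintaining the exceptional-zero bookkeeping — that the correction $\eta_\psi$ built from $\zeta_{K_\psi}$ is genuinely the factor $\eta(\psi,\cdot)$ appearing in the proposition — consistently across the trivial, quadratic, and generic choices of $K_\psi$.
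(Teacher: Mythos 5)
Your proof follows the same route as the paper's: reduce to irreducible constituents via $\kappa(\chi)=\prod_\psi\kappa(\psi)^{\langle\chi,\psi\rangle}$, pass to the faithful quotient $K^{\ker\psi}/k$, apply \cref{thm:short-euler} there at a Chebotarev-scale truncation, and transfer to the given $T(\psi)$ using the comparability $\log D_{K^{\ker\psi}}\asymp_{[k:\Q],|G|,\psi(1)}\log q(\psi)$ from \cref{lem:conductor-bound}. The paper streamlines the transfer by setting $\widetilde T(\psi):=\max\{T(\psi),\,3(n(\psi)^{n(\psi)}D(\psi))^{c_3}\}$, which is always $\geq T(\psi)$ and so collapses your two cases into one; your two-case handling and your invocation of \cref{lem:mertens} plus classical Mertens for the boundary case are otherwise the same argument.

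One place where you overclaim. Your assertion that, for $\psi$ quadratic, ``$\psi\in\Psi_{K/k}(G)$'' is \emph{equivalent} to ``$\psi$ is the exceptional character of $K_\psi/k$'' with $\eta_\psi=\eta(\psi,\cdot)$ \emph{identically}, is false when $D_k>1$. Since $q(\psi)\leq D_{K_\psi}$, the region $\Re(s)>1-\tfrac{1}{4\log q(\psi)}$ that defines $\Psi_{K/k}(G)$ is \emph{wider} than the region $\Re(s)>1-\tfrac{1}{4\log D_{K_\psi}}$ governing the exceptional zero of $\zeta_{K_\psi}$, so $\beta_\psi$ may lie in the former but not the latter; then $\psi\in\Psi_{K/k}(G)$ while $\psi$ is not the exceptional character of $K_\psi/k$, and \cref{thm:short-euler} outputs the correction factor $1$ even though $\eta(\psi,T(\psi))<1$. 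The repair, which the paper also uses implicitly, is that in this discrepancy case \cref{lem:stark-zfr} applied to $\zeta_{K_\psi}$ forces $1-\beta_\psi\geq\tfrac{1}{4\log D_{K_\psi}}$, whence $\eta(\psi)=(1-\beta_\psi)\log q(\psi)\gg_{[k:\Q],|G|,\psi(1)}1$ by the conductor/discriminant comparability, and then $\eta(\psi,T(\psi))\asymp 1$ by \cref{prop:eta}(i)--(ii). So the two correction factors are merely $\asymp$-comparable, not equal; that suffices for the proposition's $\asymp$ conclusion, but the word ``identically'' should be removed and this short argument supplied.
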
 

\begin{proof} It suffices to show for any irreducible character $\psi$ of $G$ that
\begin{equation} \label{eqn:prop-residue-irreducible}
	\kappa(\psi)  \asymp_{[k:\Q],|G|,\psi(1)}\frac{\eta(\psi, T(\psi))}{(\log T(\psi))^{\langle \psi, \mathbf{1}_G \rangle}} \prod_{\N\kp \leq T(\psi)} L_{\kp}(1,\psi), 
\end{equation}
because $\kappa(\chi) = \prod_{\psi} \kappa(\psi)^{\langle \chi, \psi \rangle}$. Now, fix an irreducible character $\psi$. Define the subfield $K(\psi) = K^{\ker \psi}$ of $K$, so $\psi$ is a faithful character of the Galois extension $K(\psi)/k$ with Galois group $G/\ker \psi$. Set $n(\psi) = [K(\psi):\Q]$ and $D(\psi) = D_{K(\psi)}$ temporarily. 

Apply \cref{thm:short-euler} to $\psi$ as a character of the extension $K(\psi)/k$ with truncation parameter $\widetilde{T}(\psi) = \max\{T(\psi), 3(n(\psi)^{n(\psi)} D(\psi))^{c_3} \}$. This yields 
\[
\kappa(\psi) \asymp_{n(\psi),\psi(1)} \frac{\eta(\psi, \widetilde{T}(\psi))}{(\log \widetilde{T}(\psi))^{\langle \psi, \mathbf{1}_G \rangle}} \prod_{\N\kp \leq \widetilde{T}(\psi)} L_{\kp}(1,\psi). 
\]
It remains to replace every instance of $\widetilde{T}(\psi)$ with $T(\psi)$ in the above estimate because \eqref{eqn:prop-residue-irreducible} would then follow from \cref{lem:local-bound}. 

If $\widetilde{T}(\psi) = T(\psi)$ then we are done.  Otherwise, $q(\psi) \leq T(\psi) \leq \widetilde{T}(\psi) = e(n(\psi)^{n(\psi)} D(\psi) )^{c_3}$ by assumption. Since $\psi$ is a faithful character of $K(\psi)/k$, we have by \cref{lem:conductor-bound} that $\log q(\psi) \asymp_{n(\psi),\psi(1)} \log \widetilde{T}(\psi)$. This implies
\[
\log T(\psi) \asymp_{n(\psi),\psi(1)} \log \widetilde{T}(\psi) 
\]
and hence, by trivially bounding the product via \cref{lem:local-bound}  and using Mertens' formula over $\Q$, we have 
\[
 \prod_{T(\psi) < \N\kp \leq \widetilde{T}(\psi)} L_{\kp}(1,\psi)  \asymp_{n(\psi),\psi(1)} 1. 
\]
Finally, we must show $\eta(\psi,\widetilde{T}(\psi)) \asymp_{n(\psi),\psi(1)} \eta(\psi, T(\psi))$ assuming $q(\psi) \leq T(\psi) \leq \widetilde{T}(\psi)$. We need only consider when $\psi^2 = \mathbf{1}_G$ and $L(s,\psi)$ has a real zero $\beta = \beta_{\psi} > 1 - \frac{1}{4 \log q(\psi)}$. Observe by monotonicity and non-negativity that
\[
0 \leq \int_{T(\psi)}^{\widetilde{T}(\psi)} \frac{1}{t^{2-\beta} \log t} dt \leq  \int_{T(\psi)}^{\widetilde{T}(\psi)} \frac{1}{t\log t} dt =  \log\Big( \frac{\log \widetilde{T}(\psi) }{\log T(\psi) } \Big) \ll_{n(\psi),\psi(1)} 1. 
\]
This proves $\eta(\psi,\widetilde{T}(\psi)) \asymp_{n(\psi),\psi(1)} \eta(\psi, T(\psi))$ in all cases, as required. Collecting all of our observations completes the proof. 
\end{proof}

Our second key proposition estimates short Euler products with variable lengths.  
 
\begin{proposition} \label[proposition]{prop:small-primes}
	Let $K/k$ be a Galois extension of number fields with Galois group $G$.  Index all  irreducible characters $\psi_1,\dots,\psi_N$ of $G$ and assume $\psi_N = \mathbf{1}_G$. Let $T_1,\dots,T_N \geq e$ be real numbers such that $T_1 \geq \cdots \geq T_{N}$ and $T_n \geq q(\psi_n)$ for $1 \leq n \leq N$. Let $\chi$ be any character of $G$ and let $\widetilde{\chi}$ be the induction of $\chi$ to the Galois closure of $K$ over $\mathbb{Q}$. Define $\chi_n := \sum_{i=1}^n \langle \chi, \psi_i \rangle \psi_i$ for $1 \leq n \leq N, \chi_0 \equiv 0$, and
	\[
	P := \prod_{n=1}^N \prod_{\N\kp \leq T_n} L_{\kp}(1,\psi_n)^{\langle \chi, \psi_n \rangle}.
	\]

	Then 
	\[
	  |P| \ll_{[k:\Q],|G|,\chi(1)} (\log T_N)^{\widetilde{\chi}(1) - \chi(1)} \prod_{n=1}^{N} ( \log T_n )^{\chi_n(1)-\chi_{n-1}(1)}
	\]
	and 
	\[
	|P| \gg_{[k:\Q],|G|,\chi(1)} (\log T_N)^{\mu(\widetilde{\chi}) - \mu(\chi)} \prod_{n=1}^N ( \log T_n )^{\mu(\chi_n)-\mu(\chi_{n-1})}.
	\]
\end{proposition}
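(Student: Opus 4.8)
The plan is to take logarithms, collapse $\log|P|$ into a single sum over prime ideals of $k$ in which each prime is weighted by a partial-sum character, sort that sum according to which of the thresholds $T_n$ a given prime survives, and then recover the telescoping exponents in the statement by an Abel summation at the very end. Since each local factor $L_{\kp}(1,\psi)$ is finite and nonzero, $\log|P| = \sum_{n=1}^{N}\langle \chi,\psi_n\rangle \sum_{\N\kp \leq T_n}\log|L_{\kp}(1,\psi_n)|$, and the estimate $\log L_{\kp}(1,\psi) = \psi(\kp)/\N\kp + O(\psi(1)/\N\kp^2)$ of \cref{lem:local-bound} — together with $\sum_{\kp}\N\kp^{-2} = O_{[k:\Q]}(1)$ and $\sum_n \langle\chi,\psi_n\rangle\psi_n(1) = \chi(1)$ — gives $\log|P| = M + O_{[k:\Q],|G|,\chi(1)}(1)$, where $M := \sum_{n=1}^{N}\langle\chi,\psi_n\rangle\sum_{\N\kp\leq T_n}\Re\psi_n(\kp)/\N\kp$. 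Interchanging the order of summation and using $T_1 \geq \cdots \geq T_N$, the contribution of a prime $\kp$ with $\N\kp \leq T_1$ is $\Re\chi_{m(\kp)}(\kp)/\N\kp$, where $m(\kp) := \max\{n : T_n \geq \N\kp\}$; here I use the linearity of $\phi \mapsto \phi(\kp)$ and that $\chi_m$ is an honest character of $G$, so \cref{lem:local-bound} applies to it. Grouping primes by the value of $m(\kp)$ — which equals $N$ precisely when $\N\kp \leq T_N$ (where $\chi_N = \chi$) and equals $n$ precisely when $T_{n+1} < \N\kp \leq T_n$ — I would write $M = M_0 + M_1$ with $M_0 := \sum_{\N\kp \leq T_N}\Re\chi(\kp)/\N\kp$ and $M_1 := \sum_{n=1}^{N-1}\sum_{T_{n+1} < \N\kp \leq T_n}\Re\chi_n(\kp)/\N\kp$.

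For $M_1$ the crucial observation is that $\psi_N = \mathbf{1}_G$ does not occur in $\chi_n$ when $n \leq N-1$, so $\langle\chi_n,\mathbf{1}_G\rangle = 0$, which forces $\mu(\chi_n) \leq 0$ (a character whose average vanishes cannot have strictly positive minimum real part). Then \cref{lem:local-bound} gives $\mu(\chi_n) \leq \Re\chi_n(\kp) \leq \chi_n(1)$, and \cref{lem:mertens} applies to each range $(T_{n+1}, T_n]$ because $T_{n+1} \geq q(\psi_{n+1}) = |\mathrm{Disc}(k)|^{\psi_{n+1}(1)}\N\mathfrak{f}_{\psi_{n+1}} \geq D_k$ and $T_{n+1}\geq e$, yielding $\sum_{T_{n+1} < \N\kp \leq T_n}\N\kp^{-1} \leq \log\log T_n - \log\log T_{n+1} + O_{[k:\Q]}(1)$. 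Using $\mu(\chi_n) \leq 0$ to reverse this inequality in the lower estimate, and summing over the at most $|G|$ values of $n$ (with $|\mu(\chi_n)|, \chi_n(1) \leq \chi(1)$), $M_1$ lies between $\sum_{n=1}^{N-1}\mu(\chi_n)\bigl(\log\log T_n - \log\log T_{n+1}\bigr)$ and $\sum_{n=1}^{N-1}\chi_n(1)\bigl(\log\log T_n - \log\log T_{n+1}\bigr)$, each up to $O_{[k:\Q],|G|,\chi(1)}(1)$.

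The term $M_0$ is the delicate one: $\sum_{\N\kp \leq T_N}\N\kp^{-1}$ can be as large as $\asymp [k:\Q]\log\log T_N$ (because of primes of $k$ above small rational primes), so estimating $M_0$ directly in terms of $\mu(\chi)$ over $k$ is too lossy for the lower bound. Instead I would descend to $\Q$: the induction identity $L_p(1,\widetilde\chi) = \prod_{\kp \mid p}L_{\kp}(1,\chi)$ (the Euler-factor form of the invariance $L(s,\chi)=L(s,\widetilde\chi)$) together with \cref{lem:local-bound} on both sides gives $\sum_{\kp \mid p}\chi(\kp)/\N\kp = \widetilde\chi(p)/p + O([k:\Q]\chi(1)/p^2)$, while the primes $\kp \mid p$ with $p \leq T_N < \N\kp$ contribute only $O_{[k:\Q],\chi(1)}(1)$ to $M_0$ in total; hence $M_0 = \sum_{p \leq T_N}\Re\widetilde\chi(p)/p + O_{[k:\Q],\chi(1)}(1)$. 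Applying \cref{lem:local-bound} to $\widetilde\chi$ (so that $\mu(\widetilde\chi) \leq \Re\widetilde\chi(p) \leq \widetilde\chi(1)$) together with Mertens' formula over $\Q$ shows that $M_0$ lies between $\mu(\widetilde\chi)\log\log T_N$ and $\widetilde\chi(1)\log\log T_N$, up to $O_{[k:\Q],\chi(1)}(1)$.

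Finally I would add the estimates for $M_0$ and $M_1$ and apply an Abel summation, which turns $\sum_{n=1}^{N-1}c_n(\log\log T_n - \log\log T_{n+1})$ into $\sum_{n=1}^{N-1}(c_n - c_{n-1})\log\log T_n - c_{N-1}\log\log T_N$ for any sequence $(c_n)$ with $c_0 = 0$ — taking $c_n = \chi_n(1)$ for the upper bound and $c_n = \mu(\chi_n)$ for the lower bound, which is legitimate since $\chi_0 \equiv 0$. Using $\widetilde\chi(1) - \chi_{N-1}(1) = (\widetilde\chi(1) - \chi(1)) + (\chi_N(1) - \chi_{N-1}(1))$ and the identical identity with $\mu$, the coefficients of $\log\log T_N$ reassemble to give $\log|P| \leq (\widetilde\chi(1) - \chi(1))\log\log T_N + \sum_{n=1}^{N}(\chi_n(1) - \chi_{n-1}(1))\log\log T_n + O_{[k:\Q],|G|,\chi(1)}(1)$ and the analogous lower bound with every degree replaced by the corresponding value of $\mu$; exponentiating yields the two claimed estimates. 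I expect the handling of $M_0$ to be the main obstacle — one must recognize that the combinatorially natural bound over $k$ is not strong enough and that transferring to $\Q$ via induction is exactly what replaces $[k:\Q]\mu(\chi)$ and $[k:\Q]\chi(1)$ by the sharper $\mu(\widetilde\chi)$ and $\widetilde\chi(1)$ on $\log T_N$ — while the observation that $\mu(\chi_n) \leq 0$ for $n < N$, which lets the one-sided \cref{lem:mertens} suffice in both directions for $M_1$, is a secondary but similarly load-bearing point.
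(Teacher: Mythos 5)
Your proposal is correct and follows essentially the same path as the paper: the paper factors $P = \prod_{n=1}^N P_n$ with $P_n = \prod_{T_{n+1}<\N\kp\leq T_n} L_{\kp}(1,\chi_n)$ (your grouping by $m(\kp)$), bounds $P_n$ for $n<N$ using $\mu(\chi_n)<0$ and the one-sided Mertens estimate, and bounds $P_N$ by passing to rational primes via $\widetilde{\chi}(p)=\sum_{\N\kp=p}\chi(\kp)$ — the same descent to $\Q$ you use. Your additive presentation via $\log|P|=M_0+M_1$ and a closing Abel summation is just the logarithmic form of the paper's multiplicative telescoping.
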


\begin{remark}
Notice the upper bound does not depend on the ordering of $\psi_1,\dots,\psi_N$, because the map $\chi \mapsto \chi(1)$ is linear and hence $\chi_{n}(1) - \chi_{n-1}(1) = \langle \chi, \psi_n \rangle \psi_n(1)$. On the other hand, the lower bound may depend on the ordering because the map $\chi \mapsto \mu(\chi)$ is sublinear and hence  $\mu(\chi_n) - \mu(\chi_{n-1}) \geq \langle \chi, \psi_n \rangle \mu(\psi_n)$. 	
\end{remark}
\begin{remark}
	Note $\widetilde{\chi}(1) = [k:\Q] \chi(1)$ and, if $\mu(\chi) < 0$, then $\mu(\widetilde{\chi}) \geq [k:\Q] \mu(\chi)$. 
\end{remark}
\begin{remark}
The $n=N$ contribution to the upper bound is
\[
(\log T_N)^{\widetilde{\chi}(1)- \chi(1) + \chi_N(1) -\chi_{N-1}(1)} = (\log T_N)^{\widetilde{\chi}(1) - \chi_{N-1}(1)}
\]
since $\chi = \chi_N$. The same follows for the lower bound. We have included the $n=N$ term in the product so that the telescoping cancellation will be more apparent when proving our main theorem. 
\end{remark}

\begin{proof}   Define $T_{N+1} := 1$ and, for $1 \leq n \leq N$,
	\[
	P_n := \prod_{T_{n+1} < \N\kp \leq T_n} L_{\kp}(1,\chi_n), \quad \text{ so } \quad P_n = \prod_{i=1}^n \prod_{T_{n+1} < \N\kp \leq T_n} L_{\kp}(1,\psi_i)^{\langle \chi, \psi_i\rangle}
	\] 
	by linearity of Artin $L$-functions and the definition of $\chi_n$. 	It follows that 
	\[
	P = P_1 \cdots P_N . 
	\]
	For $1 \leq n \leq N-1$, we have $\langle \chi_n, \psi_N \rangle= \langle \chi_n, \mathbf{1}_G \rangle =0$ by construction, so $\mu(\chi_n) < 0 < \chi_n(1)$. Note $\mu(\chi_n) < 0$ since $\sum_{g \in G} \chi_n(g) = \langle \chi_n,\mathbf{1}_G \rangle = 0$ and $\chi_n(1) > 0$.   Therefore, \cref{lem:local-bound,lem:mertens} imply that 
	\[
|P_n| \leq \prod_{T_{n+1} < \N\kp \leq T_n} \exp\Big(\frac{\chi_n(1)}{\N\kp} + \frac{2\chi_n(1)}{\N\kp^2} \Big)  \ll_{[k:\Q],\chi(1)} \Big( \frac{\log T_n}{\log T_{n+1}}\Big) ^{\chi_n(1)}
	\]
	and, as $\mu(\chi_n) < 0$, 
	\[
	|P_n| \geq  \prod_{T_{n+1} < \N\kp \leq T_n} \exp\Big(\frac{\mu(\chi_n)}{\N\kp} - \frac{2\chi_n(1)}{\N\kp^2} \Big) \gg_{[k:\Q],\chi(1)} \Big( \frac{\log T_n}{\log T_{n+1}}\Big) ^{\mu(\chi_n)}. 
	\]
	For $n=N$, we have $\psi_N = \mathbf{1}_G$ and $\chi_N = \chi$. We shall bound the product $P_N$ trivially using rational primes.  Since $\widetilde{\chi}(p) = \sum_{\N\kp = p} \chi(\kp)$, \cref{lem:local-bound} implies that 
	\[
	\log |P_{N}| = \sum_{\N\kp \leq T_{N}}   \frac{\chi(\kp)}{\N\kp} + O\Big(  \sum_{\N\kp \leq T_{N}} \frac{\chi(1)}{\N\kp^2} \Big) = \sum_{p \leq T_N} \frac{\widetilde{\chi}(p)}{p} + O( \chi(1) [k:\Q] ). 
	\]
	As $\mu(\widetilde{\chi}) \leq \widetilde{\chi}(p) \leq \widetilde{\chi}(1)$ from \cref{lem:local-bound} again, it follows by Mertens' formula over $\Q$ that 
\[
(\log T_{N})^{\mu(\widetilde{\chi})} \ll_{[k:\Q],\chi(1)} |P_N|   \ll_{[k:\Q],\chi(1)} (\log T_{N})^{\widetilde{\chi}(1)}. 
\]	
Collecting our observations yields the result. 
\end{proof}
\begin{remark} For $1 \leq n  < N$, the condition that $\mu(\chi_n) < 0$ is critically used in the proof to apply an asymptotically sharp \textit{upper} bound on $\sum_{T_{n+1} < \N\kp \leq T_n} 1/\N\kp$  via \cref{lem:mertens}. A sharp lower bound for this quantity is unavailable precisely due to the potential presence of a Landau--Siegel zero for $\zeta_k(s)$. 	For $n=N$, we rewrite the product over rational primes and appeal to Mertens' asymptotic formula over $\Q$ which is valid for all values of $\mu(\widetilde{\chi}) \in \mathbb{R}$. 
\end{remark}

Our third and final key proposition provides estimates for $\eta(\psi,T(\psi))$ and $\eta(\psi)$  for the exceptional characters $\psi \in \Psi_{K/k}(G)$, which are defined in \eqref{eqn:eta-T-variable-def}, \eqref{eqn:eta-def}, and \eqref{eqn:exceptional-quadratics} respectively.

\begin{proposition} \label[proposition]{prop:eta}
		Let $K/k$ be a Galois extension of number fields with Galois group $G$, and $\Psi_{K/k}(G)$ defined by \eqref{eqn:exceptional-quadratics}. Let $ ( T(\psi) )_{\psi}$ be a tuple of real numbers indexed by the irreducible characters $\psi$ of $G$  such that $T(\psi) \geq \max\{ q(\psi), e\}$. Let $\eta(\psi)$ and $\eta(\psi,T(\psi))$ be defined by \eqref{eqn:eta-def} and \eqref{eqn:eta-T-variable-def} respectively. 
		 All of the following hold: 
		\begin{enumerate}[label=(\roman*)]
			\item  For every $\psi \in \Psi_{K/k}(G)$, we have  $0 <\eta(\psi), \eta(\psi,T(\psi))  \leq 1$ and $\eta(\psi) \leq 2   \eta(\psi,T(\psi)).$ 
			\item For every $\psi \in \Psi_{K/k}(G)$, if $T(\psi) \leq q(\psi)^A$ for some $A \geq 1$ then  $\eta(\psi,T(\psi)) \leq e^{A}  \eta(\psi)$. 
			\item For every $\psi \in \Psi_{K/k}(G)$, we have 
			\[
				\eta(\psi) \gg_{[k:\Q]} \frac{\log q(\psi) }{(D_k q(\psi))^{1/2[k:\Q]} }. 
			\]
			\item For any character $\chi$ of $G$,  we have 
				\[
				\prod_{\psi} \eta(\psi)^{\langle \chi, \psi\rangle} \gg_{[k:\Q], \chi(1)}  \min\Big\{ \Big( \frac{\log q(\psi) }{(D_k q(\psi))^{1/2[k:\Q]} }   \Big)^{\langle \chi, \psi \rangle} : \psi \in \Psi_{K/k}(G)  \Big\}. 
				\]
		\end{enumerate}
\end{proposition}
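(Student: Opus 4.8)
The plan is as follows. For \textbf{(i)} and \textbf{(ii)}, we first simplify \eqref{eqn:eta-T-variable-def}. For $\psi\in\Psi_{K/k}(G)$, the substitutions $u=\log t$ and then $v=(1-\beta_\psi)u$ convert the integral into the exponential integral, giving the clean formula
\[
\eta(\psi,\tau)=\exp\bigl(-E_1\bigl((1-\beta_\psi)\log\tau\bigr)\bigr),\qquad E_1(x):=\int_x^\infty\frac{e^{-t}}{t}\,dt .
\]
Since $\psi\in\Psi_{K/k}(G)$ forces $0<(1-\beta_\psi)\log q(\psi)=\eta(\psi)<\tfrac14$, and since $E_1$ is positive, strictly decreasing, and (by splitting $\int_x^\infty$ at $t=1$) satisfies $-1-\log x\leq E_1(x)\leq e^{-1}-\log x$ for $0<x\leq1$, part (i) follows quickly: convergence and positivity of the integral give $0<\eta(\psi,\tau)<1$; the upper bound on $E_1$ gives $\eta(\psi,q(\psi))\geq e^{-1/e}\eta(\psi)\geq\tfrac12\eta(\psi)$, and $\eta(\psi,T(\psi))\geq\eta(\psi,q(\psi))$ since $T(\psi)\geq q(\psi)$ and $\eta(\psi,\cdot)$ is increasing. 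For (ii): if $A\eta(\psi)>1$ then $e^{A}\eta(\psi)>e^{A}/A\geq e>1\geq\eta(\psi,T(\psi))$; otherwise $(1-\beta_\psi)\log T(\psi)\leq A\eta(\psi)\leq1$, so the lower bound on $E_1$ yields $\eta(\psi,T(\psi))\leq e\,(1-\beta_\psi)\log T(\psi)\leq eA\,\eta(\psi)\leq e^{A}\eta(\psi)$, using $eA\leq e^{A}$ for $A\geq1$. Part \textbf{(iii)} is then immediate from $\eta(\psi)=(1-\beta_\psi)\log q(\psi)$ and the lower bound for $1-\beta_\psi$ in \cref{lem:stark-effective}.

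The substance lies in \textbf{(iv)}. Writing $B(\psi):=\log q(\psi)\big/(D_kq(\psi))^{1/2[k:\Q]}$, we may assume $\Psi:=\Psi_{K/k}(G)$ is nonempty and that $\langle\chi,\psi\rangle\geq1$ for some $\psi\in\Psi$ (otherwise both sides equal $1$). Let $\psi_0\in\Psi$ be the character with $\langle\chi,\psi_0\rangle\geq1$ maximizing $\beta_{\psi_0}$, and set $m_0=\langle\chi,\psi_0\rangle$. Since $\eta(\psi)=1$ for $\psi\notin\Psi$ and $B(\psi_0)^{m_0}$ is one of the quantities appearing in the minimum on the right, it suffices to prove
\[
\prod_{\psi\in\Psi}\eta(\psi)^{\langle\chi,\psi\rangle}\;\gg_{[k:\Q],|G|,\chi(1)}\;B(\psi_0)^{m_0}.
\]
By \cref{lem:stark-effective} the ratio $R:=\eta(\psi_0)/B(\psi_0)=(1-\beta_{\psi_0})(D_kq(\psi_0))^{1/2[k:\Q]}$ satisfies $c_{[k:\Q]}\leq R<(D_kq(\psi_0))^{1/2[k:\Q]}/(4\log q(\psi_0))$, the upper bound because $\psi_0\in\Psi$. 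For each remaining component $\psi\in\Psi\setminus\{\psi_0\}$ we apply \cref{lem:deuring-heilbronn} to the pair $(\psi_0,\psi)$, in which $\psi_0$ is the more exceptional by our choice: the weak bound gives $\eta(\psi)\geq\tfrac1{24}$ when $q(\psi)\geq q(\psi_0)$, while when $q(\psi)<q(\psi_0)$ the weak and strong bounds together give $\eta(\psi)\gg\frac{\log q(\psi)}{\log q(\psi_0)}\,H$, where $H\asymp\max\bigl\{1,\,-\log\bigl((1-\beta_{\psi_0})\log q(\psi_0)\bigr)\bigr\}\geq1$ is large precisely when $\psi_0$ is very exceptional (so that the repulsion is strong). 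Multiplying these over the (at most $\chi(1)$ with multiplicity) components, each raised to its exponent $\leq\chi(1)$, and inserting $\eta(\psi_0)^{m_0}=R^{m_0}B(\psi_0)^{m_0}$, the displayed claim reduces to the one-variable inequality
\[
R^{m_0}\Bigl(\min\{1,\,H/\log q(\psi_0)\}\Bigr)^{\chi(1)}\;\gg_{[k:\Q],|G|,\chi(1)}\;1 .
\]

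To finish, observe that, as a function of the single parameter $R$, the left side has (up to constant factors) the shape $R^{m_0}(\mathrm{const}-\log R)^{\chi(1)}$, which is log-concave in $\log R$, so its minimum over $R\in\bigl[c_{[k:\Q]},(D_kq(\psi_0))^{1/2[k:\Q]}/(4\log q(\psi_0))\bigr)$ is attained at an endpoint. At the small end $R\asymp_{[k:\Q]}1$ one has $H\asymp_{[k:\Q]}\log q(\psi_0)$ (using $(D_kq(\psi_0))^{1/2[k:\Q]}\geq q(\psi_0)^{1/2[k:\Q]}$), so the bracket is $\gg_{[k:\Q]}1$; at the large end $R$ exceeds any fixed power of $\log q(\psi_0)$, hence absorbs the bracket, which is $\gg(\log q(\psi_0))^{-\chi(1)}$. (If $q(\psi_0)\ll_{[k:\Q]}1$, the conductor--discriminant bound $q(\psi_0)\geq D_k$ of \cref{lem:conductor-bound} forces all relevant $B(\psi)\asymp_{[k:\Q]}1$ and one argues directly.) The main obstacle throughout (iv) is exactly this balancing: Siegel's bound for $1-\beta_{\psi_0}$ is weak enough that $\eta(\psi_0)$ may be as small as $B(\psi_0)$, whereas the Deuring--Heilbronn repulsion controlling the other exceptional characters is strong only when $\beta_{\psi_0}$ is itself extremely close to $1$, so the two effects must be played off against each other rather than bounded separately.
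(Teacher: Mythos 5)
Your proof is correct and follows essentially the same route as the paper, but two pieces are handled with notably different technical details that are worth pointing out.

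For (i) and (ii), the paper works directly with the integral $\int_\tau^\infty t^{\beta_\psi - 2}(\log t)^{-1}\,dt$ via a dyadic decomposition over intervals $[\tau^j,\tau^{j+1}]$, summing the resulting geometric-type series and using elementary inequalities such as $1-e^{-u}\geq u/2$ and $e^{-u}\geq 1-u$. Your reformulation $\eta(\psi,\tau)=\exp(-E_1((1-\beta_\psi)\log\tau))$ via the exponential integral, combined with the two-sided bound $-1-\log x\leq E_1(x)\leq e^{-1}-\log x$ on $(0,1]$, is cleaner and packages the same computation; you should note that the lower bound on $E_1$ needs $e^{-t}\geq 1-t$ on the split $\int_x^1$, not the coarser $e^{-t}\geq e^{-1}$, but this is straightforward.

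For (iv), both arguments choose the same reference character (your $\psi_0$, the paper's $\psi_1$) as the one among the relevant members of $\Psi_{K/k}(G)$ with largest real zero, apply the weak Deuring--Heilbronn bound from \cref{lem:deuring-heilbronn} when $q(\psi)\geq q(\psi_0)$ and the strong repulsion bound when $q(\psi)<q(\psi_0)$, and then must balance the factor $\eta(\psi_0)^{m_0}$ against the repulsion term $\min\{1,H/\log q(\psi_0)\}^{\chi(1)}$. The paper disposes of this with the one-line claim that the combined expression is ``minimized when $\beta_1$ is maximized''; in fact the expression $x^{m_1}\min\{1,-\log x/\log q_1\}^{\chi(1)}$ in $x=(1-\beta_1)\log q_1$ is not monotone on $(0,1/4]$ (it has an interior critical point on the second branch), so the paper's phrasing is imprecise even though the resulting bound is correct. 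Your log-concavity observation, that $u\mapsto m_0u+\chi(1)\log\min\{1,(-u-\log B(\psi_0))/\log q(\psi_0)\}$ is concave (note that the $\max\{1,\cdot\}$ in $H$ never saturates since $\eta(\psi_0)<1/4<e^{-1}$, which is what keeps the pieces concave across the join), followed by checking both endpoints, is a more careful way to carry out the same optimization and in my view is the more honest argument. Your endpoint computations, and the clean-up for bounded $q(\psi_0)$ via $q(\psi_0)\geq D_k$, check out.
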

\begin{remark}
In light of (iii), it might be surprising that (iv) is possible, since (iv) essentially concentrates the lower bound on a single character's worst-case contribution. This feature is achieved by carefully exploiting zero repulsion effects between real zeros from \cref{lem:deuring-heilbronn}. 
\end{remark}
\begin{proof} For (i), the claim $0 < \eta(\psi), \eta(\psi,T(\psi)) \leq 1$ is immediate  since $0 < (1-\beta_{\psi}) \log q(\psi) < 1/4$ by definition of $\Psi_{K/k}(G)$.  For the other claim, denote $T = T(\psi)$ and $\beta = \beta_{\psi}$ for simplicity. By a dyadic decomposition, we have that
	\begin{equation} \label{eqn:eta-T-dyadic}
	\eta(\psi,T) = 
		\exp\Big( -   \int_{T}^{\infty} \frac{1}{t^{2-\beta} \log t} dt \Big)  = \exp\Big( - \sum_{j=1}^{\infty} \int_{T^j}^{T^{j+1}} \frac{1}{t^{2-\beta} \log t} dt \Big).
	\end{equation}
	By monotonicity, the infinite sum of integrals is at most
	\[
	 \sum_{j=1}^{\infty}  T^{-j(1-\beta)} 	\int_{T^j}^{T^{j+1}}\frac{1}{t \log t} dt	 \leq \sum_{j=1}^{\infty}  \frac{1}{j}T^{-j(1-\beta)} = - \log\big(1- T^{-(1-\beta)} \big)
	\]
	since $\log(1+\frac{1}{j}) \leq \frac{1}{j}$ and $\sum_{j=1}^{\infty} u^j/j = -\log(1-u)$ for $0 < u < 1$. Therefore, as $T \geq q(\psi)$, 
	\[
	\eta(\psi,T)  
	\geq  1- q(\psi)^{-(1-\beta)} \geq \frac{1}{2}  (1-\beta) \log q(\psi)   = \frac{1}{2} \eta(\psi)
	\]
	since $1-e^{-u} \geq u/2$ for $0 < u < 1/4$. This establishes (i). 	
	
	For (ii), the argument is similar to (i). 	Set $\beta = \beta_{\psi}, T = T(\psi),$ and $T_A = \exp(A/(1-\beta))$ for simplicity, so $T_A \geq q(\psi)^{4A} \geq T^4$ by assumption. As $e^{-u} \geq 1-u$ for $u > 0$, it follows that 
	\[
	\int_{T}^{T_A} \frac{1}{t^{2-\beta} \log t} dt \geq \int_{T}^{T_A} \frac{1}{t \log t} dt - (1-\beta) \int_{T}^{T_A} \frac{
	\log t}{t \log t}dt  \geq \log\Big(\frac{\log T_A}{\log T} \Big) - A. 
	\]
	From \eqref{eqn:eta-T-dyadic} and non-negativity, it follows that
	\[
	\eta(\psi,T) \leq \exp\Big( - \int_{T}^{T_A} \frac{1}{t^{2-\beta} \log t} dt \Big) \leq   \frac{e^A \log T}{\log T_A}  
	\leq  e^A (1-\beta) \log q(\psi) = e^A \eta(\psi),
	\]
	as required. This establishes (ii). 

	For (iii), this follows immediately from \cref{lem:stark-effective} and the definition of $\Psi_{K/k}(G)$. 
		
	For (iv), let $\psi_1,\dots,\psi_N$ be the complete list of characters belonging to $\Psi_{K/k}(G)$ such that $\langle \chi, \psi_i \rangle \geq 1$ for every $1 \leq i \leq N$. For $1 \leq i \leq N$, denote the analytic conductor by $q_i = q(\psi_i) \geq 2$ and its real zero by $\beta_i = \beta_{\psi_i}$, i.e. $L(\beta_i,\psi_i) = 0$ and $\beta_i > 1-\frac{1}{4 \log q_i}$. Without loss of generality, assume that $\beta_1 \geq \cdots \geq \beta_N$. By definition of $\eta(\psi)$, it follows that 
	\begin{equation} \label{eqn:eta-factored}
	\prod_{\psi} \eta(\psi)^{\langle \chi, \psi\rangle} = 	\prod_{i=1}^N \eta(\psi_i)^{\langle \chi, \psi_i\rangle} = \prod_{i=1}^N  \big( (1-\beta_i) \log q_i \big)^{\langle \chi, \psi_i\rangle}. 
	\end{equation} 
	Fix $2 \leq i \leq N$. We shall give a lower bound for $(1-\beta_i) \log q_i$ in two cases. 
	\begin{itemize}
		\item Assume $q_1 < q_{i}$. 	Since $\beta_1 \geq \beta_{i}$,   \cref{lem:deuring-heilbronn}(i)   implies that 
	\[
	(1-\beta_{i}) \log q_{i} \geq  \frac{1}{2} (1-\beta_{i}) \log(q_1 q_{i}) > \frac{1}{24}.  
	 \]
	 	\item Assume $q_1 \geq q_{i}$. Since $\beta_1 \geq \beta_{i}$, \cref{lem:deuring-heilbronn}(ii) implies that 
	\begin{align*}
	(1-\beta_i) \log q_i 
	& \geq \frac{\log q_i}{\log(q_1 q_i)} \max\Big\{ \frac{1}{12} ,  c_1^{-1} \log\Big(\frac{c_2}{(1-\beta_1) \log( q_1 q_i) } \Big) \Big\} \\ 
	& \gg \frac{1}{\log q_1} \log\Big(\frac{1}{(1-\beta_1)\log q_1} \Big)
	\end{align*}
	\end{itemize}
	Overall, these cases and \eqref{eqn:eta-factored} imply that 
	\[
	\prod_{\psi} \eta(\psi)^{\langle \chi, \psi\rangle} \gg_{\chi(1)} \big( (1-\beta_1) \log q_1 \big)^{\langle \chi, \psi_1\rangle}  \min\Big\{ 1, \frac{1}{\log q_1}  \log\Big(\frac{1}{(1-\beta_1) \log q_1 } \Big) \Big\}^{\chi(1)}. 
	\]
	Since $\langle \chi, \psi_1 \rangle \geq 1$, the righthand expression is minimized when $\beta_1$ is maximized. From \cref{lem:stark-effective}, we conclude that
	\[
	\prod_{\psi} \eta(\psi)^{\langle \chi, \psi\rangle} \gg_{\chi(1), [k:\Q]} \Big(\frac{\log q_1}{(D_k q_1)^{1/2[k:\Q]} }\Big)^{\langle \chi, \psi_1 \rangle},
	\]
	which establishes (iv).  
\end{proof}

We conclude this section by establishing \cref{prop:residue-optimal}. 
\begin{proof}[Proof of \cref{prop:residue-optimal}]
The proposition follows from \cref{prop:residue} with $T(\psi) = q(\psi)$ for every nontrivial $\psi$ and $T(\mathbf{1}_G) = e D_k$, and the observation $\eta(\psi) \asymp \eta(\psi,T(\psi))$ from \cref{prop:eta}(i) and (ii). 
\end{proof}

%
%
%
%
%
%

\section{Proofs of Theorems 1.1 and 1.2 and Corollaries 1.3 and 1.4} \label{sec:proofs-main}

For all proofs, let $K/k$ be a Galois extension of number fields with Galois group $G$. Let $\Psi_{K/k}(G) \subseteq \mathrm{Irr}(G)$ be the set of potentially exceptional characters associated to $K/k$ defined by \eqref{eqn:exceptional-quadratics}. Let $\chi$ be any character of $G$ and let $\widetilde{\chi}$ be its induction to the Galois closure $\widetilde{K}$ of $K$ over $\Q$. If $\widetilde{\chi}$ is an integer multiple of the trivial character over $\Q$, then $L(s,\chi) = L(s,\widetilde{\chi}) = \zeta(s)^m$ for some integer $m \geq 1$, so $\kappa(\chi) = 1$ and there is nothing to prove. We may therefore assume for all proofs that $\widetilde{\chi}$ includes some nontrivial component and hence $q(\chi) = q(\widetilde{\chi}) \geq 3$. This lower bound on conductors can be deduced, for example, from Minkowski's classical lower bound on discriminants or from work of Odlyzko \cite{Odlyzko}.

\begin{proof}[Proof of \cref{thm:disc}] Let $A \geq 1$ be sufficiently large, depending at most on $[k:\Q], |G|,$ and $\chi(1)$. 	From \cref{thm:short-euler} with $T = 3([K:\Q]^{[K:\Q]} D_K)^{A}$, we have that  
	\begin{equation} \label{eqn:proof-disc-step1}
	\kappa(\chi) \asymp_{[k:\Q],|G|,\chi(1)} \frac{\widetilde{\eta}(\chi,T)}{(\log T)^{\langle \chi, \mathbf{1}_G \rangle}} \prod_{\N\kp \leq T} L_{\kp}(1,\chi). 
	\end{equation}
	Denoting $N =|\mathrm{Irr}(G)|$, we apply \cref{prop:small-primes} with $T_1 = \cdots = T_N = T$. The telescoping nature of the exponents yields
	\begin{equation} \label{eqn:proof-disc-step2}
	(\log T)^{\mu(\widetilde{\chi})} \ll_{[k:\Q],|G|,\chi(1)}   \prod_{\N\kp \leq T} L_{\kp}(1,\chi) \ll_{[k:\Q],|G|,\chi(1)} (\log T)^{\widetilde{\chi}(1)} 
	\end{equation}
	 If $\psi_{K/k}$ does exist, then $\widetilde{\eta}(\chi,T) = \eta(\psi_{K/k},T)^{\langle \chi, \psi_{K/k}\rangle}$ by comparing definitions with \eqref{eqn:eta-T-variable-def}. By \cref{prop:eta}(i) and (iii), it follows that 
	\[
	\Big( \frac{\log q(\psi_{K/k})}{(D_k q(\psi_{K/k}))^{1/2[k:\Q]} )} \Big)^{\langle \chi, \psi_{K/k} \rangle} \ll_{[k:\Q], |G|, \chi(1)} (\tfrac{1}{2}\eta(\psi_{K/k}))^{\langle \chi, \psi_{K/k}\rangle} \leq \widetilde{\eta}(\chi,T) \leq 1. 
	\]
	If $F$ is the quadratic or trivial extension of $k$ defined by $\psi_{K/k}$, then $D_k q(\psi_{K/k}) = D_F \leq D_K^{2/|G|}$ when $F$ is quadratic and $D_k q(\psi_{K/k}) = D_F^2 \leq D_K^{2/|G|}$ when $F$ is trivial. Either way, as the function $x \mapsto \frac{\log x}{x^{1/m}}$ for a positive constant $m>0$ is decreasing for $x \geq e^m$, we have whenever $D_k \geq e^{[k:\Q]}$ that 
	\[
\frac{\log q(\psi_{K/k})}{(D_k q(\psi_{K/k}))^{1/2[k:\Q]} )} \gg_{[k:\Q],|G|} \frac{\log D_K}{D_K^{1/[K:\Q]} }.
\]
For fields $k$ with $D_k < e^{[k:\Q]}$, the inequality above holds trivially. Therefore, we have that 
\begin{equation} \label{eqn:proof-disc-step3}
\Big( \frac{\log D_K}{ D_K^{1/[K:\Q]} } \Big)^{\nu(\chi)} \ll_{[k:\Q],|G|, \chi(1)} \widetilde{\eta}(\chi,T) \leq 1, 
\end{equation}
	where $\nu(\chi)$ is defined in \cref{thm:disc}. 
	
If $\psi_{K/k}$ does not exist, then $\widetilde{\eta}(\chi,T) = 1$ so the above bound is still valid. Since  $\log T \asymp_{[k:\Q],|G|,\chi(1)} \log D_K$, estimates \eqref{eqn:proof-disc-step1}, \eqref{eqn:proof-disc-step2}, and \eqref{eqn:proof-disc-step3} complete the proof.  
\end{proof}

\begin{proof}[Proof of \cref{thm:cond}]
	First, we invoke \cref{prop:residue} with $T(\psi) = q(\chi) \geq 3$ for every nontrivial irreducible character $\psi$ of $G$ and $T(\mathbf{1}_G) = D := \max\{D_k,3\}$. This yields  
	\begin{equation}
	\label{eqn:proof-main-residue}
	\kappa(\chi) \asymp_{[k:\Q],|G|,\chi(1)} \frac{E \cdot P}{(\log eD_k)^{\langle \chi, \mathbf{1}_G \rangle} }, 
	\end{equation}
	where
	\[
	E := \prod_{\psi} \eta(\psi,T(\psi))^{\langle \chi, \psi \rangle}  \quad \text{ and } \quad P := \prod_{\N\kp \leq D} |L_{\kp}(1,\mathbf{1}_G)|^{\langle \chi, \mathbf{1}_G\rangle} \times \prod_{\psi \neq \mathbf{1}_G} \prod_{\N\kp \leq q(\chi) } |L_{\kp}(1,\psi)|^{\langle \chi, \psi\rangle}.
	\]
	
Next, denoting $N$ to be the number of irreducible characters of $G$, we apply \cref{prop:small-primes} to the product $P$ with $T_1 = \cdots = T_{N-1} = q(\chi)$ and $T_N = D$.  Here we have used that $q(\chi) \geq D$ and hence $T_{N-1} \geq T_N$ with $\psi_N = \mathbf{1}_G$. By the telescoping nature of the resulting bounds and the observation that $\chi_N = \chi$, we deduce that
\begin{equation}
\label{eqn:proof-main-P-upper}
P \ll_{[k:\Q],|G|,\chi(1)} (\log eD_k)^{\widetilde{\chi}(1)-\chi_{N-1}(1)} (\log q(\chi) )^{\chi_{N-1}(1)}
\end{equation}
and 
\begin{equation}
\label{eqn:proof-main-P-lower}
P \gg_{[k:\Q],|G|,\chi(1)} (\log eD_k)^{\mu(\widetilde{\chi})-\mu(\chi_{N-1})} (\log q(\chi) )^{\mu(\chi_{N-1})}, 
\end{equation}
where $\chi_{N-1} = \chi - \langle \chi, \mathbf{1}_G \rangle \mathbf{1}_G$. As $\mathbf{1}_G$ is constant, notice that
\begin{equation} \label{eqn:proof-Nminus1}
\chi_{N-1}(1) = \chi(1) - \langle \chi,\mathbf{1}_G \rangle \quad \text{ and } \quad \mu(\chi_{N-1}) = \mu(\chi) - \langle \chi,\mathbf{1}_G \rangle. 
\end{equation}
Finally, we estimate $E$. For the upper bound, we have $E \leq 1$ by \cref{prop:eta}(i).   Combined with \eqref{eqn:proof-main-residue}, \eqref{eqn:proof-main-P-upper}, and \eqref{eqn:proof-Nminus1}, this implies the desired upper bound. For the lower bound, we have $E \gg_{[k:\Q],\chi(1)} \epsilon(\chi)$ by \cref{prop:eta}(iii), where $\epsilon(\chi)$ is defined by \eqref{eqn:epsilon-def}. Combined with \eqref{eqn:proof-main-residue}, \eqref{eqn:proof-main-P-lower}, and \eqref{eqn:proof-Nminus1}, this yields the desired lower bound. This completes the proof. 
\end{proof} 

\begin{proof}[Proof of \cref{cor:nonexceptional}]
	This corollary follows immediately from \cref{thm:cond} since our assumption implies $\langle \chi, \psi \rangle = 0$ for every $\psi \in \mathrm{Irr}(G)$ with $\psi^2 = \mathbf{1}_G$. 
\end{proof}

\begin{proof}[Proof of \cref{cor:irreducibles}]
	Applying \cref{thm:cond} to each term in the identity $\kappa(\chi) = \prod_{\psi} \kappa(\psi)^{\langle \chi, \psi \rangle}$ and noting $\mathbf{1}_G(1) = \mu(\mathbf{1}_G) = 1$, we find that 
		\begin{equation} \label{eqn:irred-upper} \small
		|\kappa(\chi)| \ll_{[k:\Q],|G|,\chi(1)}  (\log eD_k)^{(\widetilde{\mathbf{1}_G}(1) -1) \langle\chi,\mathbf{1}_G\rangle} \prod_{\psi \neq \mathbf{1}_G} \big[ (\log eD_k)^{\widetilde{\psi}(1)-\psi(1)} (\log q(\psi))^{\psi(1)} \big]^{\langle \chi, \psi\rangle}
		\end{equation}
		and
		\begin{equation}\label{eqn:irred-lower} \small 
		|\kappa(\chi)| \gg_{[k:\Q],|G|,\chi(1)} \epsilon(\chi)  (\log eD_k)^{(\mu(\widetilde{\mathbf{1}_G}) - 1)\langle \chi, \mathbf{1}_G \rangle} \prod_{\psi \neq \mathbf{1}_G} \big[ (\log eD_k)^{\mu(\widetilde{\psi})-\mu(\psi)}   (\log q(\psi))^{\mu(\psi)} \big]^{\langle \chi, \psi \rangle}.
		\end{equation}
		The corollary now follows from the observations that $\mu(\psi) \geq -\psi(1)$ and $\widetilde{\psi}(1) = [k:\Q] \psi(1)$ for all $\psi$, $\mu(\widetilde{\mathbf{1}_G}) \geq 0$, $\mu(\widetilde{\psi}) \geq [k:\Q] \mu(\psi)$ for all nontrivial $\psi$, and $\chi = \sum_{\psi} \langle \chi, \psi \rangle \psi$.
\end{proof}

\begin{remark} 
	The ``decomposed'' upper bound in \eqref{eqn:irred-upper} is uniformly better than the ``undecomposed'' upper bound in \cref{thm:cond} since $\chi \mapsto \chi(1)$ is a linear map and, for positive real numbers $x_i$ and positive integers $a_i$, we have  $ x_1^{a_1} \cdots x_N^{a_N} \ll_{a_1,\dots,a_N} (a_1x_1+\cdots + a_Nx_N)^{a_1+\cdots + a_N}$. In particular,   the upper bound on $\kappa(\chi)$ in \cref{cor:irreducibles} is uniformly better than its  upper bound in \cref{thm:cond}. 
\end{remark}

%
%
%
%
%
%

\section{Proof of Proposition 2.1} \label{sec:grh}
Assume GRH for $\zeta_K(s)$. For any conjugacy class $C$ of $G = \mathrm{Gal}(K/k)$,  a conditional version of the Chebotarev density theorem due to Lagarias and Odlyzko \cite{LO} implies that 
\[
\sum_{\N\kp \leq x} \mathbf{1}_C(\kp) = \frac{|C|}{|G|} \mathrm{Li}(x) + O_{[k:\Q],|G|}( x^{1/2} \log(D_K x) )
\]
for $x \geq (\log D_K)^2$. Therefore, for any irreducible character $\chi$ of $G$, it follows by orthogonality of characters that 
\begin{equation} \label{eqn:pnt-grh}
\sum_{\N\kp \leq x} \chi(\kp) = \langle \chi, \mathbf{1}_G \rangle \mathrm{Li}(x) + O_{[k:\Q],|G|,\chi(1)}( x^{1/2} \log( D_K x) )
\end{equation}
for $x \geq (\log D_K)^2$.  By applying partial summation to the trivial character, we may replace \cref{lem:mertens} with the conditional estimate
\begin{equation} \label{eqn:mertens-grh}
\sum_{y < \N\kp \leq x} \frac{1}{\N\kp} = \log\log x - \log\log y + O_{[k:\Q]}(1) 
\end{equation}
for $x \geq y \geq \log D_k$. Note the primes between $\log D_k$ and $(\log D_k)^2$ are discarded trivially.

Now, following the same arguments as \cref{lem:subproducts} with \eqref{eqn:pnt-grh}, we deduce that
\[
\prod_{y < \N\kp \leq x} L_{\kp}(1,\chi) = \exp\Big( \int_y^x \frac{\langle \chi, \mathbf{1}_G \rangle}{t \log t} dt \Big) \Big\{ 1 + O\big( y^{-1/2} \log(D_K y) \big) \Big\}^{\chi(1)}
\]
for $x > y > (\log D_K)^2$. Continuing with the arguments in the proof of \cref{thm:short-euler} (appearing at the end of \S\ref{sec:short-euler}), we similarly deduce that
\[
\kappa(\chi) = \frac{1}{(e^{\gamma} \log T)^{\langle \chi, \mathbf{1}_G \rangle}} \Big( \prod_{\N\kp \leq T} L_{\kp}(1,\chi) \Big) \Big\{ 1 + O( T^{-1/2} \log (D_KT) ) \Big\}^{\chi(1)}
\]
for $T \geq (\log D_K)^2$. By applying this estimate to faithful characters and their corresponding subextensions (as we did in \cref{prop:residue}), we find that
\[
\kappa(\chi) \asymp_{[k:\Q],|G|,\chi(1)} \frac{1}{(\log T(\mathbf{1}_G))^{\langle \chi, \mathbf{1}_G\rangle}} \prod_{\psi}   \prod_{\N\kp \leq T(\psi)} L_{\kp}(1,\psi)^{\langle \chi, \psi \rangle}, 
\]
where $T(\psi) \geq \max\{ \log q(\psi) , e\}$ for every irreducible character $\psi \in \mathrm{Irr}(G)$. We will make the choice $T(\mathbf{1}_G) = \log(eD_k)$ and $T(\psi) = \log q(\chi)$ for every nontrivial $\psi$, yielding
\begin{align*}
\kappa(\chi) 
& \asymp_{[k:\Q],|G|,\chi(1)} \frac{1}{(\log \log eD_k)^{\langle \chi, \mathbf{1}_G \rangle} }  \prod_{\N\kp \leq \log e D_k} |L_{\kp}(1,\mathbf{1}_G)|^{\langle \chi, \mathbf{1}_G\rangle}   \prod_{\psi \neq \mathbf{1}_G} \prod_{\N\kp \leq \log q(\chi) } |L_{\kp}(1,\psi)|^{\langle \chi, \psi\rangle}. 
\end{align*}
To estimate the remaining product, observe that \cref{prop:small-primes} holds with the weaker conditions $T_n \geq \log q(\psi_n)$ for $1 \leq n \leq N$ by replacing \cref{lem:mertens} with \eqref{eqn:mertens-grh}. Combining these applications exactly as we do in the proof of \cref{thm:cond} in \cref{sec:proofs-main}, it follows that 
\begin{align*}
\kappa(\chi) 
& \ll_{[k:\Q],|G|,\chi(1)} \frac{(\log\log e D_k)^{\widetilde{\chi}(1) - \chi(1) + \langle \chi, \mathbf{1}_G \rangle}}{(\log \log eD_k)^{\langle \chi, \mathbf{1}_G \rangle} }   (\log\log q(\chi))^{\chi(1)-\langle \chi,\mathbf{1}_G \rangle} 
\end{align*}
and similarly for the lower bound. This completes the proof. \qed 

\bibliographystyle{alpha}
\bibliography{references}	

\end{document}